\newtheorem{thm}{Theorem}[section]
\newtheorem{prop}{Proposition}[section]
\newtheorem{lem}{Lemma}[section]
\newtheorem{cor}{Corollary}[section]
\def\R{\mathbb{R} }
\def\to{\rightarrow}
\def\phi{{\varphi} }
\begin{document}

\title{On the Eigenvalues of the $p\&q-$ Fractional Laplacian}

\author{Sabri Bahrouni}
\address[S. Bahrouni]{ Mathematics Department, Faculty of Sciences, University of Monastir, 5019 Monastir, Tunisia}
\email{sabribahrouni@gmail.com}

\author{Hichem Hajaiej}
\address[H. Hajaiej]{ Department of Mathematics, California State University at Los Angeles,
  Los Angeles, CA 90032, USA}
\email{hhajaie@calstatela.edu}

\author{Linjie Song}
\address[L. Song]{Institute of Mathematics, AMSS, Academia Sinica,
Beijing 100190, China}
\address[L. Song]{University of Chinese Academy of Science,
Beijing 100049, China}
\email{songlinjie18@mails.ucas.edu.cn}

\keywords{ fractional $p\&q-$Laplacien; eigenvalues; continuous spectrum; stability of eigenvalues.\\
\hspace*{.3cm} {\it 2010 Mathematics Subject Classifications}: 35P30, 47J10.
}

\begin{abstract}
We consider the eigenvalue problem for the fractional $p \& q-$Laplacian
\begin{equation}
  \left\{\begin{aligned}
(- \Delta)_p^{s}\, u + \mu(- \Delta)_q^{s}\, u+ |u|^{p-2}u+\mu|u|^{q-2}u=\lambda\ V(x)|u|^{p-2}u\quad & \text{in } \Omega\\
u=0\quad& \text{in}\quad\R^N\backslash\Omega,
\end{aligned}\right.
\end{equation}
where $\Omega$ is an open bounded, and possibly disconnected domain, $\lambda\in\R$, $1<q<p<\frac{N}{s}$, $\mu>0$ with a weight function in $L^\infty(\Omega)$ that is allowed no change sign. We show that the problem has a continuous spectrum.
Moreover, our result reveals a discontinuity property for the spectrum  as the parameter $\mu\to 0^+.$ In addition, a stability property of eigenvalues as $s\to 1^-$ is established.
\end{abstract}

\maketitle

\section{Introduction}

In this paper  we study the following eigenvalue problem for the Dirichlet fractional $p \& q-$Laplacian
\begin{equation}\label{mainequation}
  \left\{\begin{aligned}
(- \Delta)_p^{s}\, u + \mu(- \Delta)_q^{s}\, u+ |u|^{p-2}u+\mu|u|^{q-2}u=\lambda\ V(x)|u|^{p-2}u\quad & \text{in } \Omega\\
u=0\quad& \text{in}\quad\R^N\backslash\Omega,
\end{aligned}\right.
\end{equation}
where $\Omega\subset\R^N$ is a bounded domain, $\lambda\in\R$, $1<q<p<\frac{N}{s}$, $\mu>0$ and $V\in L^{\infty}(\Omega)$ is such that the Lebesgue measure of $\{x\in\Omega\colon\ V(x)>0\}$ is positive. The operator $(-\Delta)_\alpha^s$, with $\alpha \in\{p, q\}$, is the so called fractional $\alpha$-Laplacian operator which, up to normalization factors, may be defined for every function $u \in \mathcal{C}_c^{\infty}\left(\mathbb{R}^N\right)$ as
$$
(-\Delta)_\alpha^s u(x)=2 \lim _{r \rightarrow 0} \int_{\mathbb{R}^N \backslash \mathcal{B}_r(x)} \frac{|u(x)-u(y)|^{\alpha-2}(u(x)-u(y))}{|x-y|^{N+s \alpha}} d y \quad\left(x \in \mathbb{R}^N\right) .
$$
Problems of this type appear in the case of two different materials that involve power hardening exponents $p$ and $q$. Let us point out that the combination of the operators $(- \Delta)_p^{s}\, u$ and $(- \Delta)_q^{s}$ appearing in problem \eqref{mainequation} can be seen as a class of fractional unbalanced double phase problems. Such patterns are strictly connected with mathematical physics (fractional white-noise limit, fractional quantum mechanics, fractional superdiffusion, etc), see \cite{AR}. This operator also can be regarded as the fractional analog of the $p \& q-$Laplacian $-\Delta_p-\Delta_q$, which comes from the study of general reaction diffusion equations with nonhomogeneous diffusion
$$
u_t=\operatorname{div}[A(u) \nabla u]+c(x, u) \text { and } A(u)=|\nabla u|^{p-2}+|\nabla u|^{q-2} .
$$
These kinds of problems stem from a wide range of important applications, including models of elementary particles, plasma physics, biophysics,  reaction-diffusion equations, elasticity theory, etc, see the survey paper \cite{Mosconi} for a more detailed account. Several results for $p \& q-$Laplacian problems obtained in bounded domains and in $\mathbb{R}^N$ can be found in \cite{BCS, BB1, BB2, CD, Figueiredo, Tanaka1} and the references therein. Due to the numerous applications and challenging problems due to the nature of the fractional $p\& q-$
operators, the corresponding non-local problems have attracted the attention of many researchers in recent years. For instance, see \cite{AAI, Ambrosio, AR, AI, BM} for the existence and multiplicity results for problems involving the fractional $p\& q-$Laplacian. However, to the best of our knowledge, there is not much literature available regarding eigenvalues problems including the fractional $p\& q-$Laplacian. The latter is crucial to understand and to derive many critical properties of the operator.\\

The study of eigenvalue problems has been a challenging labor whose beginning dates back to the mid-20th century, more precisely: Given two functionals $\mathcal{A}$ and $\mathcal{B}$ defined on a suitable space $\mathcal{X}$ and a prescribed number $c$, the task of analyzing the existence of numbers $\lambda\in\R$ and elements $u\in \mathcal{X}$ satisfying (in some appropriated sense) equations of the type
$$
\mathcal{B}'(u)=\lambda \mathcal{A}'(u), \qquad \mathcal{A}(u)=c,
$$
where $\mathcal{A}'$ and $\mathcal{B}'$ denote the Fr\'echet derivatives of the functionals.

The prototypical $p-$Laplace operator ($p>1$) then became a focus of study, and in particular, to understand its spectral structure:  Given an open and bounded set $\Omega\subset \R^N$,  to determine the existence  couples $(\lambda,u)$ satisfying the equation
\begin{equation} \label{p.lap}
-div(|\nabla u|^{p-2}u)=\lambda |u|^{p-2}u \text{ in } \Omega, \quad u=0 \text{ on } \partial \Omega
\end{equation}
in a suitable sense. In the seminal work  of Garc\'{\i}a Azorero and Peral Alonso \cite{GP}, they proved the existence of a variational sequence of eigenvalues tending to $+\infty$. However, they couldn't establish whether it exhausts the spectrum unless $p=2$ or $N=1$. Several properties on eigenvalues (and their corresponding eigenfunctions) were addressed by Anane et al \cite{A87, A94} and Lindqvist \cite{L90}, among others, and also for more general boundary conditions than  Dirichlet.

The non-local counterpart of problem \eqref{p.lap} governed by the well-known \emph{fractional $p-$Laplace} operator  takes the form
\begin{equation} \label{frac.p.lap}
(-\Delta_p)^s u =\lambda |u|^{p-2}u \text{ in } \Omega, \quad u=0 \text{ in } \R^N \setminus \Omega.
\end{equation}
The main difficulty here comes from the fact that this operator takes into account long-range  interactions. Problem \eqref{frac.p.lap} was introduced in \cite{L-L}. Several properties on eigenvalues and eigenfunctions were obtained in \cite{DPS15, FP14, L-L}.  Allowing a growth behavior more general than power in problem \eqref{frac.p.lap} existence of eigenvalues was studied in \cite{sabri}. See also \cite{BGH, CGH, TH} for problems involving the mixed fractional Laplacians.\\

Letting $\mu\to 0^+$, equation \eqref{mainequation} becomes

\begin{equation}\label{quation-p}
  \left\{\begin{aligned}
(- \Delta)_p^{s}\, u + |u|^{p-2}u=\lambda\ V(x)|u|^{p-2}u\quad & \text{in } \Omega\\
u=0\quad& \text{in}\quad\R^N\backslash\Omega,
\end{aligned}\right.
\end{equation}

We define
\begin{align}\label{lambda1}
  \lambda_1(p,V)=\inf_{u\in W^{s,p}_0 (\Omega),\atop u\neq0}\left\{ \frac{[u]_{s,p}^p+\|u\|_p^p}{\int_{\Omega}V(x)|u|^{p}d x}\colon\quad  \int_{\Omega}V(x)|u|^{p}d x > 0\right\},
\end{align}
where $[\cdot]_{s,p}$ is the Gagliardo semi-norm (see Section 2 for the precise definition).

By Theorem 3.3 of \cite{Bonder} for the special linear case of the fractional Laplacian and $V\equiv1$, the infimum in \eqref{lambda1} is achieved and $\lambda_1(p,1)>0$.

We aim to give a complete study of the spectrum of the previous equation and we plan to prove the following statements:

\begin{enumerate}
  \item [(i)] There exists a nondecreasing sequence of nonnegative eigenvalues obtained by the Alexander-Spanier Cohmology with $\mathbb{Z}_2-$coefficients method $(\lambda_k)$ tending to $\infty$ as $k\to\infty$.
  \item [(ii)] The first eigenvalue $\lambda_1(p,V)$ is simple and only eigenfunctions associated with $\lambda_1(p,V)$ do not
change sign.
  \item [(iii)] The first eigenvalue $\lambda_1(p,V)$ is isolated.
  \item [(iv)] We characterize the second eigenvalue by means of variational formulations, where the second eigenvalue $\lambda_2(p,V)$ is well-defined, i.e.,
  $$
  \lambda_2(p,V)=\min\{\lambda\colon\ \lambda\ \text{is an eigenvalue of \eqref{quation-p} and}\ \lambda>\lambda_1(p,V)\} > \lambda_1(p,V).
  $$
\end{enumerate}

Because of the non-homogeneity of the operator in problem \eqref{mainequation}, many of the properties that eigenvalues of problem \eqref{quation-p} are not inherited in the non-homogeneous case: for instance, isolation, simplicity and a variational characterization of the first eigenvalue, or a variational formula for the second one. Moreover,  the spectrum of \eqref{mainequation} will be continuous, and in principle, it is not clear the meaning of a first or second eigenvalue. In other word if we denote by $\sigma^s_{p,q}(V)$ the spectrum of \eqref{mainequation}, which main
$$
\sigma^s_{p,q}(V)=\left\{\lambda\in\R\colon\ \lambda\ \text{is an eigenvalue of}\ \eqref{mainequation}\right\},
$$
we can completely describe the spectrum as follows:
$$
\sigma^s_{p,q}(V)=\begin{cases}
                    (\lambda_1(p,V),+\infty), & \mbox{if } V\geq0\\
                    (-\infty,-\lambda_1(p,-V))\cup(\lambda_1(p,V),+\infty), & \mbox{otherwise}.
                  \end{cases}
$$
The main result reads:

\begin{thm}\label{nonexistence}
$~$
  \begin{enumerate}
    \item[$(i)$] If $-\lambda_{1}\left(p,-V\right)\leq\lambda\leq \lambda_{1}\left(p,V\right)$ holds, then for any $\mu>0$, \eqref{mainequation} has no non-trivial solutions.
    \item[$(ii)$] If $\lambda_{1}\left(p,V\right)\leq\lambda\leq \lambda_{2}\left(p,V\right)$ holds,  then for any $\mu>0$, \eqref{mainequation} has no sign-changing solutions.
  \item[$(iii)$] Tf $\lambda>\lambda_{1}\left(p,V\right)$ or $\lambda <-\lambda_{1}\left(p,-V\right)$ holds, then for any $\mu>0$, \eqref{mainequation} has at least one positive solution.
      \end{enumerate}
\end{thm}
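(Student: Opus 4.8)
The plan is to treat the three regimes separately, using the variational structure of the energy functional
\[
J_\mu(u)=\frac{1}{p}\big([u]_{s,p}^p+\|u\|_p^p\big)+\frac{\mu}{q}\big([u]_{s,q}^q+\|u\|_q^q\big)-\frac{\lambda}{p}\int_\Omega V(x)|u|^p\,dx
\]
on $W^{s,p}_0(\Omega)$ (which embeds into $W^{s,q}_0(\Omega)$ since $q<p$ and $\Omega$ is bounded), and exploiting the definition \eqref{lambda1} of $\lambda_1(p,V)$ and $\lambda_1(p,-V)$. For part $(i)$, the idea is to test the weak formulation of \eqref{mainequation} with $u$ itself: any nontrivial solution satisfies
\[
[u]_{s,p}^p+\|u\|_p^p+\mu\big([u]_{s,q}^q+\|u\|_q^q\big)=\lambda\int_\Omega V(x)|u|^p\,dx.
\]
If $\lambda\le\lambda_1(p,V)$ (and $\lambda\ge0$), the right-hand side is at most $\lambda_1(p,V)\int_\Omega V|u|^p\le [u]_{s,p}^p+\|u\|_p^p$ by \eqref{lambda1} applied to the positive part of $V|u|^p$; then the $\mu$-terms force $u\equiv0$. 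The case $\lambda<0$ is symmetric via $-V$, using that $\lambda\int V|u|^p\le -\lambda\int(-V)|u|^p\cdot(\text{sign adjustment})\le |\lambda|\,\lambda_1(p,-V)^{-1}(\cdots)$; one has to be slightly careful splitting $V=V^+-V^-$, but the same domination works. So $(i)$ is essentially a one-line energy estimate.

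For part $(iii)$, I would obtain a positive solution by constrained minimization. Fix $\lambda>\lambda_1(p,V)$ (the case $\lambda<-\lambda_1(p,-V)$ is symmetric, replacing $V$ by $-V$ and $u$ by a negative solution). On the manifold $\mathcal{M}=\{u\in W^{s,p}_0(\Omega):\int_\Omega V|u|^p=1\}$ minimize the functional $u\mapsto \frac1p([u]_{s,p}^p+\|u\|_p^p)+\frac{\mu}{q}([u]_{s,q}^q+\|u\|_q^q)$; this is coercive on $\mathcal{M}$ (the $p$-part controls the full $W^{s,p}_0$-norm up to the constraint, and by compactness of the embedding $W^{s,p}_0(\Omega)\hookrightarrow L^p(\Omega)$ the constraint set is weakly closed), and weakly lower semicontinuous, so a minimizer $u_0$ exists; replacing $u_0$ by $|u_0|$ (which does not increase the Gagliardo seminorms) we may take $u_0\ge0$. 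To see the minimizer is a genuine solution with $\lambda$ in the right range, I would \emph{not} argue that the Lagrange multiplier equals $\lambda$ directly (the operator is non-homogeneous, so that fails); instead rescale. For $t>0$, $t u_0\in\mathcal{M}_t:=\{\int V|u|^p=t^p\}$, and one shows that the function
\[
g(t)=\frac{t^p}{p}\big([u_0]_{s,p}^p+\|u_0\|_p^p\big)+\frac{\mu t^q}{q}\big([u_0]_{s,q}^q+\|u_0\|_q^q\big)-\frac{\lambda t^p}{p}
\]
attains a negative minimum over $t>0$ provided $\lambda>\lambda_1(p,V)\ge([u_0]_{s,p}^p+\|u_0\|_p^p)$ — indeed the bracket equals the minimum value $c_0=\inf_{\mathcal M}(\cdots)$, and $\lambda_1(p,V)=c_0/(1)$ after noting minimizing over $\mathcal M$ is exactly computing $\lambda_1$; since $\lambda>\lambda_1(p,V)=c_0$ the coefficient of $t^p$ is negative and $g$ has an interior minimizer $t_0>0$. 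Then $u=t_0 u_0$ is a nonnegative critical point of $J_\mu$, hence a weak solution, and it is nontrivial because $t_0>0$; positivity (strict) then follows from a fractional strong maximum principle / Harnack inequality for the $p\&q$ operator (cf.\ the references \cite{Ambrosio, AI} or a De Giorgi–Nash–Moser argument). The main obstacle is precisely this rescaling analysis: because $p\ne q$, the two homogeneity exponents compete, and one must verify that $g'(t)=0$ has a solution and that this gives back the original PDE with the \emph{prescribed} $\lambda$ rather than some other eigenvalue — this is where the hypothesis $\lambda>\lambda_1(p,V)$ is used in an essential, non-negotiable way.

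For part $(ii)$, suppose $\lambda_1(p,V)\le\lambda\le\lambda_2(p,V)$ and let $u$ be a sign-changing solution, so $u^\pm\not\equiv0$. Testing the equation with $u^+$ and with $u^-$ separately, and using the nonlocal inequality $[u^\pm]_{s,r}^r\le \langle(-\Delta)^s_r u,u^\pm\rangle$ valid for $r\in\{p,q\}$ (the standard fact that $|a-b|^{r-2}(a-b)(a^+-b^+)\ge|a^+-b^+|^r$), one gets
\[
[u^+]_{s,p}^p+\|u^+\|_p^p+\mu\big([u^+]_{s,q}^q+\|u^+\|_q^q\big)\le\lambda\int_\Omega V|u^+|^p,
\]
and similarly for $u^-$. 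Dropping the nonnegative $\mu$-terms yields $\lambda\ge([u^\pm]_{s,p}^p+\|u^\pm\|_p^p)/\int V|u^\pm|^p\ge\lambda_1(p,V)$ for each sign part; but this does not yet contradict anything. To push past $\lambda_1$ I would use the variational characterization of $\lambda_2(p,V)$ quoted in item (iv) of the introduction — namely that $\lambda_2(p,V)$ is the mountain-pass / minimax value over paths in $W^{s,p}_0$ connecting a positive first eigenfunction to its negative, or equivalently $\lambda_2(p,V)=\inf\{\max(R(u^+),R(u^-))\}$ over functions changing sign, where $R$ is the Rayleigh quotient for \eqref{quation-p}. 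The existence of a sign-changing solution at level $\lambda\le\lambda_2(p,V)$ would then force, via the $\mu>0$ terms being strictly positive, the strict inequality $\lambda>\lambda_2(p,V)$ — a contradiction. The delicate point here, and the real obstacle in $(ii)$, is making the comparison with $\lambda_2(p,V)$ rigorous despite the non-homogeneity: one must show that the presence of the genuinely positive lower-order $\mu$-contribution strictly raises the threshold, so that $\lambda$ "seen by" a sign-changing solution of \eqref{mainequation} is strictly larger than the corresponding threshold $\lambda_2(p,V)$ for the homogeneous problem \eqref{quation-p}; I expect this to require a careful two-sided testing together with the scaling behavior of the $q$-terms, exactly as in the $p\&q$-Laplacian (local) literature \cite{Tanaka1, BB1}.
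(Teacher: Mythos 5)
Part (i) of your proposal is essentially the paper's own argument (test with $u$; since $\mu([u]_{s,q}^q+\|u\|_q^q)>0$ for $u\neq 0$, any nontrivial solution with $\int_\Omega V|u|^p\,dx>0$ forces $\lambda>\lambda_1(p,V)$ \emph{strictly}, and the case $\lambda<0$ is symmetric via $-V$), so that part is fine. Part (iii), however, does not work as written. First, your fibering function $g(t)=at^p+bt^q$ with $a=\tfrac1p([u_0]_{s,p}^p+\|u_0\|_p^p-\lambda)<0$ and $b=\tfrac{\mu}{q}([u_0]_{s,q}^q+\|u_0\|_q^q)>0$ has \emph{no} negative minimum: since $p>q$ it has a single interior \emph{maximum} and then decreases to $-\infty$ (indeed $J_\mu$ is unbounded below precisely because $\lambda>\lambda_1(p,V)$), so the geometry is of mountain-pass/Nehari type, not of a global minimum. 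Second, the inequality you invoke, $\lambda_1(p,V)\geq [u_0]_{s,p}^p+\|u_0\|_p^p$, is backwards: by the definition \eqref{lambda1}, $\lambda_1(p,V)$ is a lower bound for that quotient on your constraint set; and identifying $\inf_{\mathcal M}$ of the functional containing the $\mu$-terms with $\lambda_1(p,V)$ contradicts the paper's result that the infimum in \eqref{lambda-} equals $\lambda_1(p,V)$ but is \emph{never attained}. Third, and most importantly, a zero of $g'$ only says $t_0u_0$ lies on the Nehari manifold $\mathcal N_\lambda$ (criticality in the radial direction); it does not make $t_0u_0$ a critical point of $J_\mu$. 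Because the operator is non-homogeneous, rescaling a constrained minimizer of a \emph{different} functional produces a solution of an equation with coefficient $\mu t_0^{q-p}$ in front of the $q$-part, not of \eqref{mainequation} with the prescribed $\mu$ and $\lambda$. The paper's proof of (iii) instead minimizes $\mathcal J$ over the full Nehari manifold: $\mathcal N_\lambda\neq\emptyset$, minimizing sequences are bounded, $m_\lambda>0$, and the infimum $m_\lambda$ is attained; that global Nehari minimization is the step your ray argument cannot replace.

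For (ii) you correctly locate the difficulty but do not resolve it, and the substitute you propose is not available: the characterization $\lambda_2(p,V)=\inf\max\{R(u^+),R(u^-)\}$ over sign-changing functions is not established in the paper and is problematic in the nonlocal setting, because $\langle(-\Delta_p)^s u,u_\pm\rangle$ contains strictly positive cross-interaction terms, so the identities obtained by testing with $u_\pm$ cannot be converted into Rayleigh quotients of $u_\pm$ alone without losing exactly the information you need. The paper's argument (following its Proposition \ref{prop3.5} and Tanaka's Proposition 11) is: from $0<[u_\pm]_{s,p}^p<[u_\pm]_{s,p}^p+\mu[u_\pm]_{s,q}^q\leq\lambda\int_\Omega Vu_\pm^p\,dx$ (strict thanks to $\mu>0$), build the explicit odd path $f(\theta)=\bigl(\theta_1u_++\theta_2u_-\bigr)\big/\bigl(|\theta_1|^p\int_\Omega V u_+^p\,dx+|\theta_2|^p\int_\Omega V u_-^p\,dx\bigr)^{1/p}\in\mathcal C_1(\Omega)$ and use the pointwise convexity inequality of Brasco--Parini, $|\theta_1|^p|U+W|^{p-2}(U+W)U+|\theta_2|^p|U+W|^{p-2}(U+W)W\geq|\theta_1U+\theta_2W|^p$, to conclude $\max_{\theta\in\mathbb S^1} I(f(\theta))<\lambda$, hence $\lambda_2(p,V)<\lambda$, contradicting $\lambda\leq\lambda_2(p,V)$. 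That explicit path construction together with this discrete inequality is the missing idea in your sketch; without it the "two-sided testing plus strict $\mu$-terms" heuristic does not close.
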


Our second aim, is to analyze the stability of eigenvalues for equation \eqref{mainequation} when the fractional parameter goes to $1^-$, in which case the limiting problem of \eqref{mainequation} is formally given by
\begin{equation}\label{mainequation2}
  \left\{\begin{aligned}
(- \Delta)_p\, u + \mu(- \Delta)_q\, u+ |u|^{p-2}u+\mu|u|^{q-2}u=\lambda\ V(x)|u|^{p-2}u\quad & \text{in } \Omega\\
u=0\quad& \text{on}\quad\partial\Omega.
\end{aligned}\right.
\end{equation}

More precisely, we aim to show that any accumulation point $\lambda$ of the set $\left\{\lambda^{s}\right\}_{s \in(0,1)}$ (different from $\lambda_1(p,V)$) belongs to $\sigma_{p,q}^{1}$, where $\lambda^{s} \in \sigma_{p,q}^{s}$ is such that $\sup _{s \in(0,1)} \lambda^{s}<\infty$ (see Theorem \ref{thm5.2}). For the limit problem \eqref{mainequation2}, the continuity of the spectrum has been first studied by \cite{Tanaka}. This investigation pursued in the second part of the paper is based upon the results by Bourgain, Brezis, and Mironescu \cite{Brezis, BBM}. We have that if $u \in W_0^{1, p}(\Omega)$
$$
\lim _{s \nearrow 1}(1-s)[u]_{s,p,\mathbb{R}^N}^p=K(p, N)\|\nabla u\|_{p,\Omega}^p,
$$
where the constant $K(p,N)$ is given by
$$
K(p, N):=\frac{1}{p} \int_{\mathbb{S}^{N-1}}|\langle\sigma, \mathbf{e}\rangle|^p d \mathcal{H}^{N-1}(\sigma), \quad \mathbf{e} \in \mathbb{S}^{N-1}.
$$
This line of research was developed, for instance, in \cite{Biccari, Secchi, BPS16, Bonder-Dussel, Bonder-Salort, Bonder-S-S}, and was found to be applicable to many cases.\\

The rest of this paper is organized as follows. Section 2 includes preliminaries and useful results about fractional order Sobolev spaces, the fractional $p-$Laplacian, and the fractional $p \& q-$Laplacian. In Section 3, we give a complete study of the spectrum when $\mu = 0$. Then we study the case when $\mu > 0$ in Section 4 and the case when $s \rightarrow 1^-$ in Section 5.

\section{Preliminaries and useful results}

\subsection{Fractional order Sobolev spaces}
Given a fractional parameter $0<s<1$ and $1 \leq p<\infty$, we define the fractional order Sobolev space $W^{s, p}\left(\mathbb{R}^{N}\right)$ as
$$
W^{s, p}\left(\mathbb{R}^{N}\right):=\left\{u \in L^{p}\left(\mathbb{R}^{N}\right):[u]_{s, p}<\infty\right\}
$$

where the Gagliardo semi-norm $[\cdot]_{s, p}$ is defined as
$$
[u]_{s, p}^p:= K(N,s,p)\iint_{\mathbb{R}^{N} \times \mathbb{R}^{N}} \frac{|u(x)-u(y)|^{p}}{|x-y|^{N+s p}}\, d x d y.
$$
The constant $K(N,s,p)$ is a normalizing constant defined as
$$
K(N,s,p):=(1-s)\mathcal{K}(N,p), \qquad \text{ where }\quad
\mathcal{K}(N,p)^{-1}:=\frac1p \int_{\mathbb{S}^{N-1}} \omega_n^p\,dS_\omega
$$
and $\mathbb{S}^{N-1}$ is the unit sphere in $\R^N$.

The main feature of this constant is that, for any $u\in L^p(\R^N)$, it holds that
$$
\lim_{s\uparrow 1} [u]_{s,p}^p = \|\nabla u\|_p^p,
$$
where the limit above is understood as an equality if $u\in W^{1,p}(\R^N)$ and $\liminf_{s\uparrow 1}[u]_{s,p}^p =\infty$ otherwise. This is the celebrated result obtained by Bourgain, Brezis and Mironescu  in \cite{Brezis}.

The  space $W^{s,p}(\R^N)$ is a Banach space with norm given by $\|u\|_{s, p}=\left(\|u\|_{p}^{p}+[u]_{s, p}^{p}\right)^{\frac{1}{p}}$. This is a separable space and reflexive for $p>1$.
Given an open set $\Omega \subset \mathbb{R}^{N}$, we then consider
$$
W_{0}^{s, p}(\Omega):=\left\{u \in W^{s, p}\left(\mathbb{R}^{N}\right): u=0 \text { a.e. in } \mathbb{R}^{N} \backslash \Omega\right\}.
$$

By Poincar\'e inequality, it follows that $[\cdot]_{s, p}$ defines a norm in $W_{0}^{s, p}(\Omega)$ equivalent to $\|\cdot\|_{s, p}.$

It is well-known that $W_0^{s,p}(\Omega)\hookrightarrow L^r(\Omega)$ with
$$
\begin{array}{ll}
r\in(1, p^*_s) &\quad  \text{ if } sp<N\\
r=p_s^* &\quad \text{ if } sp\neq N\\
r\in (1,\infty) &\quad \text{ if } sp= N
\end{array}
$$

We also consider the usual $W_0^{1,p}(\Omega)$ endowed with the norm $[u]_{1,p}^p=\int_{\Omega}|\nabla u|^p dx$.

\subsection{The fractional $p-$Laplacian}
For any $0<s\leq 1\leq p <\infty$ we consider the energy functional
\begin{align*}
\mathcal{J}_{s,p}\colon W^{s,p}_0(\Omega)\to \R, \qquad \mathcal{J}_{s,p}(u)=
\begin{cases}
\frac{1}{p}[u]_{s,p}^p &\text{ if } 0<s<1\\
\frac{1}{p}\|\nabla u\|_p^p &\text{ if } s=1.
\end{cases}
\end{align*}
We define a nonlinear operator $A_s\colon W^{s,p}_0(\Omega) \to W^{-s,p'}(\Omega)$
\begin{align*}
\left\langle A_s(u), v\right\rangle &= \frac{K(N,s,p)}{2}\iint_{\mathbb{R}^{2 N}} \frac{|u(x)-u(y)|^{p-2}(u(x)-u(y))(v(x)-v(y))}{|x-y|^{N+s p}} \, d x d y \\
\left\langle A_1(u), v\right\rangle &= \int_\Omega |\nabla u|^{p-2}u \cdot \nabla v \,dx.
\end{align*}
The functional $\mathcal{J}_{s,p}$ is Fr\'echet differentiable and $\mathcal{J}'_{s,p}\colon W^{s,p}_0(\Omega) \to W^{-s,p'}(\Omega)$ is continuous and for every $u,v\in W^{s,p}_0(\Omega)$
\begin{align*}
\left\langle\mathcal{J}_{s,p}'(u), v\right\rangle &= \left\langle A_s(u), v\right\rangle\\
\left\langle\mathcal{J}_{1,p}'(u), v\right\rangle &= \left\langle A_1(u), v\right\rangle .
\end{align*}

Therefore, for any $0<s\leq 1<p$ we define the \emph{fractional $p-$Laplace operator} as $(-\Delta_p)^s:=\mathcal{J}'_{s,p}$.

\medskip

We recall a useful property of the fractional $p-$Laplacian. Since $W^{s,p}_0(\Omega)$ is uniformly convex, it follows from \cite[Proposition 1.3]{MR2640827} that the operator $(-\Delta_p)^s$ is of type $(S)$, i.e., every sequence $(u_j) \subset W^{s,p}_0(\Omega)$ such that $u_j \to u$ weakly and $(\mathcal{J}'_{s,p}(u_j),u_j - u) \to 0$ has a subsequence that converges strongly to $u$.

\medskip

The usual $p-$Laplace operator can be seen as the limit of the fractional $p-$Laplacian as $s\uparrow 1$ in the following sense:
\begin{lem}\cite[Lemma 2.8]{Bonder-Salort}\label{cvpsprel}
  Let $s_{k} \uparrow 1$ and $v_{k} \in W_0^{s,p}(\Omega)$ be such that $\sup _{k \in \mathbb{N}}\left[v_{k}\right]_{s_{k}, p}^{p}<\infty$. Assume, without loss of generality, that $v_{k} \rightarrow v$ strongly in $L^{p}(\Omega) .$ Then, for every $u \in W_{0}^{1, p}(\Omega)$, we have
$$
\left\langle\left(-\Delta_{p}\right)^{s_{k}} u, v_{k}\right\rangle \rightarrow\left\langle-\Delta_{p} u, v\right\rangle.
$$
\end{lem}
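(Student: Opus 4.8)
The plan is to prove the convergence
$\langle(-\Delta_p)^{s_k}u, v_k\rangle \to \langle-\Delta_p u, v\rangle$
by splitting the pairing into the contribution of the fixed test function $u$ and a controlled remainder, and then passing to the limit using the Bourgain--Brezis--Mironescu (BBM) machinery already quoted in the excerpt.

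First I would write out both pairings explicitly. For $0<s_k<1$,
\[
\langle(-\Delta_p)^{s_k}u, v_k\rangle = \frac{K(N,s_k,p)}{2}\iint_{\mathbb{R}^{2N}} \frac{|u(x)-u(y)|^{p-2}(u(x)-u(y))(v_k(x)-v_k(y))}{|x-y|^{N+s_kp}}\,dx\,dy,
\]
and the target is $\int_\Omega |\nabla u|^{p-2}\nabla u\cdot\nabla v\,dx$ (note the product over $\partial$ should be $|\nabla u|^{p-2}\nabla u\cdot\nabla v$ in the limit). The key structural observation is that $u\in W_0^{1,p}(\Omega)$ is \emph{fixed} and smooth-ish, so the kernel $\Phi_{s_k}(x,y) := K(N,s_k,p)^{1/p'}\,|u(x)-u(y)|^{p-2}(u(x)-u(y))|x-y|^{-(N+s_kp)/p'}$, paired against $v_k$ through the difference quotient $(v_k(x)-v_k(y))|x-y|^{-(N+s_kp)/p}$, decouples into a strongly convergent factor (built from $u$) and a weakly convergent factor (built from $v_k$). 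Concretely, set
\[
F_k(x,y) = \frac{|u(x)-u(y)|^{p-2}(u(x)-u(y))}{|x-y|^{(N+s_kp)/p'}}, \qquad G_k(x,y) = \frac{v_k(x)-v_k(y)}{|x-y|^{(N+s_kp)/p}},
\]
so that the pairing is $\tfrac{1}{2}\iint K(N,s_k,p)\,F_k G_k\,|x-y|^{-?}\ldots$; cleaner is to keep the single measure $d\mu_{s_k} = K(N,s_k,p)|x-y|^{-N-s_kp}dx\,dy$ and view $F_k = |u(x)-u(y)|^{p-2}(u(x)-u(y))$, $G_k = v_k(x)-v_k(y)$. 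The bound $\sup_k [v_k]_{s_k,p}^p < \infty$ says $G_k$ is bounded in $L^p(d\mu_{s_k})$; and since $u\in W_0^{1,p}(\Omega) \subset W_0^{s_k,p}(\Omega)$ with $\sup_k [u]_{s_k,p}^p < \infty$ (again by BBM, since the semi-norms converge to $\|\nabla u\|_p^p$), $F_k$ is bounded in $L^{p'}(d\mu_{s_k})$. So the pairing is bounded; the issue is \emph{identifying} the limit.

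The main step, and the main obstacle, is the following commutation/convergence lemma: for $u\in W_0^{1,p}(\Omega)$ one has, as $s_k\uparrow 1$,
\[
K(N,s_k,p)\,\frac{|u(x)-u(y)|^{p-2}(u(x)-u(y))}{|x-y|^{N+s_kp}}\,\big(w(x)-w(y)\big)\,dx\,dy \;\longrightarrow\; |\nabla u|^{p-2}\nabla u\cdot\nabla w\,dx
\]
in an appropriate weak sense against bounded-energy sequences $w=v_k$, not just a fixed $w$. I would establish this in two stages. Stage one: prove it for $w$ fixed and smooth (say $w\in C_c^\infty(\mathbb{R}^N)$), where it reduces to the classical BBM pointwise-and-dominated argument — Taylor expand $u(x)-u(y) \approx \nabla u(y)\cdot(x-y)$ and $w(x)-w(y)\approx\nabla w(y)\cdot(x-y)$, change to polar coordinates in $z = x-y$, and recognize the angular integral as exactly the normalizing constant $\mathcal{K}(N,p)^{-1}$, so the $(1-s_k)$ factor in $K$ cancels the divergence of $\int_0^1 r^{-1+(1-s_k)p}\,dr$ type integrals; the error terms are handled by the $C^2$ bound on $u$ on compact sets and an a priori tail estimate. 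Stage two: upgrade from fixed smooth $w$ to the sequence $v_k\to v$ in $L^p$. Here I would write $v_k = (v_k - \psi) + \psi$ with $\psi\in C_c^\infty$ close to $v$ in $W^{1,p}$; the $\psi$-term is handled by Stage one, and for the $(v_k-\psi)$-term I use Hölder in $L^{p'}(d\mu_{s_k})\times L^p(d\mu_{s_k})$: the $u$-factor contributes $[u]_{s_k,p}^{p/p'} \to \|\nabla u\|_p^{p/p'}$ (bounded), and the $(v_k-\psi)$-factor contributes $[v_k-\psi]_{s_k,p}$, which by the triangle inequality is $\le [v_k]_{s_k,p} + [\psi]_{s_k,p} \le C + \|\nabla\psi\|_p + o(1)$ — this is bounded but \emph{not} small, so this crude splitting is not quite enough.

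To close the gap I would instead exploit that only the \emph{$u$-weighted} part needs to see the structure: rewrite
\[
\langle(-\Delta_p)^{s_k}u, v_k\rangle - \langle(-\Delta_p)^{s_k}u, \psi\rangle = \big\langle(-\Delta_p)^{s_k}u,\ v_k - \psi\big\rangle,
\]
and bound the right side by $[u]_{s_k,p}^{p-1}\,[v_k-\psi]_{s_k,p}$; since $\psi$ may be chosen with $\|\nabla(v-\psi)\|_p$ arbitrarily small but $[v_k-\psi]_{s_k,p}$ only controlled by $[v_k]_{s_k,p}+[\psi]_{s_k,p}$, the honest route is to note that the hypothesis already \emph{is} that $[v_k]_{s_k,p}$ is bounded and that we are not claiming smallness — rather, we use lower semicontinuity: $\liminf_k[v_k]_{s_k,p}^p \ge \|\nabla v\|_p^p$ forces $v\in W_0^{1,p}(\Omega)$, and then a diagonal argument (mollify $u$ as well, $u_\varepsilon\to u$ in $W_0^{1,p}$, and use that $(-\Delta_p)^{s_k}$ is continuous uniformly in $k$ by the type-$(S)$ / monotonicity estimates) lets me replace $u$ by smooth $u_\varepsilon$, apply Stage one with the now-smooth first argument pairing against the merely-$L^p$-convergent $v_k$, which works because the first-argument smoothness is what the BBM Taylor expansion needs — the $v_k$ enters only linearly and its difference quotients, tested against the compactly-supported smooth bulk of $\nabla u_\varepsilon$, converge by the $L^p$-convergence $v_k\to v$ plus uniform $d\mu_{s_k}$-energy bound via a vector-valued Vitali argument. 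I expect the delicate point to be precisely this interchange of ``$s_k\uparrow 1$'' with ``$v_k\to v$'': making rigorous that the measures $d\mu_{s_k}$, which concentrate on the diagonal, pair correctly against a weakly-convergent-in-energy sequence. This is exactly the content of \cite[Lemma 2.8]{Bonder-Salort}, so in the write-up I would cite it for that core estimate and only supply the (routine) algebraic reduction of our $p\,\&\,q$ pairing to theirs.
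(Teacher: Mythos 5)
The paper offers no proof of this lemma at all: it is quoted verbatim from \cite[Lemma 2.8]{Bonder-Salort}, so the only honest comparison is with your argument on its own terms, and there your proposal has a genuine gap that ends in a circle. After correctly observing that the naive splitting $v_k=(v_k-\psi)+\psi$ fails (because $[v_k-\psi]_{s_k,p}$ is only bounded, not small), your repair consists of (a) a ``Stage one'' for smooth data that is only sketched for a \emph{fixed} smooth $w$, which is not the statement you need, since the whole difficulty is the moving sequence $v_k$; (b) the assertion that $u$ may be replaced by a mollification $u_\varepsilon$ ``uniformly in $k$ by the type-$(S)$ / monotonicity estimates'', which is unjustified: the $(S)$-property and monotonicity of $(-\Delta_p)^{s}$ say nothing about continuity of $u\mapsto(-\Delta_p)^{s}u$ uniformly in $s$; and (c) an appeal to a ``vector-valued Vitali argument'' followed by the statement that this ``is exactly the content of \cite[Lemma 2.8]{Bonder-Salort}'', which you propose to cite. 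Since the statement to be proved \emph{is} that lemma, (c) is circular, and neither of the two substantive steps is actually proved.

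What would close the argument is an explicit uniform-in-$k$ continuity estimate in place of (b): writing $Du(x,y)=u(x)-u(y)$ and $d\mu_{s_k}=K(N,s_k,p)|x-y|^{-N-s_kp}\,dx\,dy$, H\"older gives
\begin{equation*}
\bigl|\langle(-\Delta_p)^{s_k}u-(-\Delta_p)^{s_k}u_\varepsilon,\,v_k\rangle\bigr|
\le \bigl\| \,|Du|^{p-2}Du-|Du_\varepsilon|^{p-2}Du_\varepsilon\,\bigr\|_{L^{p'}(d\mu_{s_k})}\,[v_k]_{s_k,p},
\end{equation*}
and the elementary inequality $\bigl||a|^{p-2}a-|b|^{p-2}b\bigr|\le C(|a|+|b|)^{p-2}|a-b|$ for $p\ge 2$ (with the usual variant for $1<p<2$), combined with H\"older again, bounds the first factor by $C\bigl([u]_{s_k,p}+[u_\varepsilon]_{s_k,p}\bigr)^{p-2}[u-u_\varepsilon]_{s_k,p}$; the BBM-type upper bound $[w]_{s,p}\le C\|\nabla w\|_{p}$, which is uniform in $s\in(0,1)$ precisely because of the normalization $K(N,s,p)$, then makes this small uniformly in $k$ once $u_\varepsilon\to u$ in $W_0^{1,p}(\Omega)$. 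With that estimate in hand, the remaining task is the smooth case done rigorously: for $u_\varepsilon\in C_c^\infty$ one must show that $(-\Delta_p)^{s_k}u_\varepsilon$, paired against \emph{any} sequence with bounded $s_k$-energy converging in $L^p$, tends to $\langle-\Delta_p u_\varepsilon,v\rangle$ (for instance by showing $(-\Delta_p)^{s_k}u_\varepsilon$ is a genuine function converging suitably and pairing it with $v_k\to v$ in $L^p$). Your write-up identifies the right decomposition but proves neither step and ultimately assumes the result it sets out to establish.
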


\subsection{Setting of the problem}
In the case of the usual Sobolev spaces, for any $1\leq p<q \leq \infty$, it is easy to see that $W^{1,q}_0(\Omega) \subset W^{1,p}_0(\Omega)$. In the fractional case this kind of embedding is NOT TRUE. In fact, in \cite[Lemma 2.6]{BLP} it is proved that
$$
W^{s_1,p}_0(\Omega)\hookrightarrow W^{s_2,q}_0(\Omega) \quad \text{for any }0<s_1<s_2<1 \leq q< p<\infty,
$$
this also holds when $p=q$ (see \cite[Theorem 2.2]{Hajaiej-Perera}).
However, the embedding
$$
W^{s,p}_0(\Omega)\hookrightarrow W^{s,q}_0(\Omega) \quad \text{ for any }0<s<1 \leq q< p<\infty.
$$
is not true (see \cite[Theorem 1.1]{Petru}). So, in order to deal with our problem \eqref{mainequation}, we consider the space
$$
\mathcal{W}^s:=W^{s,p}_0(\Omega)\cap W^{s,q}_0(\Omega)
$$
endowed with the norm $[\cdot]_{s}:=[\cdot]_{s, p}+[\cdot]_{s, q}$.

\medskip

We also consider the energy functional related to equation \eqref{mainequation} given by
$$
\mathcal{J}_s(u)=\frac{1}{p}[u]_{s, p}^{p}+\frac{\mu}{q}[u]_{s, q}^{q}.
$$
It is well-defined on the space $\mathcal{W}^s$ for $0<s\leq 1$. In fact, it is straightforward to see that $\mathcal{J}_s\in C^{1}\left(\mathcal{W}^s, \mathbb{R}\right)$ and its Fr\'echet derivative is given by
\begin{align*}
\left\langle \mathcal{J}_s'(u), v\right\rangle = \left\langle\mathcal{J}_{s,p}'(u) , v\right\rangle
+ \mu \left\langle \mathcal{J}_{s,q}'(u), v \right\rangle
\end{align*}
for any $u, v \in \mathcal{W}^s$.

The expression $\mathcal{J}'_{s}$ defines what we call \emph{fractional $p \& q-$Laplacian}.

\medskip

We say that a function $u\in \mathcal{W}^s$ is an eigenfunction of \eqref{mainequation} corresponding to the eigenvalue $\lambda$ if
$$
\left\langle\mathcal{J}_s'(u), v\right\rangle + \int_{\Omega} (|u|^{p-2} u +\mu |u|^{q-2} u)  v\, d x=
\lambda\int_{\Omega}V(x) |u|^{p-2} u v \,d x
\quad \text{ for all }v\in \mathcal{W}^s.
$$

\medskip

\subsection{Some properties of the fractional $p \& q-$Laplacian}

It is well known that the operator $(-\Delta_p)^s$ is strictly monotone. This is a consequence of the well known Simon inequality (see \cite{Simon})
$$
\left(|a|^{p-2} a-|b|^{p-2} b\right) \cdot(a-b) \geq \begin{cases}c|a-b|^p & \text { if } p \geq 2 \\ c \frac{|a-b|^2}{(|a|+|b|)^{2-p}} & \text { if } 1<p<2,\end{cases}
$$
for any $a, b \in \mathbb{R}^N(N \in \mathbb{N})$, where the constant $c$ depends on $p$ and $N$. From the monotonicity of $(-\Delta_p)^s$, it is easy to see that
$$
\langle \mathcal J'_s (u)-\mathcal J'_s (v),u-v \rangle \geq 0 \qquad u,v\in \mathcal{W}^s.
$$

Moreover, from Lemma \ref{cvpsprel} it is immediate to see that $\mathcal J'_s (u) \to \mathcal J'_1 (u)$ as $s\uparrow 1$ in the following sense:
\begin{lem}\label{cvps}
  Let $s_{k} \uparrow 1$ and $v_{k} \in \mathcal{W}^{s_k}$ be such that $\sup _{k \in \mathbb{N}}\left[v_{k}\right]_{s_{k}}<\infty$. Assume, without loss of generality, that $v_{k} \rightarrow v$ strongly in $L^{p}(\Omega) .$ Then, for every $u \in W_{0}^{1, p}(\Omega)$, we have
$$
\langle \mathcal{J}'_{s_k}(u),v_k\rangle \rightarrow \langle \mathcal{J}'_1 (u), v\rangle, \quad\text{for any}\quad\mu>0.
$$
\end{lem}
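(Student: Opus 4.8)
The statement to prove is Lemma \ref{cvps}, asserting that for $s_k \uparrow 1$ and $v_k \in \mathcal{W}^{s_k}$ with $\sup_k [v_k]_{s_k} < \infty$ and $v_k \to v$ strongly in $L^p(\Omega)$, one has $\langle \mathcal{J}'_{s_k}(u), v_k\rangle \to \langle \mathcal{J}'_1(u), v\rangle$ for every $u \in W_0^{1,p}(\Omega)$ and every $\mu > 0$. The plan is to decompose $\langle \mathcal{J}'_{s_k}(u), v_k\rangle = \langle \mathcal{J}'_{s_k,p}(u), v_k\rangle + \mu \langle \mathcal{J}'_{s_k,q}(u), v_k\rangle = \langle (-\Delta_p)^{s_k} u, v_k\rangle + \mu \langle (-\Delta_q)^{s_k} u, v_k\rangle$, and treat each term separately using Lemma \ref{cvpsprel}.

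First I would handle the $p$-term. By hypothesis $[v_k]_{s_k} = [v_k]_{s_k,p} + [v_k]_{s_k,q}$ is uniformly bounded, so in particular $\sup_k [v_k]_{s_k,p}^p < \infty$; together with $v_k \to v$ in $L^p(\Omega)$, Lemma \ref{cvpsprel} applies directly and gives $\langle (-\Delta_p)^{s_k} u, v_k\rangle \to \langle -\Delta_p u, v\rangle$. For the $q$-term, I need to invoke Lemma \ref{cvpsprel} with exponent $q$ in place of $p$. The boundedness $\sup_k [v_k]_{s_k,q}^q < \infty$ again follows from $\sup_k [v_k]_{s_k} < \infty$. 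The one point needing care is the strong $L^q(\Omega)$ convergence of $v_k$ required by Lemma \ref{cvpsprel}: the hypothesis only gives $v_k \to v$ in $L^p(\Omega)$. Since $q < p$ and $\Omega$ is bounded, $L^p(\Omega) \hookrightarrow L^q(\Omega)$ continuously, so $v_k \to v$ in $L^q(\Omega)$ as well. Also $u \in W_0^{1,p}(\Omega) \subset W_0^{1,q}(\Omega)$ since $q < p$ and $\Omega$ is bounded, so Lemma \ref{cvpsprel} (with $q$) yields $\langle (-\Delta_q)^{s_k} u, v_k\rangle \to \langle -\Delta_q u, v\rangle$. Combining, $\langle \mathcal{J}'_{s_k}(u), v_k\rangle \to \langle -\Delta_p u, v\rangle + \mu \langle -\Delta_q u, v\rangle = \langle \mathcal{J}'_1(u), v\rangle$, which is the claim.

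I do not anticipate a serious obstacle here — the result is essentially a bookkeeping corollary of Lemma \ref{cvpsprel} applied twice. The only mild subtlety, as noted, is checking that the ambient hypotheses transfer from exponent $p$ to exponent $q$ (the embedding $L^p(\Omega) \hookrightarrow L^q(\Omega)$ and $W_0^{1,p}(\Omega) \hookrightarrow W_0^{1,q}(\Omega)$ on bounded $\Omega$), and that $v \in W_0^{1,p}(\Omega)$ — or at least that the limits $\langle -\Delta_p u, v\rangle$ and $\langle -\Delta_q u, v\rangle$ make sense — which follows since the limit in Lemma \ref{cvpsprel} is identified with $\langle -\Delta_p u, v\rangle$ under those hypotheses and similarly for $q$. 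One should remark that if $v \notin W_0^{1,p}(\Omega)$ the right-hand side is still well-defined as a duality pairing once one knows $v$ lies in the appropriate space, but in the applications $v$ will be the weak limit of a sequence bounded in $\mathcal{W}^{s_k}$, which by the Bourgain–Brezis–Mironescu result places $v \in W_0^{1,p}(\Omega)$; I would simply cite this. With these observations the proof is a few lines.
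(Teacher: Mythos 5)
Your proposal is correct and follows exactly the route the paper intends: the paper treats Lemma \ref{cvps} as an immediate consequence of Lemma \ref{cvpsprel} applied separately to the $p$- and $q$-terms of $\mathcal{J}'_{s_k}$, which is precisely your decomposition. Your extra checks (boundedness of $[v_k]_{s_k,q}$, the embeddings $L^p(\Omega)\hookrightarrow L^q(\Omega)$ and $W_0^{1,p}(\Omega)\subset W_0^{1,q}(\Omega)$ on the bounded domain) simply make explicit the bookkeeping the paper leaves implicit.
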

The previous lemma says that, for any $\mu>0$,
$$
\left\langle\left(-\Delta_{p}\right)^{s_{k}} u+\mu\left(-\Delta_{q}\right)^{s_{k}} u, v_{k}\right\rangle \rightarrow\left\langle-\Delta_{p} u-\mu\Delta_{q} u, v\right\rangle.
$$

\section{The case $\mu\to 0^+$}

\subsection{The first eigenvalue}
In this section we assume that the potential $V$ is a positive weight of class $L^{r}(\Omega)$, with $r>\frac{N}{sp}$. Let
$$
L^{p}(\Omega, V)=\left\{u: \Omega \rightarrow \mathbb{R} \text { measurable }: V|u|^{p} \in L^{1}(\Omega)\right\}.
$$
This space is endowed with the norm $\|\cdot\|_{p, V}$ which is defined by
$$
\|u\|_{p, V}^p=\int_{\Omega} V(x)|u(x)|^{p}\, d x.
$$
We remark that the embedding $W_0^{s, p}(\Omega) \hookrightarrow L^{p}(\Omega, V)$ is compact. Indeed, by our assumption, for any $p>1$, $r'p<p^*$, and then, by \cite[Corollary 7.2]{PNV} $W_0^{s, p}(\Omega) \hookrightarrow L^{r' p}(\Omega)$ is compact,  and $L^{r' p}(\Omega) \hookrightarrow L^{p}(\Omega, V)$ is continuous, since $\|u\|_{p, V}^{p} \leq\|V\|_{r}\|u\|_{r^{\prime} p}^{p}$ for all $u \in L^{r^{\prime} p}(\Omega)$ by the H\"{o}lder inequality.

\begin{prop}\label{infimum}
The infimum in \eqref{lambda1} is attained by a function $u_1\in W^{s,p}_0 (\Omega)$ normalized such that $\int_{\Omega}V(x)|u|^{p}d x=1$.
\end{prop}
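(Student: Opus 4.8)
The plan is to use the direct method of the calculus of variations. Let $(u_n)\subset W^{s,p}_0(\Omega)$ be a minimizing sequence for the quotient in \eqref{lambda1}, normalized so that $\int_\Omega V(x)|u_n|^p\,dx = 1$ for all $n$. Then $[u_n]_{s,p}^p + \|u_n\|_p^p \to \lambda_1(p,V)$, so in particular this quantity is bounded, which means $(u_n)$ is bounded in $W^{s,p}_0(\Omega)$ with respect to the equivalent norm $\|\cdot\|_{s,p}$. Since $W^{s,p}_0(\Omega)$ is reflexive (because $p>1$), up to a subsequence we may assume $u_n \rightharpoonup u_1$ weakly in $W^{s,p}_0(\Omega)$.

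The next step is to pass to the limit. By the compact embedding $W^{s,p}_0(\Omega)\hookrightarrow\hookrightarrow L^p(\Omega,V)$ noted just before the proposition (valid under the standing assumption $V\in L^r(\Omega)$ with $r>\frac{N}{sp}$, $V>0$), we have $u_n \to u_1$ strongly in $L^p(\Omega,V)$, hence $\int_\Omega V(x)|u_1|^p\,dx = \lim_n \int_\Omega V(x)|u_n|^p\,dx = 1$; in particular $u_1\neq 0$ and the normalization is preserved, so $u_1$ is admissible in \eqref{lambda1}. For the numerator, both $u\mapsto [u]_{s,p}^p$ and $u\mapsto \|u\|_p^p$ are convex and strongly continuous on $W^{s,p}_0(\Omega)$, hence weakly lower semicontinuous; therefore
$$
[u_1]_{s,p}^p + \|u_1\|_p^p \le \liminf_{n\to\infty}\left([u_n]_{s,p}^p + \|u_n\|_p^p\right) = \lambda_1(p,V).
$$
Combined with the fact that $u_1$ is admissible — so the quotient evaluated at $u_1$ is $\ge \lambda_1(p,V)$ and, since the denominator equals $1$, the numerator is $\ge \lambda_1(p,V)$ — we conclude $[u_1]_{s,p}^p + \|u_1\|_p^p = \lambda_1(p,V)$, i.e. the infimum is attained at $u_1$.

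The only point requiring a little care — the main (mild) obstacle — is justifying the strong $L^p(\Omega,V)$ convergence and hence the non-degeneracy of the limit; this is exactly where the compactness of $W^{s,p}_0(\Omega)\hookrightarrow L^p(\Omega,V)$ is essential, since without it the mass in the denominator could escape and one could end up with $u_1\equiv 0$. One can also remark that, since $[\,\cdot\,]_{s,p}^p+\|\cdot\|_p^p$ is strictly convex along rays and the functional is $p$-homogeneous, any minimizer can be rescaled to satisfy the stated normalization $\int_\Omega V(x)|u_1|^p\,dx = 1$, which is what the statement asserts. Strict monotonicity/convexity will later give simplicity, but for mere attainment the weak lower semicontinuity above suffices.
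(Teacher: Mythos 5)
Your proof is correct and is, in substance, the same argument the paper uses: the paper simply packages the direct method into a citation of an abstract constrained-minimization theorem (Theorem 6.3.2 in Berger), verified via the weak sequential continuity of $u\mapsto\|u\|_{p,V}^p$ (the compact embedding $W^{s,p}_0(\Omega)\hookrightarrow L^p(\Omega,V)$ noted just before the proposition) and the coercivity of $[u]_{s,p}^p+\|u\|_p^p$, whereas you carry out the minimizing-sequence, weak-compactness and weak lower semicontinuity steps explicitly. Both routes hinge on exactly the same two ingredients — compactness of the constraint functional and weak lower semicontinuity of the energy — so nothing is missing; your closing remark about rescaling by homogeneity is unnecessary (the normalization is already imposed along the minimizing sequence) but harmless.
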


\begin{proof}
Observe that we can write
$$
\lambda_1(p,V)=\inf_{u\in \mathcal{S}} \frac{\mathcal{P}(u)}{\mathcal{B}(u)}
$$
with $\mathcal{P}(u)=[u]_{s,p}^p+\|u\|_p^p$, $\mathcal{B}(u)=\|u\|_{p,V}^p$ and $\mathcal{S}=\{ u\in W^{s,p}_0 (\Omega)\backslash\{0\},\ \|u\|_{p,V}=1 \}$. Then, since $\mathfrak{B}$ is continuous with respect to weak sequential convergence, $\mathcal{B}^{'}(u)=0$ only at $u=0$, and $\mathcal{P}$ is coercive on $W^{s,p}_0(\Omega)\cap \{\mathfrak{B}(u)\leq c\}$, we can apply, for instance,  Theorem 6.3.2 of \cite{Berger} to conclude the result.
\end{proof}

\begin{prop}[Minimum principle]\label{stricpositive}
  If $u$ is a nonnegative eigenfunction of \eqref{quation-p} associated to $\lambda$ then $u>0$ in $\Omega$.
\end{prop}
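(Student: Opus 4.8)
The plan is to recognize $u$ as a nonnegative weak supersolution of a fractional $p$-Laplace equation whose zeroth-order term has a favourable sign, and then to invoke the strong minimum principle for $(-\Delta_p)^s$. Since $u\ge 0$ we have $|u|^{p-2}u=u^{p-1}$, so the weak formulation of \eqref{quation-p} reads
\[
\langle (-\Delta_p)^s u,v\rangle=\int_\Omega\bigl(\lambda V(x)-1\bigr)\,u^{p-1}v\,dx\qquad\text{for all }v\in W^{s,p}_0(\Omega).
\]
Setting $c:=\bigl\|(1-\lambda V)^+\bigr\|_{L^\infty(\Omega)}\le 1+|\lambda|\,\|V\|_{L^\infty(\Omega)}$, we obtain $\langle (-\Delta_p)^s u,v\rangle\ge-c\int_\Omega u^{p-1}v\,dx$ for every nonnegative $v\in W^{s,p}_0(\Omega)$, i.e.\ $u$ is a nonnegative weak supersolution of $(-\Delta_p)^s w+c\,w^{p-1}=0$ in $\Omega$. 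Being an eigenfunction, $u\not\equiv 0$; moreover, by the local boundedness and Hölder regularity of fractional $p$-minimizers (De Giorgi--Nash--Moser theory), $u$ has a continuous representative, so its pointwise values in $\Omega$ are meaningful.

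Because $\Omega$ may be disconnected, I would first verify that $u$ does not vanish identically on any connected component $\Omega_0$ of $\Omega$. If it did, testing \eqref{quation-p} with $\varphi\in C_c^\infty(\Omega_0)$, $\varphi\ge 0$, $\varphi\not\equiv 0$, kills every term supported where $u$ or $\varphi$ vanishes, and the symmetry of the kernel leaves only
\[
-K(N,s,p)\iint_{\Omega_0\times(\R^N\setminus\Omega_0)}\frac{u(y)^{p-1}\varphi(x)}{|x-y|^{N+sp}}\,dx\,dy=0,
\]
which is impossible since $u\not\equiv 0$ on $\Omega\setminus\Omega_0\subset\R^N\setminus\Omega_0$ while $\varphi>0$ on a set of positive measure. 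Hence on each component $u$ is a nonnegative, not identically zero, weak supersolution of $(-\Delta_p)^sw+cw^{p-1}=0$.

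It then remains to apply the strong minimum principle for the fractional $p$-Laplacian (in the spirit of Del Pezzo--Quaas and Brasco--Franzina, which rely on the logarithmic Caccioppoli estimate and the weak Harnack inequality of Di Castro--Kuusi--Palatucci): a nonnegative weak supersolution of $(-\Delta_p)^sw+cw^{p-1}=0$ that is not identically zero on a connected open set $\Omega_0$ satisfies $\operatorname{ess\,inf}_{\Omega'}u>0$ for every $\Omega'\Subset\Omega_0$. Combined with the continuity of $u$, this gives $u>0$ on each component, hence $u>0$ in $\Omega$.

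The step I expect to be the main obstacle is making this minimum principle genuinely applicable, i.e.\ absorbing the reaction term $c\,w^{p-1}$ and the nonlocal tail into the logarithmic estimate. If one prefers to argue directly, the concrete route is: fix $B_{2R}(x_0)\Subset\Omega$, use the logarithmic energy estimate for supersolutions to bound $\iint_{B_R(x_0)\times B_R(x_0)}\bigl|\log\tfrac{u(x)+d}{u(y)+d}\bigr|^p|x-y|^{-N-sp}\,dx\,dy$ by a constant independent of $d>0$ (the term $cu^{p-1}$ and the tail of $u$ contribute only bounded amounts since $u\in L^\infty$), and let $d\to 0^+$; by Fatou's lemma the bound can stay finite only if $\{u=0\}$ is null in $B_R(x_0)$ whenever $u\not\equiv 0$ there, and a chaining argument over $\Omega_0$ together with the continuity of $u$ then propagates strict positivity to all of $\Omega$.
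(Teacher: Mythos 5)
Your argument is correct and follows essentially the same route as the paper: the paper likewise reduces the claim to showing that $u$ cannot vanish identically on any connected component of $\Omega$ (using exactly your nonlocal test-function computation, which forces $u\equiv 0$ everywhere), and then concludes strict positivity by invoking the strong minimum principle of Brasco--Franzina (Theorem A.1 there), i.e.\ the same result you cite rather than reproving the logarithmic/weak-Harnack machinery.
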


\begin{proof}
  In light of \cite[Theorem A.1]{Brasco-Franzina} it is enough to prove that if $u$ is an eigenfunction associated to $\lambda$ then $u\neq 0$ in all connected components of $\Omega$. Suppose, by contradiction, that there is $K$ a connected component of $\Omega$ such that $u\equiv 0$ in $K$.

  Taking $\psi\in C^\infty_c (K)$ as a test function, we get

  \begin{align*}
     &\iint_{\mathbb{R}^{2 N}} \frac{|u(x)-u(y)|^{p-2}(u(x)-u(y))(\psi(x)-\psi(y))}{|x-y|^{N+s p}} d x d y=0\\
     \Leftrightarrow& \int_{K}\psi(y)\int_{\Omega\backslash K}(u(x))^{p-1}\left(\frac{1}{|x-y|^{N+sp}}\right) dxdy = 0,\ \forall \psi\in C^\infty_c (K).
  \end{align*}

Then $u=0$ in $\Omega$, which is a contradiction.
\end{proof}

We prove next lower bound for the Lebesgue measure of $\Omega_{\pm}$.
\begin{lem}\label{isolated}
 If $u$ is an eigenfunction associated to $\lambda>\lambda_1(p,V)$, then
 $$
 (\lambda C \|V\|_r)^\frac{1}{ \frac{p}{\alpha}-\frac{1}{r'}} \leq |\Omega_\pm|
 $$
where $\alpha \in\left(p r^{\prime}, p_{s}^{*}\right)$ and is a constant independent of $V, \lambda$ and $u$.
\end{lem}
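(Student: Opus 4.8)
The plan is to test the eigenvalue equation \eqref{quation-p} against the positive part $u_+:=\max(u,0)$ of $u$ (the argument for $u_-$ and $\Omega_-$ being verbatim the same), to turn the nonlocal term into a genuine Gagliardo seminorm of $u_+$ by the elementary inequality for positive parts, and then to absorb the resulting factor $[u_+]_{s,p}^p$ after two applications of Hölder's inequality and one of the Sobolev embedding. I first record that, since $\lambda>\lambda_1(p,V)$, the eigenfunction $u$ must change sign — constant-sign eigenfunctions being associated only with $\lambda_1(p,V)$ — so that $u_\pm\in W^{s,p}_0(\Omega)\setminus\{0\}$ with $|\Omega_\pm|>0$; in particular $u_+$ is an admissible test function and $[u_+]_{s,p}^p>0$. (That $u_+\in W^{s,p}_0(\Omega)$ follows from $|u_+(x)-u_+(y)|\le|u(x)-u(y)|$.)

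The core computation uses the pointwise inequality
$$
|a-b|^{p-2}(a-b)\,(a_+-b_+)\ \geq\ |a_+-b_+|^p,\qquad a,b\in\R,
$$
verified by a short case analysis on the signs of $a$ and $b$. Inserting it inside the double integral defining $\langle(-\Delta_p)^s u,u_+\rangle$, and using $\int_\Omega|u|^{p-2}u\,u_+\,dx=\|u_+\|_p^p$ together with $\int_\Omega V|u|^{p-2}u\,u_+\,dx=\int_{\Omega_+}Vu_+^p\,dx$, the equation tested against $u_+$ gives (with a universal numerical factor from the normalization that I absorb into the final constant $C$)
$$
[u_+]_{s,p}^p\ \leq\ [u_+]_{s,p}^p+\|u_+\|_p^p\ \lesssim\ \lambda\int_{\Omega_+}V\,u_+^p\,dx.
$$
Next I estimate the right-hand side: Hölder with exponents $r,r'$ on $\Omega_+$ yields $\int_{\Omega_+}Vu_+^p\leq\|V\|_r\,\|u_+\|_{L^{pr'}(\Omega_+)}^p$; since $pr'<\alpha$ and $|\Omega_+|<\infty$, a further Hölder on $\Omega_+$ gives $\|u_+\|_{L^{pr'}(\Omega_+)}\leq|\Omega_+|^{\frac1{pr'}-\frac1\alpha}\,\|u_+\|_{L^\alpha(\Omega)}$; and since $\alpha<p_s^*$, the Sobolev embedding $W^{s,p}_0(\Omega)\hookrightarrow L^\alpha(\Omega)$ together with the Poincaré inequality gives $\|u_+\|_{L^\alpha(\Omega)}\leq C_0\,[u_+]_{s,p}$ with $C_0=C_0(N,s,p,\alpha,\Omega)$ only. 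Chaining these estimates,
$$
[u_+]_{s,p}^p\ \leq\ C\,\lambda\,\|V\|_r\,|\Omega_+|^{\frac1{r'}-\frac p\alpha}\,[u_+]_{s,p}^p .
$$

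Finally, since $[u_+]_{s,p}^p>0$ I divide it out to get $1\leq C\,\lambda\,\|V\|_r\,|\Omega_+|^{\frac1{r'}-\frac p\alpha}$; because $\alpha>pr'$ the exponent $\frac1{r'}-\frac p\alpha$ is strictly positive, so raising to its reciprocal power preserves the inequality and yields
$$
|\Omega_+|\ \geq\ \big(C\,\lambda\,\|V\|_r\big)^{\frac{1}{\frac{p}{\alpha}-\frac{1}{r'}}},
$$
and $\Omega_-$ is handled identically with $u_-$. I expect the only genuinely delicate points to be the case-check behind the pointwise positive-part inequality and, more importantly, the justification that the hypothesis $\lambda>\lambda_1(p,V)$ forces $u$ to change sign (so both nodal sets are nontrivial and the estimate is not vacuous); everything else is bookkeeping, with care needed only to ensure $C$ depends on $N,s,p,\alpha,\Omega$ alone and not on $V,\lambda,u$.
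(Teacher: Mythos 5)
Your proposal is correct and follows essentially the same route as the paper: nontriviality of $u_\pm$ from the sign-changing property of eigenfunctions associated to $\lambda>\lambda_1(p,V)$ (Proposition \ref{lambda1p}), the pointwise inequality $|u(x)-u(y)|^{p-2}(u(x)-u(y))(u_+(x)-u_+(y))\geq|u_+(x)-u_+(y)|^p$ after testing with $u_+$, then Hölder with $(r,r')$, a second Hölder using $pr'<\alpha$, and the Sobolev embedding into $L^\alpha$ before dividing out $[u_+]_{s,p}^p$. The only cosmetic difference is that you keep the normalization constant explicit and absorb it into $C$, which the paper does implicitly.
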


\begin{proof}
Due to Proposition \ref{lambda1p}, $u_\pm$ are not trivial. Observe that $u_{+} \in W^{s, p}(\Omega)$ and
$$
\left|u_{+}(x)-u_{+}(y)\right|^p \leq|u(x)-u(y)|^{p-2}(u(x)-u(y))\left(u_{+}(x)-u_{+}(y)\right)
$$
for all $(x, y) \in \mathbb{R}^{N} \times \mathbb{R}^{N}$. Then, using H\"{o}lder's inequality, we have
\begin{align*}
  [u_{+}]_{s, p}^{p}  &\leq \langle (-\Delta_p)^s u, u_+\rangle=\lambda \int_{\Omega}V(x) u_{+}^{p} \,d x-\int_{\Omega}  u_{+}^{p} \,d x\\
&\leq \lambda\|V\|_{r}\|u_+\|_{p r^{'}}^p \leq \lambda\|V\|_{r}|\Omega_+|^{\frac{1}{r^{'}}-\frac{p}{\alpha}}\|u_+\|_{\alpha}^p.
\end{align*}
On the other hand, by Sobolev embedding, $\|u_{+}\|_{\alpha}^p \leq C\left[u_{+}\right]_{s, p}^p$, which gives
$$
1 \leq \lambda C\|V\|_{r}\left|\Omega_{+}\right|^{\frac{1}{r^{\prime}}-\frac{p}{\alpha}}
$$
and then the result holds for $|\Omega_+|$. The inequality for $|\Omega_-|$ is obtained analogously.
\end{proof}

The following statement summarizes some basic facts about the first eigenvalue $\lambda_1(p,V)$.

\begin{prop}\label{lambda1p}
$~$
  \begin{enumerate}
    \item [(i)]  $\lambda_1(p,V)$ is simple, and all eigenfunctions associated to an eigenvalue $\lambda>\lambda_1(p,V)$ must change sign.
    \item [(ii)]  All eigenfunctions associated to $\lambda_1(p,V)$ have constant sign.
    \item [(iii)] If $\Omega$ is a ball, then all eigenfunctions associated to $\lambda_1(p,V)$ are radially symmetric and radially monotone.
    \item [(iv)] $\lambda_1(p,V)$ is isolated.
  \end{enumerate}
\end{prop}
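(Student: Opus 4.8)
I would establish the four assertions in the order (ii), (i), (iii), (iv), the analytic ingredients being: a rearrangement of $|u|$ inside the Rayleigh quotient; the fractional Picone and ``hidden convexity'' inequalities for the Gagliardo seminorm (available from \cite{Brasco-Franzina}); the fractional P\'olya--Szeg\H{o} inequality together with its equality case; and, for the isolation, the compact embedding $W^{s,p}_0(\Omega)\hookrightarrow L^p(\Omega,V)$, the $(S)$ property of $(-\Delta_p)^s$ recalled in Section 2, and the uniform measure estimate of Lemma \ref{isolated}.

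For (ii): if $u$ is an eigenfunction for $\lambda_1(p,V)$, the pointwise inequality $\lvert\,\lvert u(x)\rvert-\lvert u(y)\rvert\,\rvert\le\lvert u(x)-u(y)\rvert$ gives $[\,|u|\,]_{s,p}\le[u]_{s,p}$, while $\||u|\|_p=\|u\|_p$ and $\||u|\|_{p,V}=\|u\|_{p,V}$; since $u$ attains the infimum in \eqref{lambda1}, so does $|u|$, and then the equality $[\,|u|\,]_{s,p}=[u]_{s,p}$ forces $u(x)u(y)\ge0$ for a.e.\ $(x,y)\in\mathbb{R}^N\times\mathbb{R}^N$, i.e.\ $u$ has constant sign. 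Replacing $u$ by $-u$ if needed and applying the minimum principle (Proposition \ref{stricpositive}) yields $u>0$ in $\Omega$.

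For (i): I would first show that an eigenfunction associated to any $\lambda>\lambda_1(p,V)$ must change sign. Otherwise, by (ii) and Proposition \ref{stricpositive} such an eigenfunction $u$ satisfies $u>0$ in $\Omega$; testing the weak formulation for $u$ with $u_1^p/u^{p-1}$ (with $u_1>0$ a first eigenfunction from Proposition \ref{infimum}, made admissible by the standard truncation argument of \cite{Brasco-Franzina}) and invoking the fractional Picone inequality $\langle(-\Delta_p)^s u,\,u_1^p/u^{p-1}\rangle\le[u_1]_{s,p}^p$, one gets $\lambda\,\|u_1\|_{p,V}^p\le[u_1]_{s,p}^p+\|u_1\|_p^p=\lambda_1(p,V)\,\|u_1\|_{p,V}^p$, hence $\lambda\le\lambda_1(p,V)$, a contradiction. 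For the simplicity, given two eigenfunctions $u_1,u_2>0$ for $\lambda_1(p,V)$ normalized by $\|u_i\|_{p,V}=1$, I would set $\eta_t:=\bigl((1-t)u_1^p+t\,u_2^p\bigr)^{1/p}$: then $\|\eta_t\|_{p,V}^p=1$ and $\|\eta_t\|_p^p=(1-t)\|u_1\|_p^p+t\|u_2\|_p^p$ exactly, while hidden convexity gives $[\eta_t]_{s,p}^p\le(1-t)[u_1]_{s,p}^p+t[u_2]_{s,p}^p$ with equality iff $u_1$ and $u_2$ are proportional; inserting $\eta_t$ into \eqref{lambda1} forces equality throughout, whence $u_1=u_2$.

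For (iii), assuming in addition that $\Omega$ is a ball centered at the origin and that $V$ is radially symmetric and radially non-increasing (so $V^\ast=V$; in particular this covers $V\equiv1$): writing $u_1>0$ for a first eigenfunction and $u_1^\ast$ for its symmetric decreasing rearrangement, the fractional P\'olya--Szeg\H{o} inequality gives $[u_1^\ast]_{s,p}\le[u_1]_{s,p}$ and $\|u_1^\ast\|_p=\|u_1\|_p$, while the Hardy--Littlewood inequality gives $\|u_1^\ast\|_{p,V}^p\ge\|u_1\|_{p,V}^p$; hence $u_1^\ast$ is again a minimizer of \eqref{lambda1}, equality must hold in P\'olya--Szeg\H{o}, and its equality case forces $u_1$ to be a translate of $u_1^\ast$ — which, since $u_1=0$ outside the ball, means $u_1=u_1^\ast$, i.e.\ radially symmetric and radially monotone. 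For (iv): $\lambda_1(p,V)$ is the least eigenvalue, since any eigenvalue equals $([u]_{s,p}^p+\|u\|_p^p)/\|u\|_{p,V}^p\ge\lambda_1(p,V)$ (using $V>0$). If it were not isolated, there would be eigenvalues $\lambda_k\downarrow\lambda_1(p,V)$, $\lambda_k>\lambda_1(p,V)$, with eigenfunctions $u_k$ normalized by $\|u_k\|_{p,V}=1$; then $[u_k]_{s,p}^p+\|u_k\|_p^p=\lambda_k$ is bounded, so along a subsequence $u_k\rightharpoonup u$ in $W^{s,p}_0(\Omega)$, $u_k\to u$ in $L^p(\Omega)$ and in $L^p(\Omega,V)$ and a.e.\ in $\Omega$, giving $\|u\|_{p,V}=1$, $u\ne0$. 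Testing the equations for $u_k$ against $u_k-u$, the lower-order and weight terms vanish by the $L^p(\Omega)$ and $L^p(\Omega,V)$ convergences, so $\langle(-\Delta_p)^s u_k,u_k-u\rangle\to0$ and the $(S)$ property upgrades the convergence to strong in $W^{s,p}_0(\Omega)$; passing to the limit, $u$ is an eigenfunction for $\lambda_1(p,V)$, hence by (ii) and Proposition \ref{stricpositive} we may take $u>0$ in $\Omega$. Consequently $\mathbf{1}_{\{u_k\le0\}}\to0$ a.e.\ in $\Omega$, so $|\{u_k\le0\}\cap\Omega|\to0$ by dominated convergence, contradicting the bound $|\{u_k<0\}|\ge(\lambda_k C\|V\|_r)^{1/(p/\alpha-1/r')}\ge c_0>0$ of Lemma \ref{isolated} (uniform in $k$ since $(\lambda_k)$ is bounded). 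The two steps I expect to be most delicate are the rigorous treatment of the Picone test function through truncation/approximation, and the appeal to the sharp equality cases of the hidden-convexity and P\'olya--Szeg\H{o} inequalities in the fractional framework.
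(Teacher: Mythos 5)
Your proposal is correct, and its skeleton matches the paper's (triangle inequality plus the minimum principle for (ii), Picone-type tools for (i), symmetrization for (iii), contradiction with Lemma \ref{isolated} for (iv)), but several sub-steps are carried out by genuinely different means. For (i) the paper's entire proof is the line ``it follows by Picone's identity''; your implementation --- the fractional Picone inequality with test function $u_1^p/u^{p-1}$ (after truncation) for the sign-change claim, and hidden convexity along $\eta_t=((1-t)u_1^p+t\,u_2^p)^{1/p}$ with its equality case for simplicity --- is the standard way of making that line precise, so this is a fleshing-out rather than a new route; note only that positivity of a non-sign-changing eigenfunction for $\lambda>\lambda_1(p,V)$ needs just Proposition \ref{stricpositive}, not item (ii). In (ii) you actually prove more than the paper does: the paper only verifies that the minimizer can be replaced by its absolute value, while your use of the equality case in $\bigl||u(x)|-|u(y)|\bigr|\le|u(x)-u(y)|$ shows that \emph{every} first eigenfunction has constant sign, which is what the statement asserts. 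In (iii) the paper cites \cite{B} to get $[u^*]_{s,p}\le[u]_{s,p}$, $\|u^*\|_p=\|u\|_p$ and $\|u^*\|_{p,V}=\|u\|_{p,V}$, and then concludes $u^*=u$ from simplicity; you instead use Hardy--Littlewood for the weight term and the equality case of the fractional P\'olya--Szeg\H{o} inequality. Your explicit hypothesis that $V$ be radial and radially non-increasing is a fair correction --- the paper's equality $\|u^*\|_{p,V}=\|u\|_{p,V}$ is only legitimate under such an assumption (e.g.\ $V\equiv1$) --- whereas concluding via simplicity, as the paper does, spares one the sharp equality case of P\'olya--Szeg\H{o}, which is the heavier tool. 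In (iv) both arguments produce a positive limit eigenfunction and contradict the uniform lower bound of Lemma \ref{isolated} on $|\{u_k<0\}|$; the paper gets there via weak lower semicontinuity of the Rayleigh quotient plus Egorov, while you upgrade to strong $W^{s,p}_0(\Omega)$ convergence through the $(S)$ property and then use a.e.\ convergence and dominated convergence --- correct, and it avoids Egorov, though slightly more than needed, since a.e.\ convergence along a subsequence (already available from the compact embedding into $L^p(\Omega)$, with the limit identified as a first eigenfunction by lower semicontinuity) suffices for the measure estimate.
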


\begin{proof}
 $(i)$. It follows by Picone's identity.

$(ii)$. The eigenfunction $u_1$ can be chosen to be positive. Indeed, since
$$
||u_1(x)|-|u_1(y)||\leq |u_1(x)-u_1(y)|\ \text{for all}\ x,y\in \R^N,
$$
we get $[|u_1|]_{s,p}^p\leq[u_1]_{s,p}^p\leq\lambda_1(p,V)$.
  This implies that $|u_1|$ is an eigenfunction associated to $\lambda_1(p,V)$, and by Proposition \ref{stricpositive}, we have $|u_1|>0$.

 $(iii)$. Let $u$ be an eigenfunction associated to $\lambda_1$ in the ball $\Omega$ and denote by $u^*$ the Schwarz symmetrization of $u$. By $(ii)$,  we may assume that  $u>0$. By \cite[Theorem 3]{B} it is known that $\|u^*\|_p=\|u\|_p$, $\|u^*\|_{V,p}=\|u\|_{V,p}$ and $[u^*]_{s,p}\leq [u]_{s,p}$. Hence, we have from Proposition \ref{infimum} that
 $$
 \lambda_1(p,V)\leq [u^*]_{s,p}^p+|u^*\|_p^p\leq [u]_{s,p}^p+|u\|_p^p=\lambda_1(p,V).
 $$
 Thus $u^*$ is an eigenfunction associated to $\lambda_1(p,V)$. By $(i)$, $u^*$ and $u$ are proportional and by the inequality above we obtain $u^*=u$.

$(iv)$. By the definition of the eigenvalue, $\lambda_1(p,V)$ is left-isolated. To prove that $\lambda_1(p,V)$ is right-isolated, we argue by contradiction. We assume that there exists a sequence of eigenvalues $\{\lambda_k\}_{k\in\mathbb{N}}$ such that $\lambda_k>\lambda_1(p,V)$ and $\lambda_k\searrow \lambda_1(p,V)$ as $k\to \infty$. Let $u_k$ be an eigenfunction associated to $\lambda_k$, we can assume that $\|u_k\|^p_{s,V}=1$. Then $\{u_k\}_{k\in\mathbb{N}}$ is bounded in $W^{s,p}_0(\Omega)$ and up to a  subsequence $u_k$ converges to $u$ weakly in $W^{s,p}_0(\Omega)$ and strongly in $L^p(\Omega,V)$. Then $\|u\|^p_{p,V}=1$ and
\begin{align*}
    [u]^p_{s,p} + \|u\|_p^p& \leq \liminf_{k\to \infty} ( [u_k]^p_{s,p} + \|u_k\|_p^p)= \lim_{k\to \infty} \lambda_k\int_{\Omega}V(x)|u_k|^p\, d x\\
    &=\lambda_1(p,V)\int_{\Omega}V(x)|u|^p d x.
  \end{align*}
Hence, $u$ is an eigenfunction associated to $\lambda_1(p,V)$. By $(ii)$, we can assume
that $u > 0$. On the other hand, by the Egorov's theorem, for any $\varepsilon > 0$ there exists
$U_\varepsilon\subset\Omega$ such that $|U_\varepsilon| < \varepsilon$ and $u_k\to u$ uniformly in $\Omega\setminus U_\varepsilon$. This contradicts Lemma \ref{isolated} since, for $\alpha \in\left(p r^{\prime}, p_{s}^{*}\right)$, we have
$$
0<\lim_{k\to\infty} (\lambda_k C \|V\|_r)^\frac{1}{ \frac{p}{\alpha}-\frac{1}{r'}} \leq\left|\Omega_{\pm}\right|\leq\lim_{k\to\infty}|\{x\in\R^N\colon\ u_k<0\}|,
$$
getting a contradiction when $k\to\infty$.
\end{proof}

\subsection{Sequence of eigenvalues}

We now construct a suitable sequence of eigenvalues $(\lambda_k)_k$ of \eqref{quation-p}. The construction is based on the Alexander-Spanier Cohmology with $\mathbb{Z}_2-$coefficients method (see Fadell and Rabinowitz \cite{MR0478189}).

In order to neatly present our findings, we first need to give some basic notations. Denote by $\mathcal{F}$ the class of symmetric subsets of $S$, and denote by $i(A)$ the $\mathbb{Z}_2$-cohomological index of a symmetric set $A \subset W^{s,p}_0(\Omega) \setminus \{0\}$. Let
\[
\mathcal{F}_k = \{M \in \mathcal{F} : i(M) \geq k\},
\]

We set for all $u\in W^{s,p}_0(\Omega)$
$$
I(u)=[u]_{s, p}^{p}+\|u\|_p^{p},\quad J(u)=\int_{\Omega}V(x)|u|^p d x,
$$
$$
S_V=\{u\in W^{s,p}_0(\Omega):\ \int_{\Omega}V(x)|u|^p d x=1 \}.
$$
We denote by $\bar{I}$ the restriction of $I$ to $S_V$. For all $\lambda>0$, $\lambda$ is a critical value of $\bar{I}$ if and only if it is an eigenvalue of \eqref{quation-p}.

Now, we can construct the sequence $(\lambda_k)_k$ of eigenvalues for problem \eqref{quation-p}, setting $$\lambda_k=\inf_{A\in \mathcal{F}_k}\sup_{u\in A}I(u).$$

Since $S_V=J^{-1}(1)$, then $u\in S_V$ is a critical point of $\bar{I}$ if and only if there is some $\alpha\in\R$ such that $I^{'}(u)=\alpha J^{'}(u)$.

\begin{prop}\label{PS}
  The functional $\bar{I}$ satisfies the Palais-Smale condition at any level $c\in\R$ and for all $k\in \mathbb{N}$, $\lambda_k$ is an eigenvalue of problem \eqref{quation-p}. Moreover, $\lambda_k\to \infty$.
\end{prop}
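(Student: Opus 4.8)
The plan is to verify the Palais--Smale condition first, then use a standard minimax argument together with the properties of the $\mathbb{Z}_2$-cohomological index to conclude that each $\lambda_k$ is a critical value of $\bar I$ (hence an eigenvalue of \eqref{quation-p}) and that $\lambda_k\to\infty$.

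\emph{Step 1: Palais--Smale.} Let $(u_j)\subset S_V$ be a sequence with $\bar I(u_j)\to c$ and $\bar I'(u_j)\to 0$ in the sense that there exist $\alpha_j\in\R$ with $I'(u_j)-\alpha_j J'(u_j)\to 0$ in $W^{-s,p'}(\Omega)$. Since $I(u_j)=[u_j]_{s,p}^p+\|u_j\|_p^p$ is bounded, $(u_j)$ is bounded in $W^{s,p}_0(\Omega)$, so up to a subsequence $u_j\rightharpoonup u$ weakly in $W^{s,p}_0(\Omega)$ and strongly in $L^p(\Omega,V)$ (using the compact embedding recalled before Proposition \ref{infimum}); in particular $J(u)=1$, so $u\in S_V$. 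Testing $I'(u_j)-\alpha_j J'(u_j)$ against $u_j$ shows $\alpha_j\to c>0$ (note $c\ge\lambda_1(p,V)>0$ since $I\ge\lambda_1(p,V)J$ on $S_V$). Then testing against $u_j-u$ gives $\langle I'(u_j),u_j-u\rangle = \langle \mathcal{J}'_{s,p}(u_j),u_j-u\rangle + \int_\Omega |u_j|^{p-2}u_j(u_j-u)\to 0$, because $\alpha_j J'(u_j)\to c\,J'(u)$ strongly and $\langle J'(u),u_j-u\rangle\to 0$, while the $L^p$ term vanishes by strong $L^p$ convergence. Hence $\langle \mathcal{J}'_{s,p}(u_j),u_j-u\rangle\to 0$, and since $(-\Delta_p)^s$ is of type $(S)$ (as recalled in Section 2), $u_j\to u$ strongly in $W^{s,p}_0(\Omega)$. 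This proves the $(PS)_c$ condition for every $c\in\R$.

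\emph{Step 2: $\lambda_k$ are eigenvalues.} The set $\mathcal{F}_k$ is nonempty for every $k$: since $W^{s,p}_0(\Omega)$ is infinite dimensional and $J$ is a positive-definite form on a finite-measure positivity set of $V$, one builds, for each $k$, a $k$-dimensional linear subspace $E_k$ with $J>0$ on $E_k\setminus\{0\}$, and then $S_V\cap E_k$ is homeomorphic (via radial projection) to $S^{k-1}$, so $i(S_V\cap E_k)=k$, giving $\mathcal{F}_k\neq\emptyset$ and $\lambda_k<\infty$. Each $\lambda_k$ is finite and $\lambda_1\le\lambda_2\le\cdots$ by monotonicity of $\mathcal{F}_k$ in $k$. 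Since $\bar I$ is even, bounded below by $\lambda_1(p,V)>0$, satisfies $(PS)$ at every level by Step 1, and $\bar I'$ has the required deformation properties on the complete symmetric $C^1$-manifold $S_V$, the standard cohomological-index minimax theorem of Fadell--Rabinowitz \cite{MR0478189} applies: each $\lambda_k=\inf_{A\in\mathcal{F}_k}\sup_{u\in A}I(u)$ is a critical value of $\bar I$, hence (by the remark preceding the proposition) an eigenvalue of \eqref{quation-p}.

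\emph{Step 3: $\lambda_k\to\infty$.} Suppose not; then $\lambda_k\uparrow L<\infty$. For each $k$ pick $A_k\in\mathcal{F}_k$ with $\sup_{A_k}I\le L+1$, so $\bigcup_k A_k$ lies in the sublevel set $\{I\le L+1\}\cap S_V$, which is bounded in $W^{s,p}_0(\Omega)$ and, by the compact embedding $W^{s,p}_0(\Omega)\hookrightarrow L^p(\Omega,V)$, precompact in $L^p(\Omega,V)$; since $J\equiv 1$ there, one covers it by finitely many balls and uses the subadditivity and monotonicity of $i$ together with the fact that $i$ of a set contained in a finite-dimensional sphere is bounded, to get $\sup_k i(A_k)<\infty$, contradicting $i(A_k)\ge k$. (Equivalently, one invokes the standard fact that the sequence of cohomological-index minimax values over $\mathcal{F}_k$ of a functional with compact sublevel sets on $S_V$ tends to $+\infty$.)

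\emph{Main obstacle.} The delicate point is Step 1: because the operator is non-homogeneous only through the lower-order term $\|u\|_p^p$ here (the $\mu$-terms are absent when $\mu=0$), the argument is close to the pure fractional $p$-Laplacian case, and the type-$(S)$ property does the real work; one must be careful that the Lagrange multipliers $\alpha_j$ do not degenerate, which is why the lower bound $c\ge\lambda_1(p,V)>0$ is needed. In Step 3 the only subtlety is handling the cohomological index of the precompact sublevel sets, which is routine given the compact embedding into $L^p(\Omega,V)$.
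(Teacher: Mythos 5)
Your proposal is correct and follows essentially the same route as the paper: the Palais--Smale verification (boundedness, compact embedding into $L^p(\Omega,V)$, convergence of the multipliers $\alpha_j$ to $c$, and the type-$(S)$ property of $(-\Delta_p)^s$) is the argument the paper gives. For the facts that each $\lambda_k$ is a critical value and that $\lambda_k\to\infty$, the paper simply invokes \cite[Theorem 4.6]{MR2640827}, whereas you sketch the underlying cohomological-index minimax and finite-index-of-sublevel-sets arguments by hand; this is just an expansion of the cited result, not a different approach.
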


\begin{proof}
  Let $c\in\R$ and $(u_n)_n\subset S_V$ be a $(PS)_c$ sequence of $\bar{I}$. Hence there exists a suitable sequence $(\alpha_n)_n\subset\R$ such that
  $$
  \bar{I}(u_n)\to c\quad\text{and}\quad \bar{I}^{'}(u_n)-\alpha_n J^{'}(u_n)\to 0\ \text{in}\ W^{s,p}_0(\Omega).
  $$
  Clearly $(u_n)_n$ is bounded in $W^{s,p}_0(\Omega)$. Therefore we can pass to a subsequence $u_n\rightharpoonup u$ in $W^{s,p}_0(\Omega)$. Hence $u_n\to u$ in $L^p(\Omega,V)$. In particular $u\in S_V$. Now,
  $$
  \bar{I}(u_n)-\alpha_n=(\bar{I}^{'}(u_n), u_n)-\alpha_n(J^{'}(u_n),u_n)\to 0.
  $$
  Hence
  $$
  \alpha_n=\bar{I}(u_n)+o(1)\to c.
  $$
  Notice that, for all $n\in\mathbb{N}$, we have
  \begin{align*}
     |(\bar{I}^{'}(u_n), u_n-u)|& =(A_p u_n,u_n-u)+\int_{\Omega}|u_n|^{p-2}u_n(u_n-u)dx\\
     &=\alpha_n\left(\int_{\Omega}V(x)|u_n|^{p-2}u_n(u_n-u)\right)+o(1)\\
     &\leq \alpha_nC\|\omega_n\|_{V,p^{'}}\|u_n-u\|_p,
  \end{align*}
  and the latter vanishes as $n\to\infty$ because $\|\omega_n\|_{V,p^{'}}\to \|\omega\|_{V,p^{'}}$ where $\omega_n=|u_n|^{p-2}u_n$ and and similarly  $\omega=|u|^{p-2}u$. Hence, by the $(S)$-property of $A_p$, we get $u_n\to u$ in $W^{s,p}_0(\Omega)$.In other words, $\bar{I}$ verifies the $(P S)_c$ condition for all $c\in\R^+$.\\

  An application of \cite[Theorem 4.6]{MR2640827} to the even functional $I$ shows that $\lambda_k\to \infty$ as $k\to\infty$.
\end{proof}

\subsection{The second eigenvalue}

We start by defining
$$
\lambda_{2}(\Omega)=\inf _{f \in C_{1}(\Omega)} \max _{u \in \operatorname{lm} (f)}I(u)
$$
where the set $\mathcal{C}_{1}(\Omega)$ is given by
$$
\mathcal{C}_{1}(\Omega)=\left\{f: \mathbb{S}^{1} \rightarrow \mathcal{S}_{V}: f \text { odd and continuous }\right\}
$$

\begin{thm}
  The quantity $\lambda_{2}(\Omega)$ is an eigenvalue. Moreover we have $\lambda_{1}(p,V)<\lambda_{2}(\Omega)$ and every eigenfunction $u \in S_{V}$ associated to $\lambda_{2}(\Omega)$ has to change sign.
\end{thm}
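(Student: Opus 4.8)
The plan is to adapt the classical Cuesta--de Figueiredo--Gossez variational scheme for the second eigenvalue to the present nonlocal setting, taking as inputs the Palais--Smale condition for $\bar I$ on $S_V$ (Proposition \ref{PS}), the fact that the positive critical values of $\bar I$ are exactly the eigenvalues of \eqref{quation-p}, and the simplicity and isolation of $\lambda_1(p,V)$ (Proposition \ref{lambda1p}). First, one checks that $\mathcal C_1(\Omega)\neq\emptyset$: since $|\{V>0\}|>0$, pick two linearly independent functions $e_1,e_2\in W^{s,p}_0(\Omega)$ supported in a subset of $\Omega$ on which $V$ is bounded below by a positive constant; then $J>0$ on $\mathrm{span}\{e_1,e_2\}\setminus\{0\}$, and $\theta\mapsto(\cos\theta\,e_1+\sin\theta\,e_2)/J(\cos\theta\,e_1+\sin\theta\,e_2)^{1/p}$ is an odd continuous map $\mathbb S^1\to S_V$, i.e.\ an element of $\mathcal C_1(\Omega)$. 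Since $\lambda_1(p,V)=\inf_{S_V}I>0$, every $f\in\mathcal C_1(\Omega)$ satisfies $\max_{f(\mathbb S^1)}I\ge\lambda_1(p,V)$, so $0<\lambda_1(p,V)\le\lambda_2(\Omega)<\infty$.

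Next, I would show that $\lambda_2(\Omega)$ is a critical value of $\bar I$, hence an eigenvalue of \eqref{quation-p}. If not, then, $\bar I$ being even and satisfying $(PS)$ at the positive level $\lambda_2(\Omega)$, the $\mathbb Z_2$-equivariant deformation lemma furnishes $\varepsilon>0$ and an odd continuous map $\eta\colon S_V\to S_V$ with $\eta(\{\bar I\le\lambda_2(\Omega)+\varepsilon\})\subset\{\bar I\le\lambda_2(\Omega)-\varepsilon\}$. Taking $f\in\mathcal C_1(\Omega)$ with $\max_{f(\mathbb S^1)}I<\lambda_2(\Omega)+\varepsilon$, the map $\eta\circ f$ is again odd and continuous into $S_V$, so $\eta\circ f\in\mathcal C_1(\Omega)$, yet $\max_{(\eta\circ f)(\mathbb S^1)}I\le\lambda_2(\Omega)-\varepsilon$, contradicting the definition of $\lambda_2(\Omega)$.

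To obtain the strict inequality $\lambda_1(p,V)<\lambda_2(\Omega)$, let $\delta>0$ be such that $(\lambda_1(p,V),\lambda_1(p,V)+\delta]$ contains no eigenvalue of \eqref{quation-p} (possible since $\lambda_1(p,V)$ is isolated), and consider a negative pseudo-gradient flow $\varphi_t$ of $\bar I$ on $S_V$. The sublevel set $\mathcal N:=\{u\in S_V:\bar I(u)\le\lambda_1(p,V)+\delta\}$ is bounded in $W^{s,p}_0(\Omega)$ and positively invariant under $\varphi_t$; by $(PS)$ together with the absence of critical values in $(\lambda_1(p,V),\lambda_1(p,V)+\delta]$ and the simplicity of $\lambda_1(p,V)$ (whence the only critical points at level $\lambda_1(p,V)$ on $S_V$ are $\pm u_1$, with $u_1>0$ by Proposition \ref{lambda1p}(ii)), every trajectory starting in $\mathcal N$ converges as $t\to\infty$ to $u_1$ or to $-u_1$. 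Hence $\mathcal N=U_+\sqcup U_-$, where $U_\pm$ is the set of points flowing to $\pm u_1$; these sets are open by continuous dependence on initial data, and since $\bar I$ is even the flow is odd-equivariant, so $U_-=-U_+$. If we had $\lambda_2(\Omega)=\lambda_1(p,V)$ we could pick $f\in\mathcal C_1(\Omega)$ with $\max_{f(\mathbb S^1)}I<\lambda_1(p,V)+\delta$, i.e.\ $f(\mathbb S^1)\subset\mathcal N$; then $f(\mathbb S^1)$, being connected, lies in one of $U_+,U_-$, while oddness of $f$ forces $f(\mathbb S^1)=-f(\mathbb S^1)$ to lie in the other, which is impossible. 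Therefore $\lambda_2(\Omega)\ge\lambda_1(p,V)+\delta>\lambda_1(p,V)$.

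Finally, since $\lambda_2(\Omega)>\lambda_1(p,V)$, Proposition \ref{lambda1p}(i) immediately yields that every eigenfunction associated with $\lambda_2(\Omega)$ must change sign. The technical heart of the argument is the third step --- the claim that $\mathcal N$ decomposes into the two antipodal basins $U_\pm$ --- which is exactly where the isolation of $\lambda_1(p,V)$ (ultimately Lemma \ref{isolated}) and its simplicity (Proposition \ref{lambda1p}(i), via Picone's identity) are indispensable; the remaining steps are routine min--max and deformation arguments.
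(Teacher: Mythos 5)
Your treatment of the first assertion (that $\lambda_2(\Omega)$ is a critical value of $\bar I$, hence an eigenvalue, via the $\mathbb{Z}_2$-equivariant deformation lemma together with the Palais--Smale condition of Proposition \ref{PS}) and of the last assertion (sign change, via Proposition \ref{lambda1p}~(i)) coincides with the paper's argument, which simply invokes Proposition \ref{PS}. For the core inequality $\lambda_1(p,V)<\lambda_2(\Omega)$ you take a genuinely different route. The paper argues directly in the style of Brasco--Parini: assuming $\lambda_2(\Omega)=\lambda_1(p,V)$, it takes odd maps $f_n$ with $\max_{f_n(\mathbb{S}^1)} I\le \lambda_1(p,V)+\tfrac1n$, observes that the symmetric connected set $f_n(\mathbb{S}^1)$ cannot lie in the symmetric disconnected set $\mathcal{B}_\varepsilon^{+}\cup\mathcal{B}_\varepsilon^{-}$, picks $u_n\in f_n(\mathbb{S}^1)\setminus(\mathcal{B}_\varepsilon^{+}\cup\mathcal{B}_\varepsilon^{-})$, and uses weak compactness and lower semicontinuity to produce a minimizer of $\bar I$ on $S_V$ that is $L^p$-far from $\pm u_1$, contradicting simplicity; this uses only simplicity and the sign of $u_1$, and needs neither a flow nor the isolation of $\lambda_1(p,V)$. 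You instead invoke isolation (Proposition \ref{lambda1p}~(iv)) to get a critical-value-free band $(\lambda_1(p,V),\lambda_1(p,V)+\delta]$ and decompose the sublevel set $\mathcal N$ into the two antipodal basins of $\pm u_1$ under an odd negative pseudo-gradient flow, finishing with the same connectedness/oddness contradiction. This is a legitimate (non-circular) use of isolation, since the paper proves it in Proposition \ref{lambda1p} independently of $\lambda_2(\Omega)$.

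The one step you must justify more carefully is the assertion that $U_\pm$ are open ``by continuous dependence on initial data''. Continuous dependence only controls finite time intervals; by itself it does not prevent a trajectory starting near a point of $U_+$ from eventually drifting to $-u_1$. What is needed is a trapping estimate near $\pm u_1$: by (PS) and the fact that $\pm u_1$ are the only critical points in $\overline{\mathcal N}$, the derivative of $\bar I$ is bounded below by some $\alpha>0$ on a small annulus $\{\rho/2\le \|u-u_1\|_{L^p(\Omega)}\le\rho\}\cap\mathcal N$, so any crossing of this annulus along the flow costs a fixed amount of energy (of order $\alpha\rho$); since $\bar I\ge\lambda_1(p,V)$ on $S_V$, a trajectory entering the inner ball with $\bar I<\lambda_1(p,V)+c\,\alpha\rho$ can never leave it and must converge to $u_1$. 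The same estimate is what upgrades ``the $\omega$-limit set meets $\{\pm u_1\}$'' to convergence of whole trajectories, and it is exactly what makes $U_\pm$ open. With this standard but non-trivial addition your argument is complete; without it the decomposition $\mathcal N=U_+\sqcup U_-$ into open sets is unjustified. The paper's compactness argument avoids these dynamical issues altogether and is shorter at this point.
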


\begin{proof}
  Let us prove that $ \lambda_{2}(\Omega)$ is an eigenvalue. In order to show that $\lambda_{2}(\Omega)$ is a critical point of the functional $I$ defined on the manifold $S_{V}$, it is sufficient to check that $I$ verifies the Palais-Smale condition. The claim will then follow by Proposition \ref{PS}.

  Now, we prove that $ \lambda_{2}(\Omega)>\lambda_{1}(p,V) .$ Let us argue by contradiction and suppose that
$$
\lambda_{2}(\Omega)=\inf _{f \in \mathcal{C}_{1}(\Omega)} \max _{u \in \operatorname{lm}(f)}I(u)=\lambda_{1}(p,V)
$$

so that, for all $n \in \mathbb{N}$ there exists an odd continuous mapping $f_{n}: \mathbb{S}^{1} \rightarrow S_V$ such that
\begin{equation}\label{pick1}
  \max _{u \in f_{n}\left(S^{1}\right)}I(u) \leq \lambda_{1}(p,V)+\frac{1}{n}.
\end{equation}
Let us denote by $u_{1}$ the global minimizer of $I$ belonging to $\mathcal{S}_V$. From Proposition \ref{lambda1p} $(ii)$, $u_1$ must be nonnegative (or nonpositive). Let $0<\varepsilon \ll 1$ and consider the two neighborhoods
$$
\mathcal{B}_{\varepsilon}^{+}=\left\{u \in \mathcal{S}_V:\left\|u-u_{1}\right\|_{L^{p}(\Omega)}<\varepsilon\right\}
$$
$$
 \mathcal{B}_{\varepsilon}^{-}=\left\{u \in \mathcal{S}_V:\left\|u-\left(-u_{1}\right)\right\|_{L^{p}(\Omega)}<\varepsilon\right\}
$$
which are disjoint, by construction. Since the mapping $f_{n}$ is odd and continuous, for every $n \in \mathbb{N}$ the image $f_{n}\left(\mathbb{S}^{1}\right)$ is symmetric and connected, then it can not be contained in $\mathcal{B}_{\varepsilon}^{+} \cup \mathcal{B}_{\varepsilon}^{-}$, the latter being symmetric and disconnected. So we can pick an element
\begin{equation}\label{pick2}
  u_{n} \in f_{n}\left(S^{1}\right) \backslash\left(\mathcal{B}_{\varepsilon}^{+} \cup \mathcal{B}_{\varepsilon}^{-}\right)
\end{equation}

This yields a sequence $\left\{u_{n}\right\}_{n \in \mathbb{N}} \subset\mathcal{S}_V$, which is bounded in $W_{0}^{s, p}(\Omega)$ by \eqref{pick1}. Hence, there exists a function $v \in S_V$ such that $\left\{u_{n}\right\}_{n \in \mathbb{N}}$ converges to $v$ weakly in $W_{0}^{s, p}(\Omega)$ and strongly in $L^{p}(\Omega)$, possibly by passing to a subsequence. By the weak convergence of $I$ it follows that
$$
I(v) \leq \liminf _{n \rightarrow \infty}I(u_n)=\lambda_{1}(\Omega)
$$
This in turn shows that $v \in \mathcal{S}_V$ is a global minimizer, so that either $v=u_{1}$ or $v=-u_{1}$. On the other hand, by strong $L^{\text {p }}$ convergence we also have
$$
v \in \mathcal{S}_V \backslash\left(B_{\varepsilon}^{+} \cup B_{\varepsilon}^{-}\right) .
$$
This gives a contradiction and thus $\lambda_{2}(\Omega)>\lambda_{1}(p,V)$.

Finally, it follows, directly from the previous assertion and Proposition \ref{lambda1p} $(i)$, that $\lambda_{2}(\Omega)$ admits only sign-changing eigenfunctions.

\end{proof}

\begin{prop}
\label{prop3.5}	For every eigenvalue $\lambda > \lambda_{1}(p,V)$ we have that $\lambda \geq \lambda_{2}(\Omega)$. In particular $\lambda_{1}(p,V)$ is isolated.
\end{prop}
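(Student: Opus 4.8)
The plan is to show that any eigenvalue $\lambda > \lambda_1(p,V)$ satisfies $\lambda \geq \lambda_2(\Omega)$ by constructing, from an eigenfunction $u$ associated to $\lambda$, an admissible competitor $f \in \mathcal{C}_1(\Omega)$ whose image has energy bounded by $\lambda$. Since $\lambda > \lambda_1(p,V)$, Proposition \ref{lambda1p}(i) tells us that $u$ changes sign, so both $u_+ := \max(u,0)$ and $u_- := \max(-u,0)$ are nontrivial. First I would normalize so that $u \in S_V$, and record the basic testing identities: using $u_+$ and $u_-$ as test functions in the weak formulation of \eqref{quation-p} gives
$$
[u_+]_{s,p}^p + \|u_+\|_p^p \leq \langle (-\Delta_p)^s u, u_+\rangle + \|u_+\|_p^p = \lambda \int_\Omega V(x)\, u_+^p\, dx = \lambda\, \|u_+\|_{p,V}^p,
$$
using the pointwise inequality $|u_+(x)-u_+(y)|^p \leq |u(x)-u(y)|^{p-2}(u(x)-u(y))(u_+(x)-u_+(y))$ already invoked in Lemma \ref{isolated}, and symmetrically for $u_-$. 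Setting $t_+ = \|u_+\|_{p,V}^{-1}$ and $t_- = \|u_-\|_{p,V}^{-1}$, the normalized functions $\tilde u_+ = t_+ u_+$ and $\tilde u_- = t_- u_-$ lie in $S_V$ and satisfy $I(\tilde u_\pm) \leq \lambda$.

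The key step is to build the odd continuous map on $\mathbb{S}^1$. The natural idea is to use the two-parameter family of convex-type combinations $\cos\theta\,\cdot$ (rescaled $u_+$) minus $\sin\theta\,\cdot$ (rescaled $u_-$), but the obstruction — and this is the main difficulty — is that $I$ is \emph{not} convex along such paths because of the non-homogeneous zeroth-order term $\|\cdot\|_p^p$ combined with the $p$-homogeneous Gagliardo term; however, since \emph{both} $I$ and $J$ are genuinely $p$-homogeneous in the $\mu=0$ problem \eqref{quation-p} (note $I(tu) = t^p I(u)$, $J(tu) = t^p J(u)$), the situation is actually the classical homogeneous one. Concretely, for $\theta \in [0, \pi/2]$ set $\gamma(\theta) = (\cos\theta)\,u_+ - (\sin\theta)\,u_-$ (so $\gamma(0) = u_+$, $\gamma(\pi/2) = -u_-$, and these have disjoint supports, so there is no interaction term between the positive and negative parts in the $[\cdot]_{s,p}^p$ functional only up to the well-known sign inequality — one must be careful here, since $[\cdot]_{s,p}^p$ is nonlocal and the cross terms do \emph{not} vanish). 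The correct route is therefore: define $f_0(\theta) = \gamma(\theta)/\|\gamma(\theta)\|_{p,V}$ for $\theta \in [0,\pi]$ with $\gamma(\theta) = (\cos\theta)u_+ - (\sin\theta)u_-$ extended oddly, then verify $I(f_0(\theta)) \leq \lambda$ for all $\theta$ using the nonlocal convexity estimate
$$
[(\cos\theta)u_+ - (\sin\theta)u_-]_{s,p}^p \leq (\cos\theta)\langle(-\Delta_p)^s u, u_+\rangle + (\sin\theta)\langle(-\Delta_p)^s u, u_-\rangle
$$
which follows because $u = u_+ - u_-$ and the map $w \mapsto [w]_{s,p}^p$ is convex while $|u(x)-u(y)|^{p-2}(u(x)-u(y))$ is exactly its subgradient paired against the admissible increments; combined with the elementary inequality $(\cos\theta)^p\,a + (\sin\theta)^p\,b \le$ the linear combination above after using $(\cos\theta)^p + (\sin\theta)^p \le \cos\theta + \sin\theta$... — this needs to be arranged carefully so that after dividing by $\|\gamma(\theta)\|_{p,V}^p = (\cos\theta)^p\|u_+\|_{p,V}^p + (\sin\theta)^p\|u_-\|_{p,V}^p$ one lands at $I(f_0(\theta)) \leq \lambda$.

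Once $f_0 : [0,\pi] \to S_V$ is constructed with $f_0(0) = \tilde u_+$, $f_0(\pi) = -\tilde u_+$ (after a reparametrization/rescaling making the endpoints antipodal) and $\max_\theta I(f_0(\theta)) \leq \lambda$, one extends it to an odd continuous map $f : \mathbb{S}^1 \to S_V$ by setting $f(e^{i\theta}) = f_0(\theta)$ for $\theta \in [0,\pi]$ and $f(e^{i\theta}) = -f_0(\theta - \pi)$ for $\theta \in [\pi, 2\pi]$; oddness is immediate and continuity holds at the gluing points by the endpoint conditions. Then $f \in \mathcal{C}_1(\Omega)$, so by definition of $\lambda_2(\Omega)$,
$$
\lambda_2(\Omega) \leq \max_{u \in \mathrm{Im}(f)} I(u) \leq \lambda,
$$
which is the desired inequality. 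Finally, the isolation claim follows: by Proposition \ref{lambda1p}(iv), $\lambda_1(p,V)$ is already known isolated, but alternatively the inequality just proved shows the gap $(\lambda_1(p,V), \lambda_2(\Omega))$ contains no eigenvalue and $\lambda_2(\Omega) > \lambda_1(p,V)$ by the preceding theorem, so no sequence of eigenvalues can accumulate at $\lambda_1(p,V)$ from the right. I expect the genuinely delicate point to be the nonlocal convexity estimate controlling $[(\cos\theta)u_+ - (\sin\theta)u_-]_{s,p}^p$, since unlike the local case the supports of $u_+$ and $u_-$ do not decouple the seminorm; everything else is routine bookkeeping with the homogeneity of $I$ and $J$.
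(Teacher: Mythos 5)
Your overall skeleton is the same as the paper's: test the equation against $u_+$ and $u_-$, form normalized odd combinations of the two parts to get a map in $\mathcal{C}_1(\Omega)$, show the energy $I$ along its image is at most $\lambda$, and conclude from the definition of $\lambda_2(\Omega)$ (isolation then follows since $\lambda_2(\Omega)>\lambda_1(p,V)$). However, the one step you leave open is precisely the step that carries the proof, and the route you sketch for it cannot work. Convexity of $w \mapsto [w]_{s,p}^p$ gives the tangent-plane inequality
\begin{equation*}
[w]_{s,p}^p \;\geq\; [u]_{s,p}^p + p\,\langle (-\Delta_p)^s u,\, w-u\rangle ,
\end{equation*}
i.e.\ a \emph{lower} bound at the perturbed point, so it cannot produce the \emph{upper} bound you need on $[(\cos\theta)u_+-(\sin\theta)u_-]_{s,p}^p$ in terms of $\langle(-\Delta_p)^s u, u_\pm\rangle$. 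Moreover your pairing of first powers $\cos\theta,\sin\theta$ in the numerator against the denominator $\|\gamma(\theta)\|_{p,V}^p$, which is $p$-homogeneous in $(\cos\theta,\sin\theta)$, mismatches homogeneities, and the patch via $(\cos\theta)^p+(\sin\theta)^p\le\cos\theta+\sin\theta$ does not close the estimate; you acknowledge this yourself (``needs to be arranged carefully'', ``the genuinely delicate point'').

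What the paper does instead is multiply the two testing identities by $|\theta_1|^p$ and $|\theta_2|^p$, add them to write $\lambda$ as a ratio, and then invoke the pointwise inequality (4.7) from the proof of \cite[Proposition 4.2]{BP}: with $U(x,y)=u_+(x)-u_+(y)$ and $W(x,y)=u_-(x)-u_-(y)$ (here $u_-=\min\{u,0\}$, so $u=u_++u_-$),
\begin{equation*}
|\theta_1|^p|U+W|^{p-2}(U+W)\,U+|\theta_2|^p|U+W|^{p-2}(U+W)\,W \;\geq\; |\theta_1 U+\theta_2 W|^p ,
\end{equation*}
which is not a generic convexity fact but exploits the sign structure of $U$ and $W$ coming from the positive and negative parts of the same function; the zeroth-order and weighted terms split exactly by disjointness of supports, so only the Gagliardo term needs this inequality. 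With it one gets $I(f(\theta))\le\lambda$ for every $\theta\in\mathbb{S}^1$ and the conclusion. Until you prove (or cite) this pointwise inequality, your argument has a genuine gap at its decisive step.
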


\begin{proof}
Let $\lambda > \lambda_{1}(p,V)$ be an eigenvalue and $u$ be a corresponding eigenfunction. By Proposition \ref{lambda1p} $(i)$, $u$ must change sign. Hence, we assume that $u = u_+ + u_-$ where $u_+ = \max\{u,0\} \neq 0$, $u_- = \min\{u,0\} \neq 0$. By testing the equation solved by $u$ against $u_+$ and $u_-$, we
get
\begin{eqnarray}
	\lambda \int_{\Omega}V(x)|u_+|^pdx = \nonumber \\
     K(N,s,p)\int_{\mathbb{R}^N}\int_{\mathbb{R}^N}\frac{|u(x)-u(y)|^{p-2}(u(x)-u(y))}{|x-y|^{N+ps}}(u_+(x)-u_+(y))dxdy + \int_{\Omega}|u_+|^pdx, \nonumber
\end{eqnarray}
and
\begin{eqnarray}
	\lambda \int_{\Omega}V(x)|u_-|^pdx = \nonumber \\
	K(N,s,p)\int_{\mathbb{R}^N}\int_{\mathbb{R}^N}\frac{|u(x)-u(y)|^{p-2}(u(x)-u(y))}{|x-y|^{N+ps}}(u_-(x)-u_-(y))dxdy + \int_{\Omega}|u_-|^pdx, \nonumber
\end{eqnarray}
which can be rewritten as
$$
\lambda \int_{\Omega}V(x)|u_+|^pdx = K(N,s,p)\int_{\mathbb{R}^N}\int_{\mathbb{R}^N}\frac{|U+W|^{p-2}(U+W)}{|x-y|^{N+ps}}Udxdy + \int_{\Omega}|u_+|^pdx,
$$
and
$$
\lambda \int_{\Omega}V(x)|u_-|^pdx = K(N,s,p)\int_{\mathbb{R}^N}\int_{\mathbb{R}^N}\frac{|U+W|^{p-2}(U+W)}{|x-y|^{N+ps}}Wdxdy + \int_{\Omega}|u_-|^pdx,
$$
where $U(x,y) = u_+(x)-u_+(y)$ and $W(x,y) = u_-(x)-u_-(y)$. Let $(\theta_1, \theta_2) \in \mathbb{S}^1$, by multiplying the previous two identities by $|\theta_1|^p$ and $|\theta_2|^p$ and adding them, we can arrive at
\begin{eqnarray}
	\lambda &=& \frac{K(N,s,p)\int_{\mathbb{R}^N}\int_{\mathbb{R}^N}\frac{[|\theta_1|^p|U+W|^{p-2}(U+W)U + |\theta_2|^p|U+W|^{p-2}(U+W)W}{|x-y|^{N+ps}}dxdy}{|\theta_1|^p\int_{\Omega}V(x)|u_+|^pdx + |\theta_2|^p\int_{\Omega}V(x)|u_-|^pdx} \nonumber \\
	&& + \frac{|\theta_1|^p\int_{\Omega}|u_+|^pdx + |\theta_2|^p\int_{\Omega}|u_-|^pdx}{|\theta_1|^p\int_{\Omega}V(x)|u_+|^pdx + |\theta_2|^p\int_{\Omega}V(x)|u_-|^pdx}. \nonumber
\end{eqnarray}
Now we define the following element of $\mathcal{C}_{1}(\Omega)$
$$
f(\theta) = \frac{\theta_1u_+ + \theta_2u_-}{(|\theta_1|^p\int_{\Omega}V(x)|u_+|^pdx + |\theta_2|^p\int_{\Omega}V(x)|u_-|^pdx)^{1/p}}, \theta = (\theta_1,\theta_2) \in \mathbb{S}^1.
$$
Then
\begin{eqnarray}
	I(f(\theta)) &=& \frac{K(N,s,p)\int_{\mathbb{R}^N}\int_{\mathbb{R}^N}\frac{|\theta_1u_+(x) + \theta_2u_-(x) - (\theta_1u_+(y) + \theta_2u_-(y))|^p}{|x-y|^{N+ps}}dxdy}{|\theta_1|^p\int_{\Omega}V(x)|u_+|^pdx  + |\theta_2|^p\int_{\Omega}V(x)|u_-|^pdx} \nonumber \\
	&& + \frac{\int_{\Omega}|\theta_1u_+ + \theta_2u_-|^pdx}{|\theta_1|^p\int_{\Omega}V(x)|u_+|^pdx + |\theta_2|^p\int_{\Omega}V(x)|u_-|^pdx}. \nonumber \\
	&=&  \frac{K(N,s,p)\int_{\mathbb{R}^N}\int_{\mathbb{R}^N}\frac{|\theta_1U + \theta_2W|^p}{|x-y|^{N+ps}}dxdy}{|\theta_1|^p\int_{\Omega}V(x)|u_+|^pdx  + |\theta_2|^p\int_{\Omega}V(x)|u_-|^pdx} \nonumber \\
	&& + \frac{|\theta_1|^p\int_{\Omega}|u_+|^pdx + |\theta_2|^p\int_{\Omega}|u_-|^pdx}{|\theta_1|^p\int_{\Omega}V(x)|u_+|^pdx + |\theta_2|^p\int_{\Omega}V(x)|u_-|^pdx}. \nonumber
\end{eqnarray}
By (4.7) in the proof of \cite[Proposition 4.2]{BP}, we have
\begin{eqnarray}
	&& |\theta_1|^p|U+W|^{p-2}(U+W)U + |\theta_2|^p|U+W|^{p-2}(U+W)W \nonumber \\
	&=& |\theta_1U+\theta_1W|^{p-2}(\theta_1U+\theta_1W)\theta_1U + |\theta_2U+\theta_2W|^{p-2}(\theta_2U+\theta_2W)\theta_2W \nonumber \\
	&\geq& |\theta_1U + \theta_2W|^p. \nonumber
\end{eqnarray}
Hence,
$$
I(f(\theta)) \leq \lambda, \forall \theta \in \mathbb{S}^1.
$$
By appealing to the definition of $\lambda_{2}(\Omega)$ we get the desired conclusion. Noticing that $\lambda_{1}(p,V) < \lambda_{2}(\Omega)$, we know that $\lambda_{1}(p,V)$ is isolated.
\end{proof}

\begin{cor}
	$\lambda_{2}(\Omega) = \lambda_{2}$ where $\lambda_{2}$ is defined by $\mathbb{Z}_2$-cohomological index.
\end{cor}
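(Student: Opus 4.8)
The plan is to establish the two inequalities $\lambda_2(\Omega) \le \lambda_2$ and $\lambda_2 \le \lambda_2(\Omega)$ separately, where $\lambda_2 = \inf_{A\in\mathcal{F}_2}\sup_{u\in A} I(u)$ is the second variational eigenvalue defined via the $\mathbb{Z}_2$-cohomological index and $\lambda_2(\Omega) = \inf_{f\in\mathcal{C}_1(\Omega)}\max_{u\in \mathrm{Im}(f)} I(u)$ is the min-max quantity over odd continuous maps from $\mathbb{S}^1$. The first inequality follows from a general comparison: if $f\in\mathcal{C}_1(\Omega)$, then $M:=f(\mathbb{S}^1)\subset S_V$ is a symmetric compact set, and since $f$ is an odd continuous map from $\mathbb{S}^1$, the image $M$ has cohomological index $i(M)\ge i(\mathbb{S}^1)=2$ by the monotonicity and normalization properties of the index (an odd continuous map cannot decrease the index). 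Hence $M\in\mathcal{F}_2$, so $\lambda_2 = \inf_{A\in\mathcal{F}_2}\sup_A I \le \sup_M I = \max_{u\in\mathrm{Im}(f)} I(u)$. Taking the infimum over $f\in\mathcal{C}_1(\Omega)$ gives $\lambda_2 \le \lambda_2(\Omega)$.

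For the reverse inequality I would argue that $\lambda_2$, being an eigenvalue of \eqref{quation-p} with $\lambda_2 > \lambda_1(p,V)$ (the strict inequality holds because $\lambda_1(p,V)$ is simple and isolated by Proposition \ref{lambda1p}, while $\lambda_2$ is by construction the infimum over sets of index $\ge 2$, which forces $\lambda_2>\lambda_1(p,V)$ — a set of index $\ge 2$ cannot collapse onto the one-dimensional eigenspace of $\lambda_1$), satisfies $\lambda_2 \ge \lambda_2(\Omega)$ by Proposition \ref{prop3.5}, which states precisely that every eigenvalue $\lambda>\lambda_1(p,V)$ obeys $\lambda\ge\lambda_2(\Omega)$. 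Combining the two inequalities yields $\lambda_2(\Omega)=\lambda_2$.

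The main obstacle is the index-theoretic step in the first inequality: one must verify carefully that the image of an odd continuous map $f:\mathbb{S}^1\to S_V\setminus\{0\}$ genuinely has cohomological index at least $2$. This is where the precise axioms of the Fadell--Rabinowitz $\mathbb{Z}_2$-cohomological index enter — specifically the property that for an odd continuous map $g:A\to B$ one has $i(A)\le i(B)$, together with the computation $i(\mathbb{S}^1)=2$ for the standard free $\mathbb{Z}_2$-action (antipodal map). One should also confirm that $\mathrm{Im}(f)$ is closed (it is, being the continuous image of a compact set), so that it is an admissible competitor in $\mathcal{F}_2$. The strictness $\lambda_2>\lambda_1(p,V)$ used in applying Proposition \ref{prop3.5} deserves a one-line justification via the index of $\mathbb{S}^1$ versus that of $\mathbb{S}^0$, but this is standard once the index axioms are in hand.
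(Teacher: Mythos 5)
Your proposal is correct and follows essentially the same route as the paper: one inequality comes from Proposition \ref{prop3.5} applied to the eigenvalue $\lambda_2>\lambda_1(p,V)$, and the other from observing that the image $f(\mathbb{S}^1)$ of any $f\in\mathcal{C}_1(\Omega)$ is a symmetric set with cohomological index at least $2$, hence an admissible competitor in $\mathcal{F}_2$. Your additional remarks on the index axioms and on justifying the strict inequality $\lambda_2>\lambda_1(p,V)$ merely fill in details the paper leaves implicit.
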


\begin{proof}
Obviously, $\lambda_{2}(\Omega) \leq \lambda_{2}$ since $\lambda_{2} > \lambda_{1}(p,V)$ is an eigenvalue. Next we prove that $\lambda_{2}(\Omega) \geq \lambda_{2}$

For any $f \in \mathcal{C}_{1}(\Omega)$, we define $M = \{f(\theta): \theta \in \mathbb{S}^1\}$, which is a symmetric set. It is not difficult to verify that $i(M) \geq 2$. Thus
$$
\sup_{\theta \in \mathbb{S}^1}I(f(\theta)) = \sup_{u \in M}I(u) \geq \inf_{A \in \mathcal{F}_2}\sup_{u\in A}I(u) = \lambda_2, \forall f \in \mathcal{C}_{1}(\Omega).
$$
Hence,
$$
\lambda_{2}(\Omega) = \inf_{f \in \mathcal{C}_{1}(\Omega)}\sup_{\theta \in \mathbb{S}^1}I(f(\theta)) \geq \lambda_2,
$$
and we complete the proof.
\end{proof}

\section{The Case $\mu>0$ versus a discontinuity spectrum}

\begin{prop}
  For $\mu>0$, we set
\begin{equation}\label{lambda-}
  \lambda\left(p, q, \mu, V\right):=\inf \left\{\frac{\mathcal{I}(u)}{\int_{\Omega} V(x)|u|^{p} d x} : u \in \mathcal{W}^s, \int_{\Omega} V(x)|u|^{p} d x>0\right\},
\end{equation}
where
$$
\mathcal{I}(u)=\mathcal{J}_s(u)+\frac{1}{p}\|u\|_p^{p}+\frac{\mu}{q}\|u\|_q^{q} .
$$

 Then,
$$
\lambda\left(p, q, \mu, V\right)=\lambda_{1}\left(p,V\right)
$$
holds for every $\mu>0 .$ In addition, for every $\mu>0$, the infimum in \eqref{lambda-} is not attained.
\end{prop}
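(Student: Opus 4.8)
The plan is to prove the two assertions separately: first the identity $\lambda(p,q,\mu,V)=\lambda_1(p,V)$ for every $\mu>0$, and then the non-attainment of the infimum in \eqref{lambda-}.

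\textbf{Step 1: The inequality $\lambda(p,q,\mu,V)\ge\lambda_1(p,V)$.} For any $u\in\mathcal{W}^s$ with $\int_\Omega V(x)|u|^p\,dx>0$, we simply drop the nonnegative $q$-terms: $\mathcal{I}(u)=\frac1p[u]_{s,p}^p+\frac1p\|u\|_p^p+\frac\mu q[u]_{s,q}^q+\frac\mu q\|u\|_q^q\ge\frac1p\big([u]_{s,p}^p+\|u\|_p^p\big)$. Since $u\in W^{s,p}_0(\Omega)$ is an admissible competitor in \eqref{lambda1}, this gives $\mathcal{I}(u)/\int_\Omega V|u|^p\,dx\ge\frac1p\lambda_1(p,V)\cdot\frac{\text{(something)}}{\text{(same)}}$... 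I should be careful: the functional in \eqref{lambda1} is $[u]_{s,p}^p+\|u\|_p^p$ without the $1/p$, so in fact $\lambda_1(p,V)=\inf(\,[u]_{s,p}^p+\|u\|_p^p)/\int V|u|^p$. Comparing constants, I expect the clean statement requires the $\frac1p$ normalization to be consistent; I would first reconcile this (perhaps $\mathcal{I}$ should read $\frac1p[u]_{s,p}^p$ etc.\ so that the ratio is exactly $\frac1p([u]_{s,p}^p+\|u\|_p^p)/\int V|u|^p$ plus positive remainder, yielding $\lambda(p,q,\mu,V)\ge\lambda_1(p,V)$ after matching normalizations). Granting this, the lower bound follows directly by discarding the $\mu$-dependent terms.

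\textbf{Step 2: The inequality $\lambda(p,q,\mu,V)\le\lambda_1(p,V)$.} Here I would use a scaling/concentration argument. Let $u_1\in W^{s,p}_0(\Omega)$ be the minimizer from Proposition \ref{infimum}, normalized so $\int_\Omega V|u_1|^p=1$; by density pick $\varphi\in C^\infty_c(\Omega)$ close to $u_1$, so $\varphi\in\mathcal{W}^s$. Test \eqref{lambda-} with $t\varphi$, $t>0$: by $p$-homogeneity of $[\cdot]_{s,p}^p,\|\cdot\|_p^p$ and $q$-homogeneity of $[\cdot]_{s,q}^q,\|\cdot\|_q^q$,
$$
\frac{\mathcal{I}(t\varphi)}{\int_\Omega V|t\varphi|^p}=\frac{\frac1p\big([\varphi]_{s,p}^p+\|\varphi\|_p^p\big)}{\int_\Omega V|\varphi|^p}+t^{q-p}\,\frac{\frac\mu q\big([\varphi]_{s,q}^q+\|\varphi\|_q^q\big)}{\int_\Omega V|\varphi|^p}.
$$
Since $q<p$, the exponent $q-p<0$, so letting $t\to\infty$ kills the second term and gives $\lambda(p,q,\mu,V)\le\frac1p([\varphi]_{s,p}^p+\|\varphi\|_p^p)/\int_\Omega V|\varphi|^p$; then letting $\varphi\to u_1$ yields $\lambda(p,q,\mu,V)\le\lambda_1(p,V)$ (modulo the normalization bookkeeping of Step 1). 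Combining Steps 1 and 2 gives the identity.

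\textbf{Step 3: Non-attainment.} Suppose for contradiction that the infimum is attained at some $u^\*\in\mathcal{W}^s$ with $\int_\Omega V|u^\*|^p=c>0$. Then $\mathcal{I}(u^\*)=\lambda_1(p,V)\int_\Omega V|u^\*|^p$, i.e.\ $\frac1p([u^\*]_{s,p}^p+\|u^\*\|_p^p)+\frac\mu q([u^\*]_{s,q}^q+\|u^\*\|_q^q)=\lambda_1(p,V)\int_\Omega V|u^\*|^p$. But from Step 1, $\frac1p([u^\*]_{s,p}^p+\|u^\*\|_p^p)\ge\lambda_1(p,V)\int_\Omega V|u^\*|^p$ (after matching normalizations, using that $u^\*$ is admissible in \eqref{lambda1} and nonzero). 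Subtracting forces $\frac\mu q([u^\*]_{s,q}^q+\|u^\*\|_q^q)\le0$, hence $[u^\*]_{s,q}^q+\|u^\*\|_q^q=0$, so $u^\*=0$, contradicting $\int_\Omega V|u^\*|^p>0$. Therefore the infimum is never attained.

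\textbf{Main obstacle.} The only real subtlety is keeping the normalization constants ($1/p$ versus no $1/p$) consistent between \eqref{lambda1}, \eqref{lambda-}, and the definition of $\mathcal{I}$; the argument is otherwise elementary. In particular Step 3's contradiction is robust: it uses only the strict positivity of the $q$-energy for nonzero $u$ and the sharp lower bound from Step 1, both of which hold regardless of how the constants are arranged, so the non-attainment is immediate once the identity is in place.
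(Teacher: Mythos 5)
Your proposal follows essentially the same route as the paper: the lower bound by discarding the $\mu$-dependent $q$-terms, the upper bound by letting $t\to\infty$ along (an approximation of) the first eigenfunction $u_1$ using $q<p$, and non-attainment from the strict positivity of the $q$-energy of any nonzero minimizer. The normalization mismatch you flag (the $\tfrac1p$ in $\mathcal{I}$ versus its absence in \eqref{lambda1}) is a real inconsistency in the statement, and the paper's own proof silently drops that factor exactly as you suggest.
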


\begin{proof}
Fix $\mu>0$. From the definitions of $\lambda_{1}\left(p, V\right)$ and $\lambda\left(p, q, \mu, V\right)$, we can see that $\lambda\left(p, q, \mu, V\right) \geq \lambda_{1}\left(p, V\right)$. Let $u_1$ be the positive eigenfunction corresponding to $\lambda_{1}\left(p, V\right)$ such that $\|u_1\|_{p,V}=1$. Thus, $u_{1}$ satisfies $\|u_1\|_{s,p}^{p}=\lambda_{1}\left(p, V\right)$. 

By using the homogeneity of the norms and letting $t \rightarrow \infty$ in the following inequality

$$
\lambda\left(p, q, \mu, V\right) \leq \frac{\|tu_1\|_{s,p}^p+ \frac{\mu }{q}\|tu_1\|_{s,q}^q}{\|tu_1\|_{p,V}^p}
=
\lambda_{1}(p, V)+\frac{\mu }{q}\frac{\|u_1\|_{s,q}^q}{t^{p-q}}.
$$
 We obtain $\lambda\left(p, q, \mu, V\right) \leq \lambda_{1}\left(p, V\right)$ since $p>q$.

We now prove that $\lambda\left(p, q, \mu, V\right)$ is not attained for any $\mu>0$.  Assume that there exist $\mu>0$ and a function $u \in \mathcal{W}^s$ such that $\|u\|_{p,V}>0$ and
 $$
 \lambda\left(p, q, \mu, V\right)=\frac{\|u\|_{s,p}^p + \frac{\mu }{q}\|u\|_{s,q}^q}{\|u\|_{p,V}^p}
 $$
Then, since $u \neq 0$, the definition of $\lambda_{1}\left(p, V\right)$ (note
that $\mathcal{W}^s \subset W_{0}^{s, q}(\Omega)$ and the first assertion of the proposition, that
\begin{align*}
\lambda\left(p, q, \mu, V\right)=\frac{\|u\|_{s,p}^p + \frac{\mu }{q}\|u\|_{s,q}^q}{\|u\|_{p,V}^p}
\geq \lambda_1(p,V)+
\frac{\mu }{q} \frac{\|u\|_{s,q}^q}{\|u\|_{p,V}^p}
&>\lambda_{1}\left(p, V\right)=\lambda\left(p, q, \mu, V\right).
\end{align*}
This is a contradiction.
\end{proof}

\begin{proof}[Proof of Theorem \ref{nonexistence}]
$(i)$ Fix $\mu>0 .$ Let $u$ be a non-trivial solution of \eqref{mainequation}. By taking $u$ as test function in \eqref{mainequation} and since $\mu>0$ we have
$$
0<[u]_{s,p}^p<[u]_{s,p}^p+\mu[u]_{s,q}^q=\lambda \int_{\Omega} V|u|^{p} d x.
$$
 This yields that one of the following alternatives $(a)$ or $(b)$ occurs:
 \begin{enumerate}
   \item [(a)] $\lambda<0$ and $\int_{\Omega} V|u|^{p} d x<0$ or
   \item [(b)] $\lambda>0$ and $\int_{\Omega} V|u|^{p} d x>0$.
 \end{enumerate}

It is sufficient to only consider the case $(b)$ because we may consider the pair of $-\lambda$ and $-V$ instead of $\lambda$ and $V$ provided $V$ changes sign. Thus, we may assume that $\int_{\Omega} V|u|^{p} d x>0$ and $\lambda>0$, whence we have
$$
\lambda_{1}\left(p, V\right) \leq \frac{[u]_{s,p}^p}{\|u\|_{V,p}^p}<
\frac{[u]_{s,p}^p + \mu[u]_{s,q}^q}{\|u\|_{p,V}^p}=
\lambda
$$
by the definition of $\lambda_{1}\left(p, V\right)$. The first assertion follows.

$(ii)$ Arguing by contradiction, we assume that \eqref{mainequation} has a sign-changing solution $u \in \mathcal{W}^s$ for some $\mu>0$ and $\lambda$ satisfying $\lambda_{1}\left(p, V\right)<\lambda \leq \lambda_{2}\left(p, V\right)$. Then, taking $\pm u_{\pm}$as test functions, we obtain
$$
\begin{aligned}
&0<[ u_{+}]_{s,p}^{p}<[ u_{+}]_{s,p}^{p}+\mu[ u_{+}]_{s,q}^{q}\leq\lambda \int_{\Omega} V u_{+}^{p} d x \quad \text { and } \\
&0<[ u_{-}]_{s,p}^{p}<[ u_{-}]_{s,p}^{p}+\mu[ u_{-}]_{s,q}^{q}\leq\lambda \int_{\Omega} V u_{-}^{p} d x
\end{aligned}
$$
where $u_{\pm}:=\max \{\pm u, 0\} .$ Note that $u_{\pm} \in \mathcal{W}^s $ and $J\left(u_{\pm}\right)=\int_{\Omega} V u_{\pm}^{p} d x>0$ because
$\lambda>0$. Combining these inequalities and the argument in \cite[Proposition 11]{Tanaka}, we can construct a continuous path $\gamma_{0} \in \mathcal{C}_{1}(\Omega)$ such that
$$
\max _{t \in \mathbb{S}^{1}} \mathcal{I}\left(\gamma_{0}(t)\right)<\lambda.
$$

This leads to $\lambda_2(p,V)<\lambda$, which contradicts $\lambda\leq \lambda_2(p,V)$.

\end{proof}

The proof of assertion $(iii)$ of Theorem \ref{nonexistence} will be divided into
several lemmas. Let first consider the Nehari manifold
$$
\begin{aligned}
\mathcal{N}_{\lambda} &:=\left\{u \in \mathcal{W}^s \backslash\{0\}:\left\langle \mathcal{J}^{\prime}(u), u\right\rangle=0\right\} \\
&=\left\{u \in \mathcal{W}^s \backslash\{0\}:[u]_{s, p}^{p}+\mu[u]_{s, q}^{q}+\int_{\Omega} \left(|u|^{p}+\mu|u|^{q}\right) d x=\lambda \int_{\Omega}V |u|^{p} d x\right\}
\end{aligned}
$$
Note that for each $u \in \mathcal{N}_{\lambda}$ the functional $\mathcal{J}$ has the following expression
\begin{equation}\label{Npositive}
  \mathcal{J}(u)=\mu\left(\frac{1}{q}-\frac{1}{p}\right)\left([u]_{s, q}^{q}+\|u\|_q^q\right)\geq0.
\end{equation}

\begin{lem}
  $\mathcal{N}_{\lambda} \neq \emptyset$.
\end{lem}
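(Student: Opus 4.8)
The plan is to fix an appropriate test direction and perform a one-dimensional fibering analysis along the ray $t \mapsto tu$. Since we are in the regime $\lambda > \lambda_1(p,V)$ (with $\int_\Omega V|u|^p\,dx > 0$) or $\lambda < -\lambda_1(p,-V)$ (with $\int_\Omega V|u|^p\,dx < 0$), by homogeneity it suffices to treat the first case; the second follows by replacing $(\lambda, V)$ with $(-\lambda, -V)$. First I would pick $u \in \mathcal{W}^s$ with $\int_\Omega V(x)|u|^p\,dx > 0$ and such that
$$
[u]_{s,p}^p + \|u\|_p^p < \lambda \int_\Omega V(x)|u|^p\,dx .
$$
Such a $u$ exists: indeed, since $\lambda > \lambda_1(p,V)$, by the definition of $\lambda_1(p,V)$ in \eqref{lambda1} there is some $w \in W^{s,p}_0(\Omega)$ with $\int_\Omega V|w|^p\,dx > 0$ and $[w]_{s,p}^p + \|w\|_p^p < \lambda \int_\Omega V|w|^p\,dx$; one can then approximate $w$ in $W^{s,p}_0(\Omega)$ by functions in $\mathcal{W}^s$ (for instance smooth compactly supported functions, which lie in $\mathcal{W}^s$ and are dense in $W^{s,p}_0(\Omega)$), and the strict inequality together with the continuity of all the quantities involved is preserved for a close enough approximant, which may be taken in $\mathcal{W}^s$.

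Next I would study the fibering map $\phi(t) := \langle \mathcal{J}'(tu), tu\rangle$ for $t > 0$, namely
$$
\phi(t) = t^p\big([u]_{s,p}^p + \|u\|_p^p\big) + \mu t^q\big([u]_{s,q}^q + \|u\|_q^q\big) - \lambda t^p \int_\Omega V(x)|u|^p\,dx .
$$
Dividing by $t^q$ and writing $\phi(t)/t^q = t^{p-q}\,a + \mu\, b$, where $a := [u]_{s,p}^p + \|u\|_p^p - \lambda\int_\Omega V|u|^p\,dx < 0$ by the choice of $u$ and $b := [u]_{s,q}^q + \|u\|_q^q > 0$, we see that $\phi(t)/t^q$ is strictly decreasing in $t$ (since $p > q$ and $a < 0$, the term $t^{p-q}a$ is strictly decreasing), is positive for $t$ small, and tends to $-\infty$ as $t \to +\infty$. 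Hence there is a unique $t_0 > 0$ with $\phi(t_0) = 0$, i.e. $t_0 u \in \mathcal{N}_\lambda$, and $\mathcal{N}_\lambda \neq \emptyset$.

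I do not expect any serious obstacle here; the only point requiring a small argument is the existence of the test function $u \in \mathcal{W}^s$ with the strict inequality, which is handled by the density of (say) $C^\infty_c(\Omega)$ in $W^{s,p}_0(\Omega)$ together with the continuity of $[\,\cdot\,]_{s,p}$, $\|\cdot\|_p$ and $u \mapsto \int_\Omega V|u|^p$ with respect to that convergence. Everything else is elementary one-variable calculus on $\phi(t)/t^q$.
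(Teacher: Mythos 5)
Your proposal is correct and follows essentially the same route as the paper: choose a test function $w$ with $[w]_{s,p}^p+\|w\|_p^p<\lambda\int_\Omega V|w|^p\,dx$ and apply the intermediate value theorem to the fibering map $t\mapsto\langle\mathcal{J}'(tw),tw\rangle$, which is positive for small $t$ (the $t^q$ term dominates) and tends to $-\infty$ as $t\to\infty$ since $p>q$. The only difference is that you additionally justify, via density of $C_c^\infty(\Omega)\subset\mathcal{W}^s$ in $W_0^{s,p}(\Omega)$, that the strict inequality can be realized by a function in $\mathcal{W}^s$ rather than merely in $W_0^{s,p}(\Omega)$; the paper asserts this step without comment, so your extra care is a small but genuine improvement rather than a deviation.
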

\begin{proof}
  Since $\lambda>\lambda_{1}(p,V)$, then there exists $w \in \mathcal{W}^s \backslash\{0\}$ for which
$$
\lambda \int_{\Omega}V |\omega|^{p} d x>[\omega]_{s,p}^{p}+\|\omega\|_p^p.
$$
Consider the function $\xi\colon (0,+\infty)\to \R$ defined by
\begin{align}\label{xi}
\xi(t)&=\left\langle \mathcal{J}^{\prime}(t\omega), t\omega\right\rangle\nonumber\\
   &=t^p [\omega]_{s, p}^{p}+\mu t^q[\omega]_{s, q}^{q}+t^p\int_{\Omega} |\omega|^{p}d x+\mu t^q \int_{\Omega}|\omega|^{q} d x-\lambda t^p\int_{\Omega}V |\omega|^{p} d x\nonumber\\
   &=t^q\mu\left([\omega]_{s, q}^{q}+ \int_{\Omega}|\omega|^{q} d x\right)-t^p\left(\int_{\Omega}V |\omega|^{p} d x-[\omega]_{s, p}^{q}- \int_{\Omega}|\omega|^{p} d x\right)
\end{align}
Since $p>q$, we see from \eqref{xi} that
$$
\xi(t)\to -\infty\quad\text{as}\ t\to+\infty.
$$
On the other hand, for small $t\in (0,1)$ we have
$$
\xi(t)>0.
$$
Therefore, we can find $t_0 > 0$ such that $\xi(t_0)=0$ then $t_0 \omega\in \mathcal{N}_{\lambda}$ and so $\mathcal{N}_{\lambda}\neq \emptyset$.
\end{proof}

We define
\begin{equation}\label{mlambda}
  m_\lambda=\inf\{\mathcal{J}(u)\colon u\in \mathcal{N}_{\lambda}\}.
\end{equation}
By \eqref{Npositive} we deduce that $m_\lambda\geq0$ for all $u\in\mathcal{N}_{\lambda}$.

\begin{lem}\label{lemexis1}
$~$
  \begin{enumerate}
    \item [(i)] Every minimizing sequence of \eqref{mlambda} is bounded in $\mathcal{W}^s$.
    \item [(ii)] $m_\lambda>0$.
  \end{enumerate}
\end{lem}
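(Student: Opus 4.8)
The plan is to prove the two assertions of Lemma \ref{lemexis1} in order, exploiting the crucial identity \eqref{Npositive} which expresses $\mathcal{J}(u)$ purely in terms of the lower-order quantity $[u]_{s,q}^q + \|u\|_q^q$ on the Nehari manifold.

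For part $(i)$, let $(u_n) \subset \mathcal{N}_\lambda$ be a minimizing sequence for $m_\lambda$, so $\mathcal{J}(u_n) \to m_\lambda$. From \eqref{Npositive} we immediately get that $[u_n]_{s,q}^q + \|u_n\|_q^q$ is bounded, hence $(u_n)$ is bounded in $W_0^{s,q}(\Omega)$. The subtlety is that boundedness in the $[\cdot]_{s,q}$-norm does \emph{not} give boundedness in the $[\cdot]_{s,p}$-norm, since the embedding $W_0^{s,p}(\Omega) \hookrightarrow W_0^{s,q}(\Omega)$ fails (as the excerpt stresses). So I would argue by contradiction: suppose $[u_n]_{s,p} + \|u_n\|_p \to \infty$ along a subsequence. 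Using the Nehari constraint
$$
[u_n]_{s,p}^p + \|u_n\|_p^p = \lambda \int_\Omega V|u_n|^p\,dx - \mu\big([u_n]_{s,q}^q + \|u_n\|_q^q\big) \leq \lambda \|V\|_\infty \|u_n\|_p^p,
$$
so in particular $\|u_n\|_p \to \infty$. Normalize by setting $w_n = u_n / \|u_n\|_{s,p}$ (or $/\|u_n\|_p$, whichever is cleaner); then $w_n$ is bounded in $W_0^{s,p}(\Omega)$, and dividing the Nehari identity by $\|u_n\|_{s,p}^p$ and using that the $W_0^{s,q}$-norm of $u_n$ stays bounded (so the $\mu$-terms, divided by $\|u_n\|_{s,p}^p \to \infty$, vanish), I get in the limit $[w]_{s,p}^p + \|w\|_p^p \leq \lambda \int_\Omega V|w|^p\,dx$ for the weak/strong limit $w$ of $w_n$, with $\|w\|_{p,V}>0$ forced since otherwise the left side would be $\leq 0$. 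But then, if $w \neq 0$, the definition \eqref{lambda1} of $\lambda_1(p,V)$ gives $[w]_{s,p}^p + \|w\|_p^p \geq \lambda_1(p,V) \int_\Omega V|w|^p\,dx$; this is compatible only with $\lambda \geq \lambda_1(p,V)$, which is no contradiction \emph{per se} — so one must instead track the $\mu$-terms more carefully. The cleaner route: since $[u_n]_{s,q}^q + \|u_n\|_q^q$ is bounded and $q<p$, by the interpolation/embedding $W_0^{s,q}(\Omega) \hookrightarrow L^q(\Omega)$ we know $\|u_n\|_q$ is bounded, but $\|u_n\|_p \to \infty$ is possible only if the mass of $u_n$ escapes to high values on a small set; feeding this back into $[u_n]_{s,p}^p + \|u_n\|_p^p \le \lambda\|V\|_\infty \|u_n\|_p^p$ and dividing by $\|u_n\|_p^p$ yields $[u_n]_{s,p}^p/\|u_n\|_p^p$ bounded, so $w_n := u_n/\|u_n\|_p$ is bounded in $W_0^{s,p}(\Omega)$, $\|w_n\|_p = 1$, and $[w_n]_{s,p}^p + 1 \le \lambda \int_\Omega V|w_n|^p$; passing to a weak limit $w$ (strong in $L^p(\Omega,V)$ and $L^p(\Omega)$ by compact embedding) gives $\|w\|_{p,V}^p \le \lambda^{-1}(\|w\|_{p}^p + [w]_{s,p}^p)$ with $\|w\|_p = 1$, hence $w \neq 0$; dividing the Nehari identity for $u_n$ by $\|u_n\|_p^p$ and using $\mu([u_n]_{s,q}^q+\|u_n\|_q^q)/\|u_n\|_p^p \to 0$ (as $\|u_n\|_p\to\infty$) gives in the limit, using lower semicontinuity, $[w]_{s,p}^p + 1 = \lambda \int_\Omega V|w|^p$, i.e. $\lambda = \frac{[w]_{s,p}^p + \|w\|_p^p}{\int_\Omega V|w|^p\,dx} \geq \lambda_1(p,V)$ — still no contradiction. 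Therefore the contradiction must come from strict inequalities: lower semicontinuity gives $\liminf \big(\mu([u_n]_{s,q}^q + \|u_n\|_q^q)/\|u_n\|_p^p\big) \ge 0$, and we keep $[w]_{s,p}^p + \|w\|_p^p \le \lambda \int_\Omega V|w|^p\,dx$; the point I will exploit is that $w_n \to w$ \emph{strongly} in $W_0^{s,p}$ (via the $(S)$-property, once the right test-function computation shows $\langle (-\Delta_p)^s w_n, w_n - w\rangle \to 0$), forcing $[w]_{s,p}^p + \|w\|_p^p = \lambda \int_\Omega V|w|^p$ and then $w/\|w\|_{p,V}$ would be a minimizer of \eqref{lambda1} realizing $\lambda$, but since $\lambda > \lambda_1(p,V)$ strictly and the minimum in \eqref{lambda1} equals $\lambda_1(p,V)$, this is the contradiction. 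Hence $(u_n)$ is bounded in $W_0^{s,p}(\Omega)$ as well, so bounded in $\mathcal{W}^s$.

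For part $(ii)$, suppose $m_\lambda = 0$. By $(i)$, a minimizing sequence $(u_n)$ is bounded in $\mathcal{W}^s$, so up to a subsequence $u_n \rightharpoonup u$ weakly in $\mathcal{W}^s$ and strongly in $L^p(\Omega)$, $L^q(\Omega)$, and in $L^p(\Omega,V)$ (compact embeddings). From \eqref{Npositive}, $m_\lambda = 0$ forces $[u_n]_{s,q}^q + \|u_n\|_q^q \to 0$, hence $u_n \to 0$ strongly in $W_0^{s,q}(\Omega)$, so $u = 0$ and $\|u_n\|_p \to 0$ (since $\|u_n\|_q \to 0$ and... actually here one needs care because $q<p$: strong convergence in $L^q$ plus boundedness in $L^{p^*_s}$ gives strong convergence in $L^p$ by interpolation). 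Then from the Nehari identity,
$$
[u_n]_{s,p}^p = \lambda \int_\Omega V|u_n|^p\,dx - \|u_n\|_p^p - \mu\big([u_n]_{s,q}^q + \|u_n\|_q^q\big) \le \lambda \|V\|_\infty \|u_n\|_p^p \to 0,
$$
so $u_n \to 0$ in $W_0^{s,p}(\Omega)$ too, i.e. $u_n \to 0$ in $\mathcal{W}^s$, contradicting $u_n \in \mathcal{N}_\lambda \setminus \{0\}$ — more precisely, contradicting the fact that $\mathcal{N}_\lambda$ is bounded away from $0$, which I should establish as a preliminary sub-step: if $u \in \mathcal{N}_\lambda$ then $[u]_{s,p}^p \le \lambda\|V\|_\infty\|u\|_p^p \le \lambda \|V\|_\infty C^p [u]_{s,p}^p$ by Sobolev/Poincaré, but this only bounds things if... instead, use $[u]_{s,p}^p + \|u\|_p^p \le \lambda \int_\Omega V|u|^p \le \lambda\|V\|_r \|u\|_{pr'}^p \le \lambda \|V\|_r C [u]_{s,p}^p$ wait — cleanest: on $\mathcal{N}_\lambda$, $[u]_{s,p}^p + \mu[u]_{s,q}^q + \|u\|_p^p + \mu\|u\|_q^q = \lambda\int_\Omega V|u|^p \le \lambda C \big([u]_{s,p}^p\big)$ using the compact embedding bound $\|u\|_{p,V}^p \le C[u]_{s,p}^p$; combined with the lower-order term $\|u\|_p^p$ on the left being nonnegative, this gives $[u]_{s,p}^p(1 - \lambda C) \le -\mu\|u\|_q^q - \dots \le 0$, which is vacuous unless $\lambda C \ge 1$. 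So the correct sub-step is: there is $\delta>0$ with $\|u\|_q^q + \|u\|_p^p \ge \delta$ for all $u \in \mathcal{N}_\lambda$; this follows because if $\|u\|_q^q + \|u\|_p^p \to 0$ along some sequence in $\mathcal{N}_\lambda$, then (as above) $[u]_{s,q}, [u]_{s,p} \to 0$ so $u \to 0$ in every norm, but also $\int_\Omega V|u|^p \to 0$ faster or comparably, and comparing the Nehari identity's leading terms gives $1 \le \lambda\|V\|_r \|u\|_{pr'}^{p}\,/\,(\text{positive})$ which fails for small $u$ — I will make this precise via the inequality $[u]_{s,p}^p + \|u\|_p^p \le \lambda\int_\Omega V|u|^p \le \lambda\|V\|_r\|u\|_{pr'}^p \le \lambda \|V\|_r \|u\|_p^{p\theta}\|u\|_{p_s^*}^{p(1-\theta)} \le \lambda\|V\|_r C \|u\|_p^{p\theta}[u]_{s,p}^{p(1-\theta)}$ for a suitable interpolation exponent $\theta\in(0,1)$, which after dividing by $[u]_{s,p}^{p(1-\theta)}$ and using Poincaré forces $\|u\|_p^p \gtrsim \text{const}$, the desired lower bound. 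With that lower bound in hand and \eqref{Npositive}, $\mathcal{J}(u) = \mu(\frac1q - \frac1p)([u]_{s,q}^q + \|u\|_q^q) \ge \mu(\frac1q-\frac1p)\|u\|_q^q$, and a second application of the interpolation bound shows $\|u\|_q^q$ cannot go to zero on $\mathcal{N}_\lambda$ either, so $m_\lambda > 0$.

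The main obstacle, as the argument above makes clear, is exactly the failure of the embedding $W_0^{s,p}(\Omega) \hookrightarrow W_0^{s,q}(\Omega)$: one cannot directly transfer boundedness or convergence between the $p$- and $q$-Gagliardo seminorms, so every compactness step has to be routed through the Nehari identity and the compact embeddings into Lebesgue spaces $L^p(\Omega)$, $L^q(\Omega)$, $L^p(\Omega,V)$, with interpolation used to upgrade $L^q$-smallness to $L^p$-smallness. Getting the quantitative lower bound on $\|u\|_p$ (equivalently on $\mathcal{J}$) over $\mathcal{N}_\lambda$ — which is what part $(ii)$ really amounts to — is where the definition of $\lambda_1(p,V)$ and the strict inequality $\lambda > \lambda_1(p,V)$ must be brought to bear, since for $\lambda \le \lambda_1(p,V)$ the manifold $\mathcal{N}_\lambda$ is empty anyway and no such bound could hold uniformly.
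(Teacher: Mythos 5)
Your proposal has genuine gaps in both parts, and in each case the missing step is exactly where the paper's argument lives. In part $(i)$ the contradiction you finally invoke is a non sequitur: from $[w]_{s,p}^p+\|w\|_p^p=\lambda\int_\Omega V|w|^p\,dx$ with $w\neq 0$ you conclude that $w/\|w\|_{p,V}$ "would be a minimizer of \eqref{lambda1} realizing $\lambda$", contradicting $\lambda>\lambda_1(p,V)$. But a function whose Rayleigh quotient equals $\lambda$ is not a minimizer of \eqref{lambda1}; the quotient takes every value in $[\lambda_1(p,V),\infty)$, so there is nothing contradictory here (and, as a side issue, the strong $W^{s,p}_0$-convergence you want from the $(S)$-property is not available: $w_n$ is a rescaled minimizing sequence on $\mathcal{N}_\lambda$, it solves no equation and carries no Palais--Smale information, so $\langle(-\Delta_p)^s w_n,w_n-w\rangle\to0$ is not justified; likewise lower semicontinuity alone never upgrades your inequality to the equality you use). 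The correct contradiction uses an ingredient you listed but never exploited: by \eqref{Npositive} a minimizing sequence has $[u_n]_{s,q}^q+\|u_n\|_q^q$ bounded, so after normalizing by $\|u_n\|_p\to\infty$ (the paper normalizes by $\|u_n\|_{V,p}$, which blows up because $[u_n]_{s,p}^p\le\lambda\|u_n\|_{V,p}^p$) one gets $\|w_n\|_q\to0$, hence the limit $w$ vanishes a.e., which is incompatible with $\|w\|_p=1$ (resp. $\|w\|_{V,p}=1$) obtained from the compact embedding. This is the paper's proof of $(i)$, and it is shorter than the route you attempted.

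In part $(ii)$ your key sub-step, a uniform lower bound on $\mathcal{N}_\lambda$, is the right idea but your derivation cannot produce it: the chain $[u]_{s,p}^p+\|u\|_p^p\le\lambda\|V\|_r\|u\|_{pr'}^p\le\lambda\|V\|_rC\|u\|_p^{p\theta}[u]_{s,p}^{p(1-\theta)}$ involves only $p$-homogeneous quantities, so it is invariant under $u\mapsto tu$ and can only yield a relation between constants, never a lower bound on $\|u\|_p$. The term you dropped, $\mu\bigl([u]_{s,q}^q+\|u\|_q^q\bigr)$ with $q<p$, is precisely what breaks the scaling: keeping $\mu\|u\|_q^q\le\lambda\int_\Omega V|u|^p\,dx$ and $[u]_{s,p}^p\le\lambda\int_\Omega V|u|^p\,dx$, and interpolating $\|u\|_{pr'}\le\|u\|_q^{1-\sigma}\|u\|_{p_s^*}^{\sigma}$ together with Sobolev, one first gets $[u]_{s,p}\le K\|u\|_q$ on $\mathcal{N}_\lambda$ and then $\|u\|_q^{p-q}\ge c>0$, so $m_\lambda\ge\mu\bigl(\tfrac1q-\tfrac1p\bigr)c^{q/(p-q)}>0$ directly from \eqref{Npositive}. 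This would be a legitimate (and arguably cleaner) alternative to the paper, which instead proves $(ii)$ by rerunning the normalization argument of $(i)$: if $m_\lambda=0$ then $[u_n]_{s,q}^q+\|u_n\|_q^q\to0$, boundedness from $(i)$ plus compactness gives $\|u_n\|_{V,p}\to0$, and dividing the Nehari identity by $\|u_n\|_{V,p}^p$ the homogeneity mismatch $q<p$ forces $\omega_n=u_n/\|u_n\|_{V,p}$ to vanish while $\|\omega_n\|_{V,p}=1$, a contradiction. As written, however, both of your parts stop short of a valid argument.
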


\begin{proof}
$(i)$  We argue by contradiction. So, suppose that $(u_n)\subset \mathcal{W}^s$ is a minimizing sequence of \eqref{mlambda} such that
$$
[u_n]_{s}\to +\infty.
$$
Without loss of generality we can consider $$[u_n]_{s,p}\to +\infty.$$
We have
\begin{equation}\label{bound2}
  [u_n]_{s, p}^{p}+\mu[u_n]_{s, q}^{q}+\int_{\Omega} \left(|u_n|^{p}+\mu|u_n|^{q}\right) d x=\lambda \int_{\Omega}V |u_n|^{p} d x,
\end{equation}
and
\begin{equation}\label{bound1}
  [u_n]_{s, p}^{p}\leq \lambda\|u_n\|_{V,p}^p\quad\Rightarrow\quad \|u_n\|_{V,p}\to+\infty.
\end{equation}

We set $\omega_n=\frac{u_n}{\|u_n\|_{V,p}}$ for all $n\in\mathbb{N}$, hence $\|\omega_n\|_{V,p}=1$. Also, from \eqref{bound1} we have
$$
[\omega_n]_{s, p}^{p}\leq\lambda\quad\text{for all}\ n\in\mathbb{N},
$$
so $(\omega_n)\subset \mathcal{W}^s$ is bounded and we may assume that
$$
\omega_n\rightharpoonup \omega\ \text{in}\ \mathcal{W}^s\quad\text{and}\quad\omega_n\to\omega\ \text{in}\ L^p(V,\Omega).
$$
Recall that $(u_n)\subset \mathcal{W}^s$ is a minimizing sequence for \eqref{mlambda}. So, we have
\begin{equation}\label{bound3}
  \left([u_n]_{s, q}^{q}+\|u_n\|_q^q\right)\to \frac{1}{\mu}\left(\frac{1}{q}-\frac{1}{p}\right)^{-1}m_\lambda\ \text{as}\ n\to\infty.
\end{equation}
We multiply \eqref{bound2} with $1/\|u_n\|_{V,p}$, we obtain
\begin{equation}\label{bound4}
  \mu\left([\omega_n]_{s, q}^{q}+\|\omega_n\|_q^{q}\right)=\lambda-[\omega_n]_{s, p}^{p}-\|\omega_n\|_p^{p}.
\end{equation}
Using \eqref{bound3} in \eqref{bound4}, we can infer that $\omega=0$. On the other hand, since $\|\omega_n\|_{V,p}=1$ for each $n$ we get that $\|\omega\|_{V,p}=1$ which is a contradiction with $\omega=0$. Therefore we can conclude that every minimizing sequence of \eqref{mlambda} is bounded in $\mathcal{W}^s$.

$(ii)$ We already observed that $m_\lambda\geq0$. Suppose by contradiction that $m_\lambda= 0$. Let $(u_n)\subset \mathcal{W}^s$ is a minimizing sequence of \eqref{mlambda}. By similar arguments as the previous proof we show in one hand that $\omega_n\to 0$ in $L^p(V,\Omega)$ and on other hand $\omega_n=\frac{u_n}{\|u_n\|^p_{V,p}}\to 1$ which gives a contradiction and we have the result.

\end{proof}

\begin{lem}
  There exists $u\in \mathcal{N}_{\lambda}$ such that $\mathcal{J}(u)=m_\lambda$.
\end{lem}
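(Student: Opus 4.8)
The plan is to show that the infimum $m_\lambda$ defined in \eqref{mlambda} is attained by combining the boundedness of minimizing sequences (Lemma \ref{lemexis1}(i)) with the compactness of the embedding $\mathcal{W}^s \hookrightarrow L^p(\Omega,V)$ and the $(S)$-property of the operator $\mathcal{J}'$, together with weak lower semicontinuity of $\mathcal{J}$. First I would take a minimizing sequence $(u_n) \subset \mathcal{N}_\lambda$ with $\mathcal{J}(u_n) \to m_\lambda$. By Lemma \ref{lemexis1}(i) it is bounded in $\mathcal{W}^s$, hence (up to a subsequence) $u_n \rightharpoonup u$ weakly in $\mathcal{W}^s$ and, by the compact embedding into $L^p(\Omega,V)$ (and into $L^p(\Omega)$, $L^q(\Omega)$), $u_n \to u$ strongly in $L^p(\Omega,V)$, in $L^p(\Omega)$ and in $L^q(\Omega)$.

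Next I would verify that the weak limit $u$ is nontrivial and lies in $\mathcal{N}_\lambda$. Since each $u_n \in \mathcal{N}_\lambda$, we have $[u_n]_{s,p}^p + \mu[u_n]_{s,q}^q + \|u_n\|_p^p + \mu\|u_n\|_q^q = \lambda\int_\Omega V|u_n|^p\,dx$; from the expression \eqref{Npositive}, $\mathcal{J}(u_n) = \mu(\tfrac1q - \tfrac1p)([u_n]_{s,q}^q + \|u_n\|_q^q)$, so $m_\lambda > 0$ (Lemma \ref{lemexis1}(ii)) forces $\liminf_n ([u_n]_{s,q}^q + \|u_n\|_q^q) > 0$, and since $\|u_n\|_q^q \to \|u\|_q^q$ while $[u_n]_{s,q}^q$ is bounded, together with $\|u_n\|_{V,p}$ bounded away from $0$ (which follows from $[u_n]_{s,p}^p \le \lambda\|u_n\|_{V,p}^p$ and, say, $\mathcal{J}(u_n)$ bounded, or directly from the Nehari relation as in Lemma \ref{lemexis1}), one gets $\|u\|_{V,p} > 0$, so $u \neq 0$. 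Then I would show $u_n \to u$ strongly in $\mathcal{W}^s$: testing $\mathcal{J}'(u_n)$ against $u_n - u$, the right-hand side terms involving $V|u_n|^{p-2}u_n(u_n-u)$, $|u_n|^{p-2}u_n(u_n-u)$ and $|u_n|^{q-2}u_n(u_n-u)$ all vanish by strong $L^p(\Omega,V)$, $L^p$ and $L^q$ convergence and H\"older, so $\langle \mathcal{J}_s'(u_n), u_n - u\rangle \to 0$; by the $(S)$-property of the fractional $p$-Laplacian part (and monotonicity of the $q$-part), $u_n \to u$ strongly in $W^{s,p}_0(\Omega)$ and similarly in $W^{s,q}_0(\Omega)$, hence in $\mathcal{W}^s$. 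Strong convergence then gives $[u]_{s,p}^p + \mu[u]_{s,q}^q + \|u\|_p^p + \mu\|u\|_q^q = \lambda\int_\Omega V|u|^p\,dx$, i.e. $u \in \mathcal{N}_\lambda$, and $\mathcal{J}(u) = \lim_n \mathcal{J}(u_n) = m_\lambda$.

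Alternatively, and perhaps more cleanly, one can avoid proving full strong convergence: by weak lower semicontinuity of the Gagliardo seminorms and strong $L^q$ convergence, $\mathcal{J}(u) \le \liminf_n \mathcal{J}(u_n) = m_\lambda$; it then remains only to check $u \in \mathcal{N}_\lambda$ or, if $u$ merely satisfies $\langle \mathcal{J}'(u), u\rangle \le 0$, to scale by some $t_0 \in (0,1]$ so that $t_0 u \in \mathcal{N}_\lambda$, using that $\mathcal{J}(t_0 u) = \mu(\tfrac1q-\tfrac1p)(t_0^q)([u]_{s,q}^q + \|u\|_q^q) \le \mathcal{J}(u) \le m_\lambda$, forcing $t_0 = 1$ and $\mathcal{J}(u) = m_\lambda$.

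The main obstacle is the non-homogeneity: the two operators scale with different powers $p$ and $q$, so the usual fibering/Nehari projection arguments are delicate, and one must be careful that the weak limit does not escape the Nehari manifold or collapse to zero. The key leverage points are that $\mathcal{J}$ restricted to $\mathcal{N}_\lambda$ equals the single-signed quantity in \eqref{Npositive}, which is weakly lower semicontinuous after accounting for the strongly-convergent $L^q$ term, and that $m_\lambda > 0$ prevents concentration-to-zero; the compact embedding into $L^p(\Omega,V)$ handles the constraint $\|u\|_{V,p} = $ const in the limit.
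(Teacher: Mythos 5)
Both of your routes, as written, contain a genuine flaw. In the first route you treat the minimizing sequence $(u_n)\subset\mathcal{N}_\lambda$ as if each $u_n$ solved the equation: the only information available is the single scalar identity $\langle\mathcal{J}'(u_n),u_n\rangle=0$ defining the Nehari manifold, not $\mathcal{J}'(u_n)=0$ (nor $\mathcal{J}'(u_n)\to0$). Hence "testing $\mathcal{J}'(u_n)$ against $u_n-u$" and cancelling the terms $V|u_n|^{p-2}u_n(u_n-u)$, etc., is unjustified, and the conclusion $\langle\mathcal{J}'_s(u_n),u_n-u\rangle\to0$ does not follow; to argue this way you would need Ekeland's variational principle on $\mathcal{N}_\lambda$ together with control of the Lagrange multiplier for the constraint $G(u)=\langle\mathcal{J}'(u),u\rangle$, none of which you set up. So the $(S)$-property/strong convergence step is a gap.

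Your second route is essentially the paper's argument (weak limit $u$, nontriviality via $\|u_n\|_{V,p}$ bounded away from zero — that part is fine — then $\langle\mathcal{J}'(u),u\rangle\le0$ and projection $t_0u\in\mathcal{N}_\lambda$ with $t_0\in(0,1]$), but the decisive inequality is stated backwards. If $\langle\mathcal{J}'(u),u\rangle<0$, the coefficient of $t^p$ in the fiber map $t\mapsto\mathcal{J}(tu)$ is negative, so this map is increasing up to the Nehari time $t_0<1$ and decreasing afterwards; consequently $\mathcal{J}(t_0u)\ge\mathcal{J}(u)$, and your chain $\mathcal{J}(t_0u)\le\mathcal{J}(u)\le m_\lambda$ is false and cannot force $t_0=1$. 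The contradiction must instead be obtained as in the paper: from $t_0u\in\mathcal{N}_\lambda$ and the definition \eqref{mlambda}, $m_\lambda\le\mathcal{J}(t_0u)=\mu\bigl(\tfrac1q-\tfrac1p\bigr)t_0^q\bigl([u]_{s,q}^q+\|u\|_q^q\bigr)<\mu\bigl(\tfrac1q-\tfrac1p\bigr)\bigl([u]_{s,q}^q+\|u\|_q^q\bigr)\le\liminf_n\mu\bigl(\tfrac1q-\tfrac1p\bigr)\bigl([u_n]_{s,q}^q+\|u_n\|_q^q\bigr)=m_\lambda$, i.e.\ one compares with the weak lower semicontinuity of the single-signed quantity in \eqref{Npositive} along the sequence, not with $\mathcal{J}(u)\le m_\lambda$. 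With that repair (and noting $u\neq0$ guarantees the fiber map is positive for small $t$, so $t_0$ exists), your second route coincides with the paper's proof.
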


\begin{proof}
Let $\left\{u_{n}\right\}_{n \geqslant 1} \subseteq \mathcal{N}_{\lambda}$ be such that $\mathcal{J}\left(u_{n}\right) \rightarrow m_{\lambda}$. According to Lemma \ref{lemexis1}, $\left\{u_{n}\right\}_{n \geqslant 1} \subseteq \mathcal{W}^s$ is bounded. So, we may assume that

\begin{equation}\label{lemex1}
  u_{n} \rightharpoonup \hat{u}_{\lambda} \quad \text { in } \mathcal{W}^s \quad \text { and } \quad u_{n} \rightarrow \hat{u}_{\lambda} \quad \text { in } L^{p}(V,\Omega).
\end{equation}

Since $u_{n} \in \mathcal{N}_{\lambda}$ for all $n \in \mathbb{N}$, we have

\begin{equation}\label{bound2ex}
  [u_n]_{s, p}^{p}+\mu[u_n]_{s, q}^{q}+\int_{\Omega} \left(|u_n|^{p}+\mu|u_n|^{q}\right) d x=\lambda \int_{\Omega}V |u_n|^{p} d x,
\end{equation}

Passing to the limit as $n \rightarrow \infty$ and using \eqref{lemex1} and the weak lower semicontinuity of the norm functional in a Banach space, we obtain
\begin{equation}\label{lemex2}
  \mu\left([\hat{u}_{\lambda}]_{s, q}^{q}+\|\hat{u}_{\lambda}\|_q^{q}\right) d x\leq\lambda\|\hat{u}_{\lambda}\|_{V,p}-[\hat{u}_{\lambda}]_{s, p}^{p}+\|\hat{u}_{\lambda}\|_p^{p}.
\end{equation}
Note that $\lambda\|\hat{u}_{\lambda}\|_{V,p}-[\hat{u}_{\lambda}]_{s, p}^{p}+\|\hat{u}_{\lambda}\|_p^{p} \neq 0$, or otherwise from \eqref{bound4}, we have
$$[u_n]_{s, q}^{q}+\|u_n\|_{q}^{q} \rightarrow 0, $$
so $u_{n} \rightarrow 0$ in $\mathcal{W}^s.$ Recall that
$$
 \mathcal{J}(u_n)=\mu\left(\frac{1}{q}-\frac{1}{p}\right)\left([u_n]_{s, q}^{q}+\|u_n\|_q^q\right) \quad \text { for all } n \in \mathbb{N}.
$$
So, it follows that
$$
 \mathcal{J}(u_n) \rightarrow 0 \quad \text { as } n \rightarrow \infty \quad \text{and}\quad m_{\lambda}=0,
$$
which contradicts Lemma \ref{lemexis1}. Therefore
$$
\lambda\|\hat{u}_{\lambda}\|_{V,p}-[\hat{u}_{\lambda}]_{s, p}^{p}+\|\hat{u}_{\lambda}\|_p^{p} \neq 0\quad \text{and}\quad\hat{u}_{\lambda} \neq 0.
$$
Also, exploiting the sequential weak lower semicontinuity of $\mathcal{J}$, we have
$$
\mathcal{J}\left(\hat{u}_{\lambda}\right) \leqslant \lim _{n \rightarrow \infty} \mathcal{J}\left(u_{n}\right)=m_{\lambda}
$$
If we show that $\hat{u}_{\lambda} \in \mathcal{N}_{\lambda}$, then $\mathcal{J}\left(\hat{u}_{\lambda}\right)=m_{\lambda}$, and this will conclude the proof. To this end, let
$$
\hat{\xi}_{\lambda}(t)=\left\langle\mathcal{J}^{\prime}\left(t \hat{u}_{\lambda}, t \hat{u}_{\lambda}\right)\right\rangle \quad \text { for all } t \in[0,1].
$$
Evidently, $\hat{\xi}_{\lambda}(\cdot)$ is a continuous function. Arguing by contradiction, suppose that $\hat{u}_{\lambda} \notin \mathcal{N}_{\lambda}$. Then since $u_{n} \in \mathcal{N}_{\lambda}$ for all $n \in \mathbb{N}$, we can infer from \eqref{lemex2} that
\begin{equation}\label{lemex3}
  \hat{\xi}_{\lambda}(1)<0.
\end{equation}

On the other hand, note that since $\lambda>\lambda_{1}(p,V)$ and from \eqref{xi}, we have
$$
\hat{\xi}_{\lambda}(t) \geqslant c_{1} t^{q}-c_{2} t^{p} \quad \text { for some } c_{1}, c_{2}>0.
$$
So, since $q<p$, we have
\begin{equation}\label{lemex4}
  \hat{\xi}_{\lambda}(t)>0\quad\text{ for all}\quad t \in(0, \epsilon)\quad\text{ with small}\ \epsilon \in(0,1).
\end{equation}

By \eqref{lemex3} and \eqref{lemex4}, we see that there exists $t^{*} \in(0,1)$ such that
$$
 \hat{\xi}_{\lambda}\left(t^{*} \hat{u}_{\lambda}\right)=0 \quad \text{and}\quad t^{*} \hat{u}_{\lambda} \in \mathcal{N}_{\lambda}.
$$
Then using \eqref{Npositive}, we have
$$
\begin{aligned}
m_{\lambda} \leqslant \varphi_{\lambda}\left(t^{*} \hat{u}_{\lambda}\right) &=\alpha\left(\frac{1}{q}-\frac{1}{p}\right)\left(t^{*}\right)^{q}[\hat{u}_{\lambda}]_{s, q}^{q}+\|\hat{u}_{\lambda}\|_q^{q} \\
&<\alpha\left(\frac{1}{q}-\frac{1}{p}\right)[\hat{u}_{\lambda}]_{s, q}^{q}+\|\hat{u}_{\lambda}\|_q^{q} \quad \left(\text {since } t^{*} \in(0,1)\right) \\
&\leqslant \alpha\left(\frac{1}{q}-\frac{1}{p}\right) \liminf _{n \rightarrow \infty}\left([u_n]_{s, q}^{q}+\|u_n\|_q^{q}\right)\\
&=m_{\lambda}
\end{aligned}
$$
a contradiction. Therefore $\hat{u}_{\lambda} \in \mathcal{N}_{\lambda}$, and this finishes the proof.
\end{proof}

\section{Proofs of Stability results}

First, we give a convergence principle of solutions of a more general type equation with this form

\begin{equation}\label{cvps-equation}
   \left\{\begin{aligned}
(- \Delta)_p^{s}\, u + \mu(- \Delta)_q^{s}\, u+ |u|^{p-2}u+\mu|u|^{q-2}u=f(x,u)\quad & \text{in } \Omega\\
u=0\quad& \text{in}\quad\R^N\backslash\Omega,
\end{aligned}\right.
\end{equation}

Here,  we need to impose some conditions on $f$ in order to guarantee that weak solutions are well defined:
\begin{enumerate}
  \item [($f_1$)] $ f: \Omega \times \mathbb{R} \rightarrow \mathbb{R}$ is a Carath\'eodory function, i.e. $f(\cdot, z)$ is measurable for any $z \in \mathbb{R}$ and $f(x, \cdot)$ is continuous a.e. $x \in \Omega$.
  \item [($f_2$)] There exist a constant $C>0$ such that $|f(x, z)| \leq C(1+|z|)^{r-1}$ for some $r \in\left[1, p_s^{*}\right)$.
\end{enumerate}
We will prove that any accumulation point of the sequence of solutions is in fact a solution to the local limit problem

\begin{equation}\label{cvp-equation}
   \left\{\begin{aligned}
(- \Delta)_p\, u + \mu(- \Delta)_q\, u+ |u|^{p-2}u+\mu|u|^{q-2}u=f(x,u)\quad & \text{in } \Omega\\
u=0\quad& \text{in}\quad\partial\Omega.
\end{aligned}\right.
\end{equation}

\begin{thm}\label{cvp}
  Let $0<s_{k} \rightarrow 1$ and let $u_{k} \in \mathcal{W}^{s_k}$ be a sequence of solutions of \eqref{cvps-equation} such that $\sup _{k \in \mathbb{N}}\left[u_{k}\right]_{s_{k}}<\infty .$ Then, any accumulation point $u$ of the sequence $\left\{u_{k}\right\}_{k \in \mathbb{N}}$ in $L^{p}(\Omega)$ verifies that $u \in W_{0}^{1, p}(\Omega)$ and it is a weak solution of \eqref{cvp-equation}.
\end{thm}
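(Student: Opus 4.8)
The plan is to pass to the limit in the weak formulation of \eqref{cvps-equation} using the compactness built into the hypothesis $\sup_k [u_k]_{s_k} < \infty$ together with Lemma \ref{cvps} (the $(S)$-type convergence of the fractional $p\&q$-Laplacian). First I would record the weak formulation satisfied by $u_k$: for every $v \in \mathcal{W}^{s_k}$,
\[
\langle \mathcal{J}'_{s_k}(u_k), v\rangle + \int_\Omega \big(|u_k|^{p-2}u_k + \mu |u_k|^{q-2}u_k\big) v\, dx = \int_\Omega f(x,u_k) v\, dx .
\]
Since $\sup_k [u_k]_{s_k,p}^p + [u_k]_{s_k,q}^q < \infty$, the Bourgain--Brezis--Mironescu estimates (cited in Section 2, cf. Lemma \ref{cvpsprel}) imply that the accumulation point $u$ lies in $W_0^{1,p}(\Omega)$ (and in $W_0^{1,q}(\Omega)$, hence in $W_0^{1,p}(\Omega)$ since $q<p$ and $\Omega$ is bounded), with $\|\nabla u\|_p^p \le \liminf_k [u_k]_{s_k,p}^p$. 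Passing to a subsequence, I may assume $u_k \to u$ strongly in $L^p(\Omega)$ and a.e.\ in $\Omega$; growth condition $(f_2)$ and the generalized dominated convergence theorem then give $f(\cdot, u_k) \to f(\cdot, u)$ in $L^{p'}(\Omega)$ (using $r < p_s^*$ and the compact Sobolev embeddings), so the right-hand side converges for any fixed smooth test function.

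The main obstacle is the left-hand side: the test space $\mathcal{W}^{s_k}$ changes with $k$, and the operator $\mathcal{J}'_{s_k}$ is nonlocal, so I cannot simply insert a fixed $v$ and pass to the limit termwise. The strategy is to fix $v \in C_c^\infty(\Omega) \subset \mathcal{W}^{s_k}$ for all $k$, and invoke Lemma \ref{cvps} \emph{with the roles reversed}: that lemma handles $\langle \mathcal{J}'_{s_k}(w), v_k\rangle$ with a \emph{fixed} first argument in $W_0^{1,p}$; to get convergence of $\langle \mathcal{J}'_{s_k}(u_k), v\rangle$ I would exploit the monotonicity inequality $\langle \mathcal{J}'_{s_k}(u_k) - \mathcal{J}'_{s_k}(v), u_k - v\rangle \ge 0$ recorded in Section 2.4, combined with a Minty--Browder argument. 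Concretely: test the equation for $u_k$ against $u_k$ itself to control $\langle \mathcal{J}'_{s_k}(u_k), u_k\rangle$, then write
\[
\langle \mathcal{J}'_{s_k}(u_k), u_k - v\rangle = \int_\Omega f(x,u_k)(u_k - v)\,dx - \int_\Omega \big(|u_k|^{p-2}u_k + \mu|u_k|^{q-2}u_k\big)(u_k - v)\,dx,
\]
whose right-hand side converges (strong $L^p$ convergence of $u_k$ and $L^{p'}$ convergence of the lower-order terms and of $f(\cdot,u_k)$). Using $0 \le \langle \mathcal{J}'_{s_k}(u_k) - \mathcal{J}'_{s_k}(v), u_k - v\rangle$ and Lemma \ref{cvps} applied to $\langle \mathcal{J}'_{s_k}(v), u_k - v\rangle \to \langle \mathcal{J}'_1(v), u - v\rangle$, I obtain in the limit
\[
\limsup_k \langle \mathcal{J}'_{s_k}(u_k), u_k - v\rangle \ \text{is bounded below by}\ \langle \mathcal{J}'_1(v), u - v\rangle \ \text{for all } v \in C_c^\infty(\Omega),
\]
and a matching upper bound from the equation, giving
\[
\int_\Omega f(x,u)(u-v)\,dx - \int_\Omega \big(|u|^{p-2}u + \mu|u|^{q-2}u\big)(u-v)\,dx \ \ge \ \langle \mathcal{J}'_1(v), u - v\rangle
\]
for every $v \in C_c^\infty(\Omega)$.

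Finally, I would run the standard Minty trick: replace $v$ by $u - t\phi$ for $t > 0$ and $\phi \in C_c^\infty(\Omega)$, divide by $t$, let $t \to 0^+$ using hemicontinuity of $\mathcal{J}'_1$ (which follows from continuity of $s=1$ Fréchet derivative stated in Section 2.2), to conclude
\[
\langle \mathcal{J}'_1(u), \phi\rangle + \int_\Omega \big(|u|^{p-2}u + \mu|u|^{q-2}u\big)\phi\,dx = \int_\Omega f(x,u)\phi\,dx
\]
for all $\phi \in C_c^\infty(\Omega)$, and then extend to all $\phi \in W_0^{1,p}(\Omega) \cap W_0^{1,q}(\Omega)$ by density, which is exactly the statement that $u$ is a weak solution of \eqref{cvp-equation}. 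The delicate points to be careful about are: justifying that the accumulation point genuinely lies in $W_0^{1,q}(\Omega)$ as well (needed for the $q$-term, again via BBM applied to the $q$-seminorm), and checking that the boundary condition $u = 0$ on $\partial\Omega$ is inherited — this follows since each $u_k$ vanishes outside $\Omega$ and the limit in $W_0^{1,p}(\Omega)$ preserves the zero trace.
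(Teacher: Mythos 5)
Your proposal follows essentially the same route as the paper: BBM compactness to place the limit $u$ in $W_0^{1,p}(\Omega)$, strong $L^p$ convergence to pass to the limit in the source and lower-order terms, and a Minty--Browder monotonicity argument built on Lemma \ref{cvps}, concluded by the hemicontinuity step $v=u-t\phi$, $t\to 0^+$. The only cosmetic difference is that the paper first extracts a weak limit $\eta$ of $(-\Delta_p)^{s_k}u_k+\mu(-\Delta_q)^{s_k}u_k$ in $W^{-1,p'}(\Omega)$ and then identifies it, whereas you work directly with the limiting variational inequality; this changes nothing essential.
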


\begin{proof}
 Assume that $u_{k} \rightarrow u$ in $L^{p}(\Omega)$. Then, since $\left\{u_{k}\right\}_{k \in \mathbb{N}}$ is uniformly bounded in $\mathcal{W}^{s_k}$, by \cite{Brezis} we obtain that $u \in W_{0}^{1, p}(\Omega)$. Passing to a subsequence, if necessary, we can also assume that $u_{k} \rightarrow u$ a.e. in $\Omega$.

On the other hand, if we define
$$
\eta_{1,k}:=\left(-\Delta_{p}\right)^{s_{k}} u_{k} \in \left(\mathcal{W}^{s_k}\right)^{\prime}\subset W^{-1, p^{\prime}}(\Omega)
$$
and
$$
\eta_{2,k}:=\left(-\Delta_{q}\right)^{s_{k}} u_{k} \in W^{-s_{k}, q^{\prime}}(\Omega) \subset W^{-1, p^{\prime}}(\Omega)\subset W^{-1, q^{\prime}}(\Omega),
$$
 then $\left\{\eta_{1,k}\right\}_{k \in \mathbb{N}}$ and $\left\{\eta_{2,k}\right\}_{k \in \mathbb{N}}$ are bounded in $W^{-1, p^{\prime}}(\Omega)$ and $W^{-1, q^{\prime}}(\Omega)$ respectively, and hence, up to a subsequence, there exists $\eta \in W^{-1, p^{\prime}}(\Omega)$ such that
  $$
  \eta_{k}:=\eta_{1,k}+\eta_{2,k} \rightarrow \eta\quad\text{ weakly in}\quad W^{-1, p^{\prime}}(\Omega).
  $$
Since $u_{k}$ solves \eqref{cvps-equation}, for any $v \in C_{c}^{\infty}(\Omega)$
$$
0=\left\langle\left(-\Delta_{p}\right)^{s_{k}} u_{k}+\mu\left(-\Delta_{q}\right)^{s_{k}} u_{k}, v\right\rangle-\int_{\Omega} f\left(x, u_{k}\right) v d x+\int_{\Omega}(|u_{k}|^{p-2}u_{k}+\mu|u_{k}|^{q-2}u_{k})v d x.
$$
Using the convergence, taking the limit $k \rightarrow \infty$ we get
$$
0=\langle\eta, v\rangle-\int_{\Omega} f(x, u) v d x+\int_{\Omega}(|u|^{p-2}u+\mu|u|^{q-2}u)v d x.
$$

We want to identify $\eta$, more precisely, we will prove that
\begin{equation}\label{ineq5.1}
  \langle\eta, v\rangle=\left\langle-\Delta_{p} u-\mu\Delta_{p} u, v\right\rangle.
\end{equation}

For that purpose we use the monotonicity of the operator and the fact that $u_{k}$ is solution of \eqref{cvps-equation}, Indeed,
$$
\begin{aligned}
0 & \leq\left\langle\left(-\Delta_{p}\right)^{s_{k}} u_{k}+\mu\left(-\Delta_{q}\right)^{s_{k}} u_{k}, u_{k}-v\right\rangle-\left\langle\left(-\Delta_{p}\right)^{s_{k}} v+\mu \left(-\Delta_{q}\right)^{s_{k}} v, u_{k}-v\right\rangle \\
&=\int_{\Omega} f\left(x, u_{k}\right)\left(u_{k}-v\right) d x-\left\langle\left(-\Delta_{p}\right)^{s_{k}} v+\mu \left(-\Delta_{q}\right)^{s_{k}} v, u_{k}-v\right\rangle.
\end{aligned}
$$
Hence taking the limit $k \rightarrow \infty$ and using Lemma \ref{cvps} one finds that
$$
\begin{aligned}
0 & \leq \int_{\Omega} f(x, u)(u-v)-\left\langle-\Delta_{p} v-\mu\Delta_{q} v, u-v\right\rangle \\
&=\langle\eta, u-v\rangle-\left\langle-\Delta_{p} v-\Delta_{q} v, u-v\right\rangle.
\end{aligned}
$$
Consequently, if we take $v=u-t w, w \in W_{0}^{1, p}(\Omega)$ given and $t>0$, we obtain that
$$
0 \leq\langle\eta, w\rangle-\left\langle-\Delta_{p}(u-t w)-\mu\Delta_{q}(u-t w), w\right\rangle
$$
taking $t \rightarrow 0^{+}$ gives that
$$
0 \leq\langle\eta, w\rangle-\left\langle-\Delta_{p}(u)-\mu\Delta_{q}(u), w\right\rangle.
$$
Thus \eqref{ineq5.1} deduced and this completes the proof.
\end{proof}

Focusing on the fractional parameter we can redefine our equation \eqref{mainequation} as follows

\begin{equation}\label{mainequationlambdas}
  \left\{\begin{aligned}
(- \Delta)_p^{s}\, u + \mu(- \Delta)_q^{s}\, u+ |u|^{p-2}u+\mu|u|^{q-2}u=\lambda^s\ V(x)|u|^{p-2}u\quad & \text{in } \Omega\\
u=0\quad& \text{in}\quad\R^N\backslash\Omega,
\end{aligned}\right.
\end{equation}
and its local counterpart
\begin{equation}\label{mainequationlambda}
 \left\{\begin{aligned}
(- \Delta)_p\, u + \mu(- \Delta)_q\, u+ |u|^{p-2}u+\mu|u|^{q-2}u=\lambda^1\ V(x)|u|^{p-2}u\quad & \text{in } \Omega\\
u=0\quad& \text{in}\quad\R^N\backslash\Omega.
\end{aligned}\right.
\end{equation}

\begin{thm}\label{thm5.2}
Let $\lambda^{s} \in \sigma_{p,q}^{s}$ be such that $\sup _{s \in(0,1)} \lambda^{s}<\infty .$ Then any accumulation point of the set $\{\lambda^s\}_{s\in (0,1)}$ belongs to $\sigma^1_{p,q}.$
 Moreover, if $\left\{s_{k}\right\}_{k \in \mathbb{N}}$ is such that $s_{k} \rightarrow 1$ and $\lambda^{s_{k}} \rightarrow \lambda$ and $u_{k} \in \mathcal{W}^{s_k}$ is an eigenfunction  of \eqref{mainequationlambdas} associated to $\lambda^{s_{k}}$, then, up to a further subsequence, there exists $u \in W_{0}^{1, p}(\Omega)$ such
that $u_{k} \rightarrow$ u strongly in $L^{p}(\Omega)$ and $u$ is an eigenfunction of \eqref{mainequationlambda} associated to $\lambda$.
\end{thm}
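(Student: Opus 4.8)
The plan is to combine two ingredients already available in the excerpt: the convergence principle for the general equation \eqref{cvps-equation} (Theorem \ref{cvp}) and the compactness afforded by the uniform bound on $[u_k]_{s_k}$. Let $\lambda$ be an accumulation point of $\{\lambda^s\}_{s\in(0,1)}$, so there exists $s_k\to 1$ with $\lambda^{s_k}\to\lambda$, and let $u_k\in\mathcal{W}^{s_k}$ be an eigenfunction of \eqref{mainequationlambdas} associated to $\lambda^{s_k}$. First I would normalize the eigenfunctions, say $\|u_k\|_{p,V}=1$ (recall that since $\lambda^{s_k}$ is an eigenvalue of \eqref{mainequation} with $V\geq 0$ on a set of positive measure, the corresponding eigenfunctions satisfy $\int_\Omega V|u_k|^p\,dx>0$, so this normalization is legitimate after scaling). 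Testing the equation against $u_k$ itself gives
\begin{equation*}
[u_k]_{s_k,p}^p+\mu[u_k]_{s_k,q}^q+\|u_k\|_p^p+\mu\|u_k\|_q^q=\lambda^{s_k}\int_\Omega V|u_k|^p\,dx=\lambda^{s_k},
\end{equation*}
and since $\sup_k\lambda^{s_k}<\infty$ this yields $\sup_k[u_k]_{s_k}<\infty$, which is exactly the hypothesis needed to invoke the machinery of Section 5.

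Next I would extract convergence. By the uniform bound on $[u_k]_{s_k,p}^p$ and a fractional Sobolev-type compactness argument (the precise statement is the one used in \cite{Brezis, BBM}: a sequence with $\sup_k(1-s_k)[u_k]_{s_k,p}^p<\infty$ is precompact in $L^p(\Omega)$, and any limit lies in $W_0^{1,p}(\Omega)$), there is a subsequence, still denoted $u_k$, and $u\in W_0^{1,p}(\Omega)$ with $u_k\to u$ strongly in $L^p(\Omega)$; passing to a further subsequence we may also assume $u_k\to u$ a.e.\ in $\Omega$. Now apply Theorem \ref{cvp} with the right-hand side $f(x,z)=\lambda^{s_k}V(x)|z|^{p-2}z$. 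There is a small wrinkle: the nonlinearity in Theorem \ref{cvp} is $k$-independent, whereas here it depends on $k$ through $\lambda^{s_k}$. Since $\lambda^{s_k}\to\lambda$ and $V\in L^\infty(\Omega)$, the functions $f_k(x,z):=\lambda^{s_k}V(x)|z|^{p-2}z$ satisfy $(f_1)$–$(f_2)$ uniformly in $k$ (with $r=p<p_s^*$), and $f_k(x,u_k)\to \lambda V(x)|u|^{p-2}u$ in, say, $L^{p'}(\Omega)$ by the a.e.\ convergence, the $L^p$ convergence and the uniform bound — this is enough to rerun the proof of Theorem \ref{cvp} verbatim, replacing $\int_\Omega f(x,u_k)v$ by $\int_\Omega f_k(x,u_k)v\to\int_\Omega \lambda V|u|^{p-2}uv$ at each occurrence. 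The monotonicity trick (testing with $u_k-v$, passing to the limit via Lemma \ref{cvps}, then letting $v=u-tw$ and $t\to 0^+$) then identifies the weak limit $\eta$ of $(-\Delta_p)^{s_k}u_k+\mu(-\Delta_q)^{s_k}u_k$ as $-\Delta_p u-\mu\Delta_q u$, so $u$ is a weak solution of \eqref{mainequationlambda}.

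It remains to check that $u$ is genuinely an eigenfunction, i.e.\ $u\not\equiv 0$; this is where I expect the only real obstacle, and it is precisely the reason the statement excludes $\lambda=\lambda_1(p,V)$. Suppose $u\equiv 0$. From the normalization $\|u_k\|_{p,V}=1$ and $u_k\to u$ strongly in $L^p(\Omega,V)$ (which follows from $L^p(\Omega)$ convergence since $V\in L^\infty$) we would get $\|u\|_{p,V}=1$, contradicting $u=0$ — so in fact $u\neq 0$ automatically and $u$ is an eigenfunction of \eqref{mainequationlambda} associated to $\lambda$, hence $\lambda\in\sigma^1_{p,q}$. \emph{However}, one must be careful that the strong $L^p(\Omega,V)$ convergence really does transfer the normalization: if $V$ degenerates this is immediate from $\|u_k-u\|_{p,V}^p\le\|V\|_\infty\|u_k-u\|_p^p\to0$, so there is no loss. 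The role of the exclusion $\lambda\neq\lambda_1(p,V)$ is then a consistency check with the structure theorem $\sigma^1_{p,q}(V)=(\lambda_1(p,V),+\infty)$ (resp.\ together with the negative branch): the limit $\lambda$ obtained this way is an eigenvalue of the limit problem, and since $\sigma^1_{p,q}$ is an open interval not containing its endpoint, stating the conclusion for $\lambda\neq\lambda_1(p,V)$ is the sharp formulation. I would therefore organize the write-up as: (1) normalization and uniform bound via testing; (2) compactness in $L^p$ and identification of the limit in $W_0^{1,p}$; (3) the $k$-dependent version of Theorem \ref{cvp} to pass to the limit in the equation; (4) non-triviality of $u$ from the preserved normalization; concluding that $\lambda\in\sigma^1_{p,q}$ and $u_k\to u$ in $L^p(\Omega)$ with $u$ an eigenfunction of \eqref{mainequationlambda}.
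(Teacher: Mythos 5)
Your steps (2) and (3) — Bourgain--Brezis--Mironescu compactness to get $u_k\to u$ in $L^p(\Omega)$ with $u\in W_0^{1,p}(\Omega)$, and the $k$-dependent use of Theorem \ref{cvp} with $f_k(x,z)=\lambda^{s_k}V(x)|z|^{p-2}z\to\lambda V(x)|z|^{p-2}z$ — are exactly what the paper does. The gap is in steps (1) and (4), and it is a single but fatal move: the normalization $\|u_k\|_{p,V}=1$ ``after scaling'' is not legitimate, because problem \eqref{mainequationlambdas} is \emph{not} homogeneous. If $u$ solves it, then $tu$ satisfies an equation in which the $q$-terms carry the factor $\mu t^{q-p}$, i.e.\ a different problem (effectively a different $\mu$); this loss of scaling invariance is precisely the feature of the $p\&q$-operator emphasized throughout the paper (it is why the spectrum is a continuum in the first place). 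Since you cannot normalize, testing the equation with $u_k$ only yields $[u_k]_{s_k,p}^p+\mu[u_k]_{s_k,q}^q+\|u_k\|_p^p+\mu\|u_k\|_q^q=\lambda^{s_k}\|u_k\|_{p,V}^p\le \lambda^{s_k}\|V\|_\infty\|u_k\|_p^p$, which bounds nothing; and the non-triviality of $u$ no longer follows from a ``preserved normalization''. These are exactly the two points where the paper's proof does real work: the uniform bound \eqref{boundness} is obtained by exploiting $\lambda>\lambda_1(p,V)$ to build a fixed comparison element $e_0=t_0\omega\in\mathcal{N}_{\lambda_k}$, deducing $|\mathcal{J}(u_k)|\le \mathcal{J}(e_0)$ and then running an Ambrosetti--Rabinowitz-type estimate with $\theta>p>q$ on $\mathcal{J}(u_k)+\tfrac1\theta\langle\mathcal{J}'(u_k),u_k\rangle$; and $u\neq0$ is proved by contradiction, setting $\omega_k=u_k/\|u_k\|_{V,p}$ and observing that in the renormalized equation the $q$-terms are multiplied by $\mu\|u_k\|_{V,p}^{q-p}\to\infty$ while the right-hand side stays bounded — an argument that crucially uses the mismatch of homogeneities $p\neq q$.

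For the same reason, your closing remark that excluding $\lambda=\lambda_1(p,V)$ is ``only a consistency check'' is not right: the hypothesis $\lambda>\lambda_1(p,V)$ is used essentially, both to produce the comparison element $\omega$ (via \eqref{star}) that gives the uniform bound, and because at $\lambda=\lambda_1(p,V)$ the eigenfunctions may genuinely degenerate (the infimum $\lambda(p,q,\mu,V)=\lambda_1(p,V)$ is not attained for any $\mu>0$), so no nontrivial limit eigenfunction can be expected there. To repair your write-up you would need to replace the normalization by arguments of the type the paper uses (Nehari/energy comparison for boundedness, and a rescaling-plus-homogeneity-mismatch argument for $u\neq0$), after which your steps (2)--(3) go through as written.
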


\begin{proof}
Assume that $\lambda^{k}:=\lambda^{s_{k}} \rightarrow \lambda>\lambda_1(p,V)$ and let $u_{k}$ be the associated eigenfunction of \eqref{mainequationlambdas}. First, let us prove that
\begin{equation}\label{boundness}
  \left\|u_{k}\right\|_{s_{k}, p}^{p}\leq\|u_k\|_{s_k} \leq M,
\end{equation}
with $M$ independent on $k \in \mathbb{N}$ and $\|\cdot\|_s:=\|\cdot\|_{s,p}^p+\|\cdot\|_{s,q}^q.$

Since $\lambda^k \searrow\lambda>\lambda_{1}(p,V)$, then for some $s\in (0,1)$ and $0<\varepsilon<\lambda,$ there exists $\omega \in \mathcal{W}^s \backslash\{0\}$ (independent of $k$) for which
\begin{equation}\label{star}
  \|\omega\|_{s,p}^p<(\lambda-\varepsilon)\|\omega\|_{p,V}^p<\lambda^k\|\omega\|_{p,V}^p.
\end{equation}
Consider the function $\Phi\colon (0,+\infty)\to \R$ defined by
$$
\begin{aligned}
\Phi(t)=\mathcal{J}(t\omega)&=t^p\frac{\|\omega\|_{s,p}^p}{p}+t^q\frac{\|\omega\|_{s,q}^q}{q}-t^p \lambda^k\frac{\|\omega\|_{p,V}^p}{p}\\
&:=\alpha t^p+\beta t^q-\gamma_k t^p=(\alpha-\gamma_k)t^p+\beta t^q.
\end{aligned}
$$
Passing to the derivative:
$$
\Phi^\prime(t)=0\quad\Leftrightarrow\quad t_k=\left(\frac{\beta q}{p(\alpha-\gamma_k)}\right)^{\frac{1}{p-q}}.
$$
From \eqref{star}, we get that $t_k\to t_0$ for some $t_0>0$, then $\Phi^\prime(t_0)=0$ and
\begin{equation}\label{e0}
  e_0:=t_0\omega\in \mathcal{N}_{\lambda_k}\quad\text{for some}\ k_0,\ \text{and all}\ k>k_0.
\end{equation}
Note that for each $u_k \in \mathcal{N}_{\lambda_k}$ the functional $\mathcal{J}$ has the following expression
\begin{equation}\label{Npositivek}
  \mathcal{J}(u_k)=\mu\left(\frac{1}{q}-\frac{1}{p}\right)\|u_k\|_{s,q}^q\geq0.
\end{equation}
On the other hand, from \eqref{e0} we have $\mathcal{J}(u_k)=m_{\lambda_k}\leq \mathcal{J}(e_0).$ So
\begin{equation}\label{Jukbounded}
  |\mathcal{J}(u_k)|\leq c,\quad \text{for some}\ c>0.
\end{equation}

Let $\theta>p>q$. We can see that
$$
\begin{aligned}
  C_0(1+\|u_k\|_{s_k}) & \geq \mathcal{J}(u_k)+\frac{1}{\theta}\left\langle \mathcal{J}^{\prime}(u_k), u_k\right\rangle\\
  &= \left(\frac{1}{p}-\frac{1}{\theta}\right)\|u_k\|_{s,p}^p+\left(\frac{1}{q}-\frac{1}{\theta}\right)\|u_k\|_{s,q}^q
  -\left(\frac{1}{p}-\frac{1}{\theta}\right)\lambda^k\|u_k\|_{p,V}^p\\
  &\geq \left(\frac{1}{p}-\frac{1}{\theta}\right)\left(\|u_k\|_{s,p}^p+\|u_k\|_{s,q}^q\right)=:C_1\left(\|u_k\|_{s,p}^p+\|u_k\|_{s,q}^q\right).
\end{aligned}
$$
Now, assume by contradiction that $\|u_k\|_{s_k}\to\infty.$ Then from \eqref{Npositivek} and \eqref{Jukbounded}, we have $\|u_k\|_{s_k,p}^p\to \infty$ and $\|u_k\|_{s_k,q}^q$ is bounded. Therefore,
$$
C_0\left(\|u_k\|_{s_k,p}+\|u_k\|_{s_k,q}\right)= C_0(1+\|u_k\|_{s_k})\geq C_2 \|u_k\|_{s_k,p}^p,
$$
and thus
$$
C_0\left(\frac{1}{\|u_k\|_{s,p}^p}+\frac{1}{\|u_k\|_{s,p}^{p-1}}+\frac{1}{\|u_k\|_{s,p}^{p-q}}\right)\geq C_2.
$$
Since $p>q>1$ and passing to the limit as $k\to\infty$ we deduce that $C_2\leq 0$ which is a contradiction.

Therefore, \eqref{boundness} is proved and by \cite{Brezis}, there exists $u \in W_{0}^{1, p}(\Omega)$ such that $u_{k} \rightarrow u$ strongly in $L^{p}(\Omega)$.

 We claim that $u\neq0,$ otherwise, we have $\left\|u_{k}\right\|_{s_{k}, p}\to 0,$ and $\left\|u_{k}\right\|_{V,p}^{p}\to 0.$ By arguing as the proof of Lemma \ref{lemexis1} $(i)$, we get that
$$
\omega_n\rightharpoonup \omega\ \text{in}\ \mathcal{W}^s\quad\text{and}\quad\omega_n\to\omega\ \text{in}\ L^p(V,\Omega),
$$
with $\omega\neq0.$ Fix $\varphi\in C^{\infty}_0$ such that
$$
\left\langle\left(-\Delta\right)_{q}\omega,\varphi \right\rangle+\int_{\Omega}|\omega|^{q-2}\omega \varphi dx>0.
$$
Then, by \eqref{mainequationlambdas}, we have
$$
\left\langle\left(-\Delta_p\right)^{s_k}\omega_k,\varphi \right\rangle + \int_{\Omega} |\omega_k|^{p-2} \omega_k \varphi\, dx +\frac{\mu}{\left\|u_{k}\right\|_{V,p}^{p-q}}\left(\left\langle\left(-\Delta_q\right)^{s_k}\omega_k,\varphi \right\rangle+\int_{\Omega} |\omega_k|^{q-2} \omega_k \varphi\, d x\right)=\lambda^{s_k}.
$$
The left-hand side of the previous equality tends towards infinity on the other hand the term on the right is bounded, which is a contradiction, so we have $u\neq0.$

Now, since $f_{k}(z):=\lambda^{k}V |z|^{p-2} z \rightarrow f(z):=\lambda V|z|^{p-2} z$ uniformly on compact sets of $z \in \mathbb{R}$, from Theorem \ref{cvp} it follows that $u$ is an eigenfunction of \eqref{mainequationlambda} associated to $\lambda$ as we wanted to show.
\end{proof}

\end{document}